\numberwithin{equation}{section}
\setlist[enumerate]{label=$(\mathrm{\arabic*})$}
\newtheorem{thm}{Theorem}[section]
\newaliascnt{cor}{thm}
\newtheorem{cor}[cor]{Corollary}
\newaliascnt{lem}{thm}
\newtheorem{lem}[lem]{Lemma}
\newaliascnt{prop}{thm}
\newtheorem{prop}[prop]{Proposition}
\newaliascnt{conj}{thm}
\newtheorem{conj}[conj]{Conjecture}
\theoremstyle{definition}
\newaliascnt{dfn}{thm}
\newtheorem{dfn}[dfn]{Definition}
\newaliascnt{rem}{thm}
\newtheorem{rem}[rem]{Remark}
\newaliascnt{prob}{thm}
\newaliascnt{ex}{thm}
\newtheorem{ex}[ex]{Example}
\newcommand{\Q}{\mathbb{Q}}
\newcommand{\Qp}{\Q_p}
\newcommand{\R}{\mathbb{R}}
\newcommand{\C}{\mathbb{C}}
\newcommand{\Z}{\mathbb{Z}}
\newcommand{\Zhat}{\widehat{\Z}}
\newcommand{\bC}{\mathbb{C}}
\newcommand{\bF}{\mathbb{F}}
\newcommand{\sX}{\mathscr{X}}
\newcommand{\cO}{\mathcal{O}}
\newcommand{\cM}{\mathcal{M}}
\newcommand{\cN}{\mathcal{N}}
\newcommand{\fB}{\mathfrak{B}}
\DeclareMathOperator{\Br}{Br}
\DeclareMathOperator{\ch}{ch}
\DeclareMathOperator{\Coker}{Coker}
\DeclareMathOperator{\Cor}{Cor}
\DeclareMathOperator{\Gal}{Gal}
\DeclareMathOperator{\Hom}{Hom}
\DeclareMathOperator{\Jac}{Jac}
\DeclareMathOperator{\Ker}{Ker}
\DeclareMathOperator{\res}{res}
\DeclareMathOperator{\red}{red}
\DeclareMathOperator{\Spec}{Spec}
\DeclareMathOperator{\tr}{tr}
\newcommand{\ab}{\mathrm{ab}}
\newcommand{\cd}{\operatorname{cd}}
\newcommand{\Gm}{\mathbb{G}_{m}}
\newcommand{\otimesM}{\overset{M}{\otimes}}
\title{Zero-cycles on varieties over a $\fB_s$-field}
\author[T. Hiranouchi]{Toshiro Hiranouchi}\address[T. Hiranouchi]{
Department of Basic Sciences, Graduate School of Engineering, 
Kyushu Institute of Technology, 
1-1 Sensui-cho, Tobata-ku, Kitakyushu-shi, 
Fukuoka, 804-8550 JAPAN}
\email{hira@mns.kyutech.ac.jp}
\author[R. Sugiyama]{Rin Sugiyama} \address[R. Sugiyama]{
Department of Mathematics, Physics and Computer Science, 
Japan Women's University, 2-8-1 Mejirodai, Bunkyo-ku, Tokyo, 112-8681 JAPAN
}\email{sugiyamar@fc.jwu.ac.jp}
\keywords{Milnor $K$-groups, higher Chow groups, Kato homology groups} 
\subjclass[2020]{19D45; 14C15; 16K50}
\begin{document}

\begin{abstract}
A field $F$ is a $\fB_s$-field if, 
for every finite extension $E'/E$ of $F$, 
the norm map $K_s^M(E')\to K_s^M(E)$ 
of the Milnor $K$-groups is surjective. 
In particular, finite fields ($s=1$), local fields, and certain global fields (with $s=2$) satisfy this condition.
For such a field $F$ and a $d$-dimensional variety $X$ over $F$, 
we prove that $CH^{d+n}(X,n)$ is divisible for $n \geq s+1$. 
Under a suitable condition on the index of $X$, 
$CH^{d+s}(X,s)$ is isomorphic to the direct sum of the Milnor $K$-group $K_{s}^M(F)$ and a divisible group. 
As an application, we study the Kato homology groups $KH_0^{(n)}(X,\Z/l^r\Z)$ for any prime $l$ different from the characteristic of $F$. 
\end{abstract}

\maketitle

%%%%%%%%%%
\section{Introduction}
\label{sec:intro}

The study of the higher Chow groups $CH^m(X,n)$ for a scheme $X$ introduced by Bloch is fundamental in understanding motivic cohomology and its applications to arithmetic geometry. 
When $X$ is a smooth projective and geometrically irreducible scheme over a finite field $F$ of $d:=\dim(X)$, 
the structure of the group $CH^{d}(X,0) = CH_0(X)$ is well-understood 
by the higher-dimensional class field theory 
(e.g.~\cite[Section 10, Proposition 9]{KS83b}). 
The theorem below describes the structure of $CH^{d+n}(X,n)$ for $n\ge 1$; 
in particular, $CH^{d+1}(X,1)$ is finite.

\begin{thm}[{\cite[Theorem~1.2 and Theorem~1.3]{Akh04c}}] 
\label{thm:Akh}
    Let $X$ be a smooth projective and geometrically irreducible scheme over a finite field $F$ of $d = \dim(X)$.
    Then, we have $CH^{d+n}(X,n) = 0$ for $n\ge 2$, and
    the structure morphism $f\colon X\to \Spec(F)$ induces an isomorphism
    \[ 
       f_{\ast}\colon CH^{d+1}(X,1) \xrightarrow{\simeq} CH^{1}(\Spec(F),1) \simeq F^\times.
    \]
\end{thm}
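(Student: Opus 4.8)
The plan is to reduce the computation of these top higher Chow groups to a concrete complex of Milnor $K$-groups of residue fields and then to exploit that the closed points of $X$ have \emph{finite} residue fields. Writing $CH^{d+n}(X,n) = H^{2d+n}_M(X,\Z(d+n))$ in motivic cohomology, I would run the niveau (coniveau) spectral sequence obtained by iterating Bloch's localization sequence along the filtration of $X$ by dimension of support. Its $E_1$-page reads $E_1^{p,q} = \bigoplus_{x\in X^{(p)}} H^{q-p}_M(\kappa(x),\Z(d+n-p))$, and since for any field $L$ one has $H^i_M(\Spec L,\Z(j)) = 0$ for $i>j$, the only terms surviving into total degree $2d+n$ lie in the columns $p=d$ (the closed points) and $p=d-1$. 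Using the Nesterenko--Suslin--Totaro isomorphism $H^m_M(\Spec L,\Z(m)) \simeq K^M_m(L)$, this identifies
\[
  CH^{d+n}(X,n) \simeq \Coker\Bigl(\,\bigoplus_{y\in X^{(d-1)}} K^M_{n+1}(\kappa(y)) \xrightarrow{\ \partial\ } \bigoplus_{x\in X^{(d)}} K^M_{n}(\kappa(x))\,\Bigr),
\]
where $\partial$ is the sum of tame symbols (the Gersten--Milnor differential).

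Granting this presentation, the case $n\ge 2$ is immediate: every closed point $x\in X^{(d)}$ has residue field $\kappa(x)$ finite over $F$, hence finite, so $K^M_{n}(\kappa(x)) = 0$ for $n\ge 2$ and the right-hand group vanishes. For $n=1$ the target becomes $\bigoplus_{x}\kappa(x)^\times$, and I would describe $f_\ast$ through this presentation: proper pushforward is compatible with the niveau spectral sequence, and on the surviving quotient it is induced by the sum of field norms $\sum_x N_{\kappa(x)/F}\colon \bigoplus_x \kappa(x)^\times \to F^\times$. That $f_\ast$ is well defined on the cokernel, i.e. $f_\ast\circ\partial = 0$, is precisely the Milnor $K$-theory reciprocity law (Weil--Kato) applied to each proper curve $\overline{\{y\}}\subseteq X$ (here projectivity of $X$ enters). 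Surjectivity is then clear, since for a single closed point $x$ the norm $N_{\kappa(x)/F}\colon\kappa(x)^\times\to F^\times$ is already surjective for finite fields.

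The essential content, and the main obstacle, is injectivity: one must show that the kernel of $\sum_x N_{\kappa(x)/F}$ is exactly $\img(\partial)$, i.e. that the only relations among norms of units at closed points arise from tame symbols along curves. I would prove this by reduction to dimension one. Given a norm-trivial class represented by a unit datum $(a_x)_x$ supported on a finite set $S$ of closed points, choose an integral curve $C\subseteq X$ passing through $S$ and smooth along $S$ (such a curve exists by a Bertini/general-position argument, using that $X$ is smooth, projective and geometrically irreducible), with generic point $\eta\in X^{(d-1)}$ and normalization $\widetilde C\to C$ a smooth proper curve over $F$ with function field $\kappa(\eta)$. Since $\eta$ indexes one summand in the source of $\partial$, every tame symbol coming from $K^M_2(\kappa(\eta))$ already lies in $\img(\partial)$; the residue maps being compatible with normalization (an isomorphism on residue fields over the smooth locus $S$), it suffices to show that $(a_x)_x$ lies in the image of $\partial_{\widetilde C}$ on $\widetilde C$.

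This last step is exactly the one-dimensional case of the theorem: for a smooth proper geometrically connected curve over a finite field one needs the exactness of
\[
  K^M_2(\kappa(\eta)) \xrightarrow{\ \partial\ } \bigoplus_{x\in \widetilde C^{(1)}} \kappa(x)^\times \xrightarrow{\ \sum N\ } F^\times \to 0,
\]
which is the completeness of Weil reciprocity, i.e. class field theory for $\widetilde C$ over the finite constant field $F$. I expect the two genuinely delicate points to be (i) this class-field-theoretic exactness in the base case, and (ii) the moving argument producing a curve through a prescribed finite set together with the compatibility of residues under normalization; everything else is formal once the spectral-sequence presentation is in hand.
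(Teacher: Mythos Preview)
The paper does not prove this statement: it is quoted from \cite{Akh04c} and used as a black box (in the proofs of \autoref{thm:main}, \autoref{prop:surj}, and \autoref{thm:KH_fintie}). So there is no in-paper argument to compare against, but your outline is essentially Akhtar's and is correct in its broad strokes.

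The cokernel presentation you derive is exactly \eqref{eq:MK-Chow}, and the case $n\ge 2$ is immediate from $K^M_n$ of a finite field vanishing. For $n=1$, well-definedness of $f_\ast$ on the cokernel (Weil reciprocity) and surjectivity (norms on units of finite fields) are fine. The base case for a smooth projective geometrically connected curve is, as you say, Moore's reciprocity sequence for the global function field $\kappa(\eta)$, which does follow from class field theory; alternatively, in the language of this paper it is the vanishing of $K(F;J,\Gm)$ for $J=\Jac_{\widetilde C}$, obtained by combining the divisibility of \autoref{cor:div} (finite fields are $\fB_1$) with the fact that every symbol is torsion.

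The one genuine gap is in the reduction step. Over a finite field the phrase ``Bertini/general-position argument'' hides real content: there are only finitely many hypersurfaces of each degree, and the Altman--Kleiman theorem \cite{AK79} invoked in \autoref{prop:Van} requires the base field to be infinite (this is exactly why \autoref{prop:Van} is stated only for infinite $\fB_s$-fields and the finite-field case is deferred to the theorem you are proving). You need Poonen's Bertini theorem over finite fields, with local conditions at $S$, to produce $C$. Moreover you must ensure that $C$ is \emph{geometrically} irreducible over $F$, not merely integral: otherwise $\widetilde C$ is geometrically connected only over its constant field $F'\supsetneq F$, the base-case exactness is relative to $\sum N_{\kappa(x)/F'}$, and the hypothesis $\prod N_{\kappa(x)/F}(a_x)=1$ only gives $\prod N_{\kappa(x)/F'}(a_x)\in\ker N_{F'/F}$, which need not be trivial. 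Poonen's theorem handles both issues at once, but you should invoke it explicitly rather than a generic Bertini.
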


We investigate an analogue over fields $F$ satisfying the following condition:
for any finite extension $E/F$, 
and any finite extension $E'/E$, the norm map 
\[
N_{E'/E}\colon K_s^M(E') \to K_s^M(E)
\]
on the Milnor $K$-groups is surjective for some $s\ge 0$. 
Such fields are called \textbf{$\fB_s$-fields} in 
\cite[Section 3.3]{Kat78}
($C_0^s$ in the sense of \cite{KK86}, see \cite[Lemma 2]{KK86}). 
This condition is closely related to the cohomological dimension of $F$. 
For example, 
when the characteristic of $F$ is 0,
$F$ is a $\fB_s$-field if and only if the cohomological dimension of $F$ is at most $s$ (cf.~\autoref{prop:Kato}). 
A finite field is $\fB_1$, while
local fields, totally imaginary number fields, and function fields of one variable over a finite field are $\fB_2$
(\autoref{lem:global_B2}). 
Moreover, if $F$ is an \emph{$N$-dimensional local field} then $F$ is a $\fB_{N+1}$-field (cf.~\autoref{prop:Kato}). 
Here, an \textbf{$N$-dimensional local field} is a complete discrete valuation field whose residue field is an $(N-1)$-local field, and a $0$-dimensional local field is a finite field. 
The structure of $CH^{d+N}(X,N)$, often denoted by $SK_{N}(X)$, has been described by the class field theory 
for $X$ (\cite{Blo81}, \cite{Sai85a}, \cite{For16} and \cite{GKR22}). 
For a $\fB_s$-field $F$, 
we consider the higher Chow group $CH^{d+n}(X,n)$ for $n\ge s$. 
Our main result is the following.

%% main theorem 
\begin{thm}[{\autoref{thm:main}, \autoref{cor:main}}]
\label{thm:main_intro}
    Let $F$ be a $\fB_s$-field for some $s\ge 0$, and 
    let $X$ be a smooth projective and geometrically irreducible scheme over $F$ of $d = \dim(X)$. 
    Then, $CH^{d+n}(X,n)$ is divisible for $n\ge s+1$, and the structure morphism $f\colon X\to \Spec(F)$ induces a surjective homomorphism 
    \[
    f_\ast\colon CH^{d+s}(X,s) \twoheadrightarrow CH^{s}(\Spec(F), s).
    \]
	If we further assume that $X$ has a zero-cycle of degree one, then the kernel of $f_\ast$ is divisible.
\end{thm}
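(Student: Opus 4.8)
The plan is to translate the statement about higher Chow groups into one about the Milnor $K$-theory cycle module, and then to exploit the $\fB_s$-hypothesis through the norm maps together with the cohomological-dimension bound. First I would identify $CH^{d+n}(X,n)$ with the degree-zero homology of the Gersten--Rost complex for $K_n^M$. Combining Bloch's localization sequence and the resulting niveau spectral sequence for higher Chow groups with the Nesterenko--Suslin--Totaro isomorphism $CH^n(\Spec k,n)\cong K_n^M(k)$ at each point, the top edge of the spectral sequence presents
\[
CH^{d+n}(X,n)\;\cong\;\Coker\!\Big(\bigoplus_{y\in X_{(1)}}K_{n+1}^M(\kappa(y))\xrightarrow{\ \partial\ }\bigoplus_{x\in X_{(0)}}K_{n}^M(\kappa(x))\Big),
\]
where $X_{(i)}$ is the set of $i$-dimensional points and $\partial$ is the tame-symbol/residue map; under this identification $f_\ast$ is induced by the norm maps $N_{\kappa(x)/F}\colon K_n^M(\kappa(x))\to K_n^M(F)$. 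Each residue field $\kappa(x)$ of a closed point is a finite extension of $F$. For the surjectivity of \autoref{prop:surj} (the case $n=s$) I would then choose one closed point $x_0$: since $F$ is a $\fB_s$-field, $N_{\kappa(x_0)/F}\colon K_s^M(\kappa(x_0))\to K_s^M(F)$ is already surjective, and this norm is the restriction of $f_\ast$ to the $x_0$-summand, so $f_\ast$ is surjective.

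For the divisibility when $n\ge s+1$ I would use that the class of $\fB_s$-fields is closed under finite extensions, so every $\kappa(x)$ above is again a $\fB_s$-field. The key input is that for a $\fB_s$-field $E$ and $n>s$ the group $K_n^M(E)$ is divisible: by the Bloch--Kato norm residue isomorphism $K_n^M(E)/\ell^r\cong H^n_{\et}(E,\mu_{\ell^r}^{\otimes n})$ this vanishes for $\ell\neq\operatorname{char}F$ because $\cd_\ell(E)\le s<n$ (the characterization $\fB_s\Leftrightarrow\cd\le s$ of \autoref{prop:Kato}), while the $p$-primary part ($p=\operatorname{char}F$) vanishes by the logarithmic de Rham--Witt/$\dlog$ description of $K_n^M(E)/p^r$ together with the $\fB_s$-hypothesis. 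Hence $\bigoplus_{x}K_n^M(\kappa(x))$ is divisible, and $CH^{d+n}(X,n)$, being a quotient of it by the displayed cokernel presentation, is divisible as well.

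The substantial point is the divisibility of $V:=\Ker\big(f_\ast\colon CH^{d+s}(X,s)\to K_s^M(F)\big)$, i.e. the boundary case $n=s$ where $K_s^M(\kappa(x))$ need not be divisible. I would prove $V/m=0$ for every $m\ge 1$ by a diagram chase on $0\to V\to CH^{d+s}(X,s)\xrightarrow{f_\ast}K_s^M(F)\to0$: this reduces $m$-divisibility of $V$ to the two assertions that $f_\ast$ is an isomorphism after $\otimes\,\Z/m$ and surjective on $m$-torsion. Both are approached by reducing the cokernel presentation modulo $m$ (using right-exactness of $-\otimes\Z/m$) and rewriting it, via Bloch--Kato $K_\bullet^M(\kappa)/m\cong H^\bullet_{\et}(\kappa,\mu_m^{\otimes\bullet})$, as the relevant degree of the Kato complex; the bound $\cd(\kappa(x))\le s$ coming from $\fB_s$ then forces the comparison with $H^s_{\et}(F,\mu_m^{\otimes s})\cong K_s^M(F)/m$. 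This is the main obstacle: one must control the Kato/Galois-cohomology boundary maps and the $m$-torsion compatibly with $f_\ast$, and separately treat the $p$-primary part in characteristic $p$ via logarithmic de Rham--Witt cohomology. Once $V$ is shown to be divisible it is injective as a $\Z$-module, so the short exact sequence splits and $CH^{d+s}(X,s)\cong K_s^M(F)\oplus V$, recovering the direct-sum description stated in the abstract.
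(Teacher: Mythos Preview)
Your treatment of the two easier assertions is correct and essentially matches the paper: the surjectivity of $f_\ast$ at $n=s$ comes from picking a closed point and using that the composite is the norm map (this is exactly \autoref{prop:surj}), and the divisibility for $n\ge s+1$ follows from the cokernel presentation together with the $l$-divisibility of $K_n^M(E)$ for a $\fB_s$-field $E$ and $n>s$ (the paper cites Kato's \autoref{lem:KatLem10} rather than arguing via Bloch--Kato and $\dlog$, but the conclusion is the same).

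The gap is at the crucial step, the divisibility of $V=\Ker(f_\ast)$ when $n=s$. Rewriting the cokernel presentation modulo $m$ via the norm residue isomorphism does identify $CH^{d+s}(X,s)/l$ with $KH_0^{(s-1)}(X,\Z/l)$ (after a Tate twist), but the assertion that ``the bound $\cd(\kappa(x))\le s$ forces the comparison with $H^s_{\et}(F,\mu_m^{\otimes s})$'' is not an argument: at $n=s$ the groups $H^s(\kappa(x),\mu_l^{\otimes s})\cong K_s^M(\kappa(x))/l$ are typically nonzero, and the cohomological-dimension bound says nothing about the cokernel of the boundary map into them. What you would need is precisely that $f_\ast^{KH}\colon KH_0^{(s-1)}(X,\Z/l)\to H_l^s(F)$ is an isomorphism, and in the paper this is \autoref{thm:KH}, which is \emph{deduced from} \autoref{thm:main} rather than used to prove it. So your sketch is circular at this point; you have correctly located the obstacle but not supplied a mechanism to overcome it.

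The paper's route is genuinely different and supplies that mechanism geometrically. Via Akhtar's isomorphism (\autoref{thm:Akh2}) one replaces $CH^{d+s}(X,s)$ by the Somekawa-type group $K(F;\underline{CH}_0(X),\Gm,\ldots,\Gm)$, and the problem becomes showing that the Mackey product $(\underline{A}_0(X)\otimesM\Gm^{\otimesM s})(F)$ is $l$-divisible (\autoref{prop:Van}). This is done by: (i) reducing to a curve $C\subset X$ through the given zero-cycle via a Bertini-type theorem; (ii) passing to the normalization and a base change so that $C$ becomes smooth (\autoref{lem:red}, \autoref{lem:normal}, with \autoref{lem:norm} guaranteeing the transfer maps stay surjective modulo $l$); and (iii) for a smooth projective curve, identifying $\underline{A}_0(C)$ with its Jacobian $J$ and invoking \autoref{cor:div}, whose proof uses the projection formula in the Mackey product together with the fact that one can extract $l$-th roots on a semi-abelian variety after a finite extension. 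None of this is visible from the Gersten/Kato-complex description alone, and it is this geometric input---ultimately the divisibility of the Jacobian as an algebraic group---that makes the $n=s$ case go through.
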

%When $F$ is a finite field, by \cite{Sou}, there is a 0-cycle of degree one on $X$.

\begin{rem}
For example, $F = \bC$ is a $\fB_0$-field. 
For an elliptic curve $X=E$ over $\bC$, 
the kernel of $f_\ast \colon CH^{1}(E,0) = CH_0(E) \to CH^0(\Spec(\bC),0)  = \Z$ 
is isomorphic to $E(\bC)$. 
Since  $E(\bC)[m]\simeq (\Z/m\Z)^{\oplus 2}$, 
the group has torsion and hence the kernel $E(\bC)$ is divisible but not \emph{uniquely} divisible. 
This example suggests that the kernel $\Ker(f_{\ast})$ in the above theorem is neither trivial nor uniquely divisible in general and 
our result is an optimal generalization of \autoref{thm:Akh}. 
\end{rem}
For the scheme $X$ over $F$ as in the above theorem, 
the Nesterenko-Suslin and Totaro theorem (\cite{NS89}, \cite{Tot92}) gives an isomorphism 
$CH^{s}(\Spec(F),s) \simeq K_{s}^M(F)$. 
Under the additional assumption that \(X\) has a zero-cycle of degree one,  
we obtain the following decomposition 
\[
    CH^{d+s}(X,s) \simeq K_{s}^M(F) \oplus D, 
\]
for a divisible group $D$.
Much is known about the group structure of $K_{s}^M(F)$ 
for \emph{arithmetic fields}, such as local fields, global fields, and higher-dimensional local fields. 
For example, when $F$ is a local field, 
Sivitskii's theorem asserts that $K_n^M(F)$ is uncountable uniquely divisible for $n\ge 3$ 
(\cite[Chapter~IX, Theorem~4.11]{FV02}) 
and Moore's theorem (\cite[Chapter IX, Theorem 4.3]{FV02}) 
describes the structure of $K_2^M(F)$. 
\autoref{thm:main_intro} implies that
$CH^{d+n}(X,n)$ is divisible for $n\ge 3$, 
and, under the additional
assumption that $X$ has a zero-cycle of degree one, that there  is a decomposition  
\[
 CH^{d+2}(X,2) \simeq \mu(F) \oplus D, 
\]
where $\mu(F)$ denotes the group of roots of unity in $F$ and $D$ is a divisible group (see \autoref{cor:CH}). 
These results are refinements of \cite[Theorem 1.7 (3), Corollary 1.8]{GKR22}. 
For other arithmetic fields $F$, the structure of $CH^{d+n}(X,n)$ can be described via the above theorem 
(see \autoref{cor:CH_high}, and \autoref{cor:CH_FF}).

As another application of our main theorem, 
we study the Kato homology groups of $X$. 
To state the results precisely, 
let $F$ be a field of characteristic $p\ge 0$. 
For an excellent scheme $X$ over $F$ 
and a prime $l$ with $l\neq p$,
there is a homological complex $KC^{(n)}_{\bullet}(X,\Z/l^r\Z)$ of Bloch--Ogus type (\cite[Proposition~1.7]{Kat86}): 
\[
\xymatrix@C=5mm@R=0mm{
 \cdots \ar[r]^-{\partial} & \displaystyle\bigoplus_{x\in X_{j}}H^{n+j+1}_{l^r}(F(x)) \ar[r]^-{\partial} &\cdots  
 \ar[r]^-{\partial}& \displaystyle\bigoplus_{x\in X_{1}}H^{n+2}_{l^r}(F(x)) \ar[r]^-{\partial}  & \displaystyle\bigoplus_{x\in X_{0}}H^{n+1}_{l^r}(F(x)). \\
 & \mbox{\small degree $j$} & & \mbox{\small degree $1$} & \mbox{\small degree $0$} 
}
\]
for any $n\ge 0$, 
where $X_j$ is the set of points $x$ in $X$ with $\dim(\overline{\set{x}}) = j$,  
$F(x)$ is the residue field at $x$, and
\[
H_{l^r}^{m+1}(F(x)) := H^{m+1}(F(x), \mu_{l^r}^{\otimes m}).
\]
The \textbf{Kato homology group} of $X$ (with coefficients in $\Z/l^r\Z$) is defined 
to be the homology group 
\[
KH_j^{(n)}(X,\Z/l^r\Z) := H_j(KC_{\bullet}^{(n)}(X,\Z/l^r\Z))
\]
of the above complex $KC_\bullet^{(n)}(X,\Z/l^r\Z)$. 

For the induced map $f_{\ast}:KH_0^{(n)} (X,\Z/l^r\Z) \to KH_0^{(n)}(\Spec(F),\Z/l^r\Z)$ 
arising from the structure morphism $f\colon X\to \Spec(F)$, 
\autoref{thm:Akh} and \autoref{thm:main} yield the following. 

\begin{cor}[{\autoref{thm:KH}}]
    Let $F$ be a $\fB_s$-field for some $s\ge 1$ of characteristic $p \ge 0$, and 
    let $X$ be a smooth projective and geometrically irreducible scheme over $F$ 
    with structure morphism $f\colon X\to \Spec(F)$. 
    Assume that $X$ has a zero-cycle of degree one.
    Then, 
   \[
    f_{\ast}\colon KH_0^{(s-1)}(X,\Z/l^r\Z) \xrightarrow{\simeq} KH_0^{(s-1)}(\Spec(F),\Z/l^r\Z)
    \]
    is an isomorphism for any prime $l\neq p$ and any $r\ge 1$.
\end{cor}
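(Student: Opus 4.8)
The plan is to read both sides as the degree-$0$ homology of Kato's Gersten–Cousin complex and to split the assertion into an (easy) surjectivity and a (hard) injectivity statement, deducing the latter from \autoref{thm:main}. Setting $M_q(E):=H^{q+1}(E,\mu_{l^r}^{\otimes q})$, the complex $KC^{(s-1)}_\bullet(X,\Z/l^r\Z)$ is the cycle complex of the cohomological cycle module $M_\bullet$, so that
\[
KH_0^{(s-1)}(X,\Z/l^r\Z)=\Coker\Bigl(\bigoplus_{x\in X_1}H^{s+1}_{l^r}(F(x))\xrightarrow{\ \partial\ }\bigoplus_{x\in X_0}H^{s}_{l^r}(F(x))\Bigr),
\]
while $KH_0^{(s-1)}(\Spec(F),\Z/l^r\Z)=H^{s}_{l^r}(F)=H^{s}(F,\mu_{l^r}^{\otimes(s-1)})$. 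Since push-forward along the proper morphism $f$ is a morphism of such cycle complexes, $f_\ast$ is the map induced on $\Coker$ by the sum of corestrictions $\sum_{x\in X_0}\Cor_{F(x)/F}$, and in particular it is well defined. It thus suffices to show that this sum is surjective and that its kernel is exactly the image of $\partial$.

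Surjectivity is the easy half. Because $F$ is a $\fB_s$-field, $\cd_l(F)\le s$ for every $l\neq p$ (\autoref{prop:Kato}), and the same bound holds for the finite residue fields $F(x)$, $x\in X_0$. For a profinite group of $l$-cohomological dimension $\le s$ and an open subgroup, corestriction is surjective in degree $s$: applying Shapiro's lemma to the counit $\mathrm{Ind}_{F(x)/F}\mu_{l^r}^{\otimes(s-1)}\twoheadrightarrow \mu_{l^r}^{\otimes(s-1)}$ and using that $H^{s+1}$ of its kernel vanishes, one gets that each $\Cor_{F(x)/F}\colon H^{s}(F(x),\mu_{l^r}^{\otimes(s-1)})\to H^{s}(F,\mu_{l^r}^{\otimes(s-1)})$ is onto. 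As $X$ is projective it has a closed point, so already one summand of $f_\ast$ is surjective; hence $f_\ast$ is surjective.

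The heart of the proof is injectivity, equivalently the exactness of
\[
\bigoplus_{x\in X_1}H^{s+1}(F(x),\mu_{l^r}^{\otimes s})\xrightarrow{\ \partial\ }\bigoplus_{x\in X_0}H^{s}(F(x),\mu_{l^r}^{\otimes(s-1)})\xrightarrow{\ f_\ast\ }H^{s}(F,\mu_{l^r}^{\otimes(s-1)}).
\]
My plan is to transport the kernel computation of \autoref{thm:main} to cohomology, exploiting that $\Z/l^r\Z$-coefficients have bounded exponent. Under the identification $CH^{d+s}(X,s)=\Coker\bigl(\bigoplus_{x\in X_1}K_{s+1}^M(F(x))\to\bigoplus_{x\in X_0}K_{s}^M(F(x))\bigr)$, \autoref{thm:main} says that the homology of the Milnor complex $\bigoplus_{X_1}K_{s+1}^M(F(x))\to\bigoplus_{X_0}K_{s}^M(F(x))\to K_s^M(F)$ at its middle term is a divisible group, and that $CH^{d+n}(X,n)$ is divisible for $n\ge s+1$. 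A divisible group dies modulo $l^r$, so across the norm residue isomorphism $K_q^M(E)/l^r\simeq H^q(E,\mu_{l^r}^{\otimes q})$ these contributions vanish. The main obstacle is the one-step shift in cohomological degree between the two complexes: the Kato terms sit one degree above the motivic range, so the passage is not a literal reduction of \autoref{thm:main} modulo $l^r$. To realign them I would run the Bloch–Ogus–Gabber Gersten resolution for the étale sheaf $\mu_{l^r}^{\otimes(s-1)}$ on the smooth $X$ — whose Cousin complex is exactly $KC^{(s-1)}_\bullet(X,\Z/l^r\Z)$ — and use the vanishing above degree $s$ forced by $\cd_l\le s$ to identify its top homology with the (now trivial, by bounded exponent) divisible-kernel data provided by \autoref{thm:main}; carrying out this identification compatibly with the trace for $f$ is where the real work lies.

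Combining surjectivity with this exactness gives that $f_\ast$ is an isomorphism. When $s=1$ and $F$ is finite the statement is \autoref{thm:KH_fintie}, which rests on \autoref{thm:Akh} in place of \autoref{thm:main}; the argument above is the common generalization to an arbitrary $\fB_s$-field.
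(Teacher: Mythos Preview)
Your surjectivity argument via corestriction in the top cohomological degree is fine and is a clean alternative to what the paper does (the paper simply reads both injectivity and surjectivity off a single commutative square). The gap is in the injectivity half.

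You have misdiagnosed the obstacle. There is no ``one-step shift in cohomological degree'': for $x\in X_0$ the Milnor side gives $K_s^M(F(x))/l^r\simeq H^s(F(x),\mu_{l^r}^{\otimes s})$ while the Kato side has $H^s(F(x),\mu_{l^r}^{\otimes(s-1)})$, so the degrees agree and it is the \emph{Tate twist} that differs by one. The Bloch--Ogus--Gabber machinery you propose does not realign twists, and your own phrase ``carrying out this identification \ldots\ is where the real work lies'' is an honest admission that the argument is not complete. In fact the Gersten resolution for $\mu_{l^r}^{\otimes(s-1)}$ reproduces the Kato complex you started with, so invoking it is circular rather than helpful.

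The paper's fix is elementary and you were very close to it. First reduce to $r=1$ by a d\'evissage on the coefficient sequence $0\to\mu_{l^r}\to\mu_{l^{r+1}}\to\mu_l\to 0$ (this is \autoref{lem:r}; the point is that $\cd_l\le s$ kills the boundary maps). Then replace $F$ by $F(\mu_l)$: since $[F(\mu_l):F]$ is prime to $l$, a restriction--corestriction argument (\autoref{lem:F'}) lets you assume $\mu_l\subset F$. Now fix an isomorphism $\mu_l\simeq\Z/l$; this identifies $H^q(E,\mu_l^{\otimes(q-1)})$ with $H^q(E,\mu_l^{\otimes q})\simeq K_q^M(E)/l$ for every finite extension $E/F$, compatibly with residues and corestrictions. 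The Kato complex modulo $l$ is then literally the Milnor complex modulo $l$, and the map $f_\ast^{KH}$ becomes $f_\ast^{CH}\colon CH^{d+s}(X,s)/l\to K_s^M(F)/l$, which is an isomorphism by \autoref{thm:main}. No Gersten resolution is needed; the whole passage is \autoref{lem:CHKH}.
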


Applying the above corollary to a function field $F$ of one variable over a finite field, 
and to the local field $F_v$ associated with a valuation $v$ of $F$, we obtain the following isomorphisms.
\begin{align*}
    KH_0^{(1)}(X,\Z/l^r\Z) &\simeq KH_0^{(1)}(\Spec(F),\Z/l^r\Z) \simeq \Br(F)[l^r],\ \mbox{and}\\
    KH_0^{(1)}(X_v,\Z/l^r\Z) &\simeq KH_0^{(1)}(\Spec(F_v),\Z/l^r\Z) \simeq \Br(F_v)[l^r], 
\end{align*}
where $X_v = X\otimes_F F_v$ and $\Br(F)[l^r]$ is the $l^r$-torsion subgroup of the Brauer group $\Br(F)$ of $F$.
The Hasse--Brauer--Noether theorem on the Brauer group then yields the following short exact sequence: 
\[
0 \to KH_0^{(1)}(X,\Z/l^r\Z) \to \bigoplus_v KH_0^{(1)}(X_v,\Z/l^r\Z) \to \Z/l^r\Z \to 0.
\]
The existence of this short exact sequence is a part of the Kato conjectures (cf.~\autoref{conj:Kato3}). 
\subsection*{Notation}
Throughout this note, 
by a \textbf{global field} we mean a finite extension of $\Q$ (a \textbf{number field}) or a 
function field of one variable over a finite field. 
For an abelian group $G$ and $m\in \Z_{\ge 1}$,  
we write $G[m]$ and $G/m$ for the kernel and cokernel of the multiplication by $m$ on $G$ respectively. 
For a field $F$, we denote by $\ch(F)$ its characteristic 
and we denote by $\cd(F)$ (resp.~$\cd_l(F)$) the cohomological  dimension (resp.~$l$-cohomological dimension for a prime number $l$) of the absolute Galois group $G_F$ of $F$ (cf.~\cite[Chapter I, Section 3]{SerreCG}).

\subsection*{Acknowledgements} 
The first author would like to thank Jitendra Rathore for sending him his preprint \cite{GR25} and providing important hints regarding the treatment of $V(X)$ for a higher-dimensional scheme $X$.
%The authors thank the referee for their careful reading and for the many valuable suggestions that improved our manuscript.
The first author was supported by JSPS KAKENHI Grant Number 24K06672.
The second author was supported by JSPS KAKENHI Grant Number 21K03188.

%%%%%%%%%%%%%%%%
%%%%%%%%%%%%%%%%
\section{Preliminaries}
\label{sec:Mackey}
This section establishes the foundational concepts essential to the paper's main arguments. 
It begins by defining the Mackey product following \cite[Section~5]{Kah92a} (see also \cite[Remark~1.3.3]{IR17}, \cite[Section~3]{RS00}). 
Following this, it formally introduces higher Chow groups and concludes by defining a $\fB_s$-field and detailing its fundamental properties, which are crucial for the subsequent sections.

\subsection*{Mackey product}
A \textbf{Mackey functor} $\cM$ over a field $F$ 
(a cohomological finite Mackey functor over $F$ in the sense of \cite{Kah92a}) is a covariant functor $\cM$ 
from the category of field extensions of $F$ to the category of abelian groups equipped with a contravariant structure for finite extensions over $F$ 
satisfying some conditions (see \cite{Kah92a}, or \cite{Hir24} for the precise definition). 
The category of Mackey functors forms a Grothendieck abelian category (cf.~\cite[Appendix A]{KY13}), 
so any morphism of Mackey functors $f\colon \cM\to \cN$, 
that is, a natural transformation, 
gives the image $\operatorname{Im}(f)$, the cokernel $\Coker(f)$ and so on.

\begin{ex}\label{ex:MFexs}
	\begin{enumerate}
	\item 
	For any endomorphism $f\colon \cM\to \cM$ of a Mackey functor $\cM$, 
	we denote the cokernel by 
	$\cM/f := \Coker(f)$. 
	This Mackey functor is given by 
	\[
	(\cM/f)(E) = \Coker\left(f(E)\colon \cM(E)\to \cM(E)\right).
	\]

	\item 
	Any commutative algebraic group $G$ over $F$ induces a Mackey functor, 
    also denoted by $G$, by defining $G(E)$ for any field extension $E/F$
	 (cf.\ \cite[(1.3)]{Som90}, \cite[Proposition~2.2.2]{IR17}). 
	In particular, 
	the multiplicative group $\Gm$ is a Mackey functor given by 
	$\Gm(E) = E^{\times}$ 
	for any field extension $E/F$. 
	The translation maps are the norm 
	$N_{E'/E}\colon (E')^{\times} \to E^{\times}$ and the inclusion $E^{\times} \to (E')^{\times}$. 
	% For the additive group $\Ga$, the translation maps are 
	% the trace map $\Tr_{E'/E}:E'\to E$ and the inclusion $E\hookrightarrow E'$. 
	 \item 
 Recall that, for $n\ge 1$, the \textbf{Milnor $K$-group} $K_n^M(F)$ of a field $F$   
	 is the quotient group of $(F^{\times})^{\otimes_{\Z} n}$ by the subgroup generated by all elements of the form 
	 $a_1\otimes \cdots \otimes a_n$ 
	 with $a_i + a_{j} = 1$ for some $i\neq j$. 
	 We also put $K_0^M(F) = \Z$. 
	 % The class of $a_1\otimes \cdots \otimes a_n $ in $K_n^M(F)$ 
	 % is denoted by $\set{a_1,\ldots, a_n}_{k}$. 
	 For an extension $E'/E$ over $F$, 
	 the restriction map $\res_{E'/E}:K_n^M(E)\to K_n^M(E')$ 
	 and the norm map $N_{E'/E}:K_n^M(E')\to K_n^M(E)$ when $[E':E]<\infty$ 
	 gives the structure of the Mackey functor $K_n^M$.
	\end{enumerate}
\end{ex}

\begin{dfn}\label{def:otimesM}
    For Mackey functors $\cM_1,\ldots, \cM_n$ over $F$, 
    the \textbf{Mackey product} $\cM_1\otimesM \cdots \otimesM \cM_n$ is defined as follows: For any field extension $F'/F$, 
    \begin{equation}\label{eq:otimesM}
		(\cM_1 \otimesM \cdots \otimesM \cM_n ) (F') := 
		\left.\left(\bigoplus_{E/F':\,\mathrm{finite}} \cM_1(E) \otimes_{\Z} \cdots \otimes_{\Z} \cM_n(E)\right)\middle/\ (\textbf{PF}),\right. 
    \end{equation}
    where (\textbf{PF}) stands for the subgroup generated 
	by elements of the following form: 
	For finite field extensions $F' \subset E \subset E'$, 
	\[
		 x_1 \otimes \cdots\otimes \tr_{E'/E}(\xi_{i_0}) \otimes \cdots \otimes x_{n} - \res_{E'/E}(x_1) \otimes \cdots \otimes \xi_{i_0} \otimes \cdots \otimes \res_{E'/E}(x_n) \quad 
	\]
	for $\xi_{i_0} \in \cM_{i_0}(E')$ and $x_{i} \in \cM_i(E)$ for $i\neq i_0$.
\end{dfn}

For the Mackey product $\cM_1\otimesM \cdots \otimesM \cM_n$,
we write $\set{x_1,\ldots ,x_n}_{E/F'}$ 
for the image of $x_1 \otimes \cdots \otimes x_n \in \cM_1(E) \otimes_{\Z} \cdots \otimes_{\Z} \cM_n(E)$ in the product
$(\cM_1\otimesM \cdots \otimesM \cM_n)(F')$. 
The subgroup (\textbf{PF}) gives a relation of the form 
\begin{equation}
    \label{eq:pf}
\set{x_1, \ldots, \tr_{E'/E}(\xi_{i_0}), \ldots,x_{n}}_{E/F'} = \set{x_1,\ldots, \xi_{i_0},\ldots,x_n}_{E'/F'}
\end{equation}
for $\xi_{i_0} \in \cM_{i_0}(E')$ and $x_{i} \in \cM_i(E)$ for $i\neq i_0$
omitting the restriction maps $\res_{E'/E}$ on the right.
The above equation \eqref{eq:pf} is referred to as \textbf{the projection formula}.

The product $\cM_1\otimesM \cdots \otimesM \cM_n$ is a Mackey functor 
with transfer maps defined as follows:
For a field extension $E/F$, 
the restriction 
$\res_{E/F}\colon (\cM_1\otimesM \cdots \otimesM \cM_n)(F) \to (\cM_1\otimesM \cdots \otimesM \cM_n)(E)$ is defined by 
\[
 	\res_{E/F}(\set{x_1,\ldots ,x_n}_{F'/F})= \sum_{i=1}^r e_i\set{\res_{E_i'/F'}(x_1), \ldots , \res_{E_i'/F'}(x_n)}_{E_i'/E}, 
\]
where $E\otimes_FF' = \bigoplus_{i=1}^r A_i$ for some local Artinian algebras $A_i$ 
 of dimension $e_i$ over the residue field $E_i' := A_i/\mathfrak{m}_{A_i}$.
When $E/F$ is finite, 
the transfer map
$\tr_{E/F}\colon  (\cM_1\otimesM \cdots \otimesM \cM_n)(E) \to (\cM_1\otimesM \cdots \otimesM \cM_n)(F)$ 
is given by 
\[
	\tr_{E/F}(\set{x_1,\ldots ,x_n}_{E'/E}) = \set{x_1,\ldots ,x_n}_{E'/F}.
\]
The product $\otimesM$ endows the category of Mackey functors with a tensor structure.

\subsection*{Higher Chow groups}
Let $F$ be a field, 
and let $X$ be a separated scheme of finite type over $F$ 
of $d = \dim(X)$.
The boundary maps on Milnor $K$-groups give rise to a complex (cf.~\cite[Section~1]{Kat86})
\begin{equation}
\label{eq:MKC2}
\vcenter{
\xymatrix@C=5mm@R=-2mm{
 \cdots \ar[r]^-{\partial} & \displaystyle\bigoplus_{x\in X_{j}}K_{n+j}^M(F(x)) \ar[r]^-{\partial} &\cdots  
 \ar[r]^-{\partial}& \displaystyle\bigoplus_{x\in X_{1}}K_{n+1}^M(F(x)) \ar[r]^-{\partial}  & \displaystyle\bigoplus_{x\in X_{0}}K_{n}^M(F(x)) \\
 & \mbox{\small degree $j$} & & \mbox{\small degree $1$} & \mbox{\small degree $0$} 
}}
\end{equation}
for any $n\ge 0$. 
The $0$-th homology group of the complex \eqref{eq:MKC2} is isomorphic to  
the higher Chow group of $X$ 
when $X$ is smooth, quasi-projective, and geometrically irreducible over $F$ 
(\cite[Theorem~3]{Kat86b}, \cite[Theorem~5.5]{Akh04}):
\begin{equation}\label{eq:MK-Chow} 
  \Coker\left(\partial\colon \bigoplus_{x\in X_1}K_{n+1}^M(F(x))\to \bigoplus_{x\in X_0}K_{n}^M(F(x))\right)
  \simeq CH^{d+n}(X,n). 
\end{equation}
The structure morphism $f\colon X\to \Spec(F)$ induces a map 
\begin{equation}\label{eq:fCH}
    f_{\ast}^{CH}\colon CH^{d+n}(X,n) \to CH^n(\Spec(F),n) \simeq K_n^M(F),  
\end{equation}
where the last isomorphism is the Nesterenko--Suslin/Totaro theorem (\cite{NS89}, \cite{Tot92}). 
The kernel is denoted by 
\[
A^{d+n}(X,n) := \Ker(f_{\ast}^{CH}).
\]

Define the Mackey functor $\underline{CH}^{d+n}(X,n)$ 
given by the higher Chow group 
\[
\underline{CH}^{d+n}(X,n)(E) = CH^{d+n}(X_E,n)
\]
for each extension $E/F$, 
where $X_E := X\otimes_F E$.
The transfer maps are given by the pull-back and the push-forward  
along $j:\Spec(E) \to \Spec(F)$.
The map $f_{\ast}^{CH}$ induces a map
\[
f_{\ast}^{CH}\colon \underline{CH}^{d+n}(X,n) \to \underline{CH}^n(\Spec(F),n) \simeq K_n^M
\]
 of Mackey functors 
 ({see, for example \cite[Proposition~7.6.1, Proposition~7.6.2]{Akh00}, and \cite[Section~4]{Akh04}}), 
and its kernel is denoted by $\underline{A}^{d+n}(X,n)$. 
In particular, the Chow group $CH^{d}(X,0) = CH_0(X)$ of zero-cycles on $X$ gives rise to 
the Mackey functors 
\[
\underline{CH}_0(X) := \underline{CH}^d(X,0),\ \mbox{and}\ \underline{A}_0(X) := \underline{A}^d(X,0).
\]

\begin{thm}[{\cite[Theorem~6.1]{Akh04}}]\label{thm:Akh2}
    Let $X$ be a smooth projective and geometrically irreducible scheme over a field $F$ of $d = \dim(X)$.
    There is an isomorphism 
    \[
    K(F;\underline{CH}_0(X),\overbrace{\Gm,\ldots,\Gm}^n) \xrightarrow{\simeq} CH^{d+n}(X,n),
    \]
    for $n\ge 0$, where 
    the left hand side is the Somekawa type $K$-group 
    which is a quotient of the Mackey product 
    $\Big(\underline{CH}_0(X)\otimesM \Gm^{\otimesM n}\Big)(F)$. 
\end{thm}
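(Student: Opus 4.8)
The plan is to construct a natural homomorphism $\Phi$ from the Somekawa-type $K$-group to $CH^{d+n}(X,n)$ and to show it is an isomorphism by comparing generators and relations on the two sides. On the target I would use the presentation \eqref{eq:MK-Chow}, which realizes $CH^{d+n}(X,n)$ as the cokernel of the Kato boundary
\[
\partial\colon \bigoplus_{y\in X_1}K_{n+1}^M(F(y)) \longrightarrow \bigoplus_{x\in X_0}K_n^M(F(x));
\]
thus it is generated by the classes of Milnor symbols lying in the summand $K_n^M(F(x))$ attached to a closed point $x\in X_0$, modulo the image of $\partial$. On the source I would use that $K(F;\underline{CH}_0(X),\Gm,\dots,\Gm)$ is the quotient of the Mackey product $(\underline{CH}_0(X)\otimesM\Gm^{\otimesM n})(F)$ by the reciprocity (Weil reciprocity) relations, so that it is generated by the symbols $\set{a,u_1,\dots,u_n}_{E/F}$ with $a\in CH_0(X_E)$ and $u_i\in E^\times$. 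The case $n=0$ is immediate, since both sides are then $CH_0(X)$.

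Let $p_E\colon X_E\to\Spec(E)$ and $\pi_E\colon X_E\to X$ be the two projections, with $\pi_E$ finite whenever $E/F$ is finite. Using the product and proper pushforward on higher Chow groups I would set
\[
\Phi(\set{a,u_1,\dots,u_n}_{E/F}) := \pi_{E\ast}\!\left(a\cdot p_E^{\ast}\set{u_1,\dots,u_n}\right),
\]
where $a\in CH_0(X_E)=CH^d(X_E,0)$ and the Milnor symbol $\set{u_1,\dots,u_n}$ is viewed in $K_n^M(E)\simeq CH^n(\Spec(E),n)$. Well-definedness on the Mackey product follows from the projection formula for proper pushforward in higher Chow theory, which translates exactly into the relations \eqref{eq:pf}; its compatibility with the remaining reciprocity relations is discussed below. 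Moreover $\Phi$ is surjective: the canonical $E$-rational point of $X_E$ above a closed point $x\in X_0$ with $E=F(x)$ defines a class $[x]\in CH_0(X_E)$, and $\Phi(\set{[x],u_1,\dots,u_n}_{E/F})$ is precisely the class of the symbol $\set{u_1,\dots,u_n}$ in the $x$-summand of \eqref{eq:MK-Chow}.

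To prove injectivity I would exhibit an inverse $\Psi$ defined on the presentation \eqref{eq:MK-Chow}, sending the symbol $\set{u_1,\dots,u_n}$ in the $x$-summand ($E=F(x)$) to $\set{[x],u_1,\dots,u_n}_{E/F}$ and extending $\Z$-linearly. This is well-defined on $K_n^M(E)$ because the Steinberg relation holds in the Somekawa $K$-group, as a consequence of reciprocity on $\bP^1_E$. Using the transfer structure of $\underline{CH}_0(X)$ together with \eqref{eq:pf}, an arbitrary symbol $\set{a,u_1,\dots,u_n}_{E/F}$ reduces to the case in which $a=[x]$ is a rational point; granting that $\Psi$ is well-defined, the identities $\Psi\circ\Phi=\mathrm{id}$ and $\Phi\circ\Psi=\mathrm{id}$ then follow on generators, giving the desired isomorphism.

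The main obstacle is the well-definedness of $\Psi$, that is, the vanishing $\Psi\circ\partial=0$ and, dually, the compatibility of $\Phi$ with the reciprocity relations. Concretely, for $y\in X_1$ with closure the curve $C=\overline{\set{y}}$ and function field $F(y)$, the boundary $\partial$ is the sum over the closed points of $C$ of the residue (tame symbol) maps on Milnor $K$-theory. The heart of the matter is to show that, for $\alpha\in K_{n+1}^M(F(y))$, the image $\Psi(\partial\alpha)$ is exactly a Weil-reciprocity relation for the one-variable function field $F(y)/F$, with the $\Gm$-factors receiving the tame symbols of $\alpha$ and the $\underline{CH}_0(X)$-factor receiving the specialized zero-cycle. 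This amounts to identifying the $K$-theoretic tame symbol with the Somekawa local symbol and, since $C$ may be singular, to controlling the boundary via the normalization of $C$; this local computation, which relies on $X$ being smooth projective and geometrically irreducible, is the technical core of the argument.
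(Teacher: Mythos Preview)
The paper does not give its own proof of this theorem: it is quoted verbatim from \cite[Theorem~6.1]{Akh04} and used as a black box, so there is nothing in the present paper to compare your argument against. Your outline is essentially the strategy carried out in \cite{Akh04}: define $\Phi$ by pushforward of the product, check surjectivity on generators, and build the inverse $\Psi$ on the Gersten-type presentation \eqref{eq:MK-Chow}, with the crux being the identification of the Kato boundary $\partial$ on $K_{n+1}^M(F(y))$ with a Somekawa reciprocity relation along the normalization of the curve $\overline{\{y\}}$. You have correctly located the technical core; what remains is precisely the local computation you flag at the end, and for that one really does need the detailed argument in \cite{Akh04} rather than a sketch.
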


Combining \eqref{eq:MK-Chow} with the description of Milnor $K$-groups over global fields (\cite[Chapter II, Theorem 2.1]{BT73}), 
we obtain the following results. 
Here, we recall that a global field is a finite extension of $\Q$ or a 
function field of one variable over a finite field (cf.~Notation). 

\begin{prop}[{\cite[Corollary 7.2]{Akh04}}]\label{prop:Akh04_7.2}
    Let $F$ be a global field and 
    let $X$ be a smooth quasi-projective and geometrically irreducible scheme over $F$ of $\dim(X) = d$. 
    \begin{enumerate}
        \item If $\ch(F) = 0$, then we have 
    $CH^{d+n}(X,n) = 
    \begin{cases} 
    torsion, & n = 2,\\
    2\text{-}torsion & n\ge 3.
    \end{cases}$
    \item If $\ch(F) >0$, then 
    $CH^{d+n}(X,n) = 
    \begin{cases} 
    torsion, & n = 2,\\
    0 & n\ge 3.
    \end{cases}$
    \end{enumerate}
\end{prop}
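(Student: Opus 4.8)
The plan is to read the answer off the cokernel presentation \eqref{eq:MK-Chow} directly, without analyzing the differential $\partial$ at all. Since $X$ is smooth, quasi-projective and geometrically irreducible, \eqref{eq:MK-Chow} identifies $CH^{d+n}(X,n)$ with
\[
\Coker\Bigl(\partial\colon \bigoplus_{x\in X_1}K_{n+1}^M(F(x))\longrightarrow \bigoplus_{x\in X_0}K_{n}^M(F(x))\Bigr),
\]
which in particular is a quotient of the degree-$0$ term $\bigoplus_{x\in X_0}K_n^M(F(x))$. Consequently it suffices to prove that this target group is torsion (for $n=2$), is $2$-torsion (for $n\ge 3$ with $\ch(F)=0$), or vanishes (for $n\ge 3$ with $\ch(F)>0$); each such property is inherited by an arbitrary quotient.

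For a closed point $x\in X_0$ the residue field $F(x)$ is finite over $F$, because $X$ is of finite type over $F$ and $x$ is $0$-dimensional; hence $F(x)$ is again a global field, of the same characteristic as $F$. Thus everything reduces to the structure of $K_n^M(E)$ for $E$ a global field, which is exactly the content of the Bass--Tate description of the Milnor ring \cite[Chapter II, Theorem 2.1]{BT73}.

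For $n\ge 3$ that theorem gives $K_n^M(E)\cong (\Z/2\Z)^{r_1(E)}$, where $r_1(E)$ is the number of real places of $E$; in particular $K_n^M(E)$ is $2$-torsion, and it vanishes whenever $E$ has no real places, which is automatic in positive characteristic. For $n=2$ I would instead use that $K_2^M(E)=K_2(E)$ is a torsion group for every global field $E$: in the tame-symbol localization sequence the subgroup $K_2(\cO_E)$ (resp. $K_2(\cO_{E,S})$ for a function field) is finite, while the tame symbols take values in a direct sum of multiplicative groups of finite residue fields, which is torsion, and an extension of a torsion group by a finite group is torsion. Forming $\bigoplus_{x\in X_0}$ preserves each of these properties.

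Combining the two steps concludes the argument in all cases. The scheme-theoretic input is purely formal --- it is worth emphasizing that neither the differential $\partial$ nor the $X_1$-term enters, since a cokernel is automatically a quotient of its target --- so the only real content is the structural theorem for the Milnor $K$-theory of global fields. Accordingly the main point to get right is the $n=2$ torsion statement, where the conclusion is weakest and rests on the finiteness of $K_2$ of the ring of ($S$-)integers together with the torsion-ness of the tame-symbol target.
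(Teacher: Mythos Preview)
Your argument is correct and follows exactly the approach the paper indicates in the sentence preceding the proposition: combine the cokernel presentation \eqref{eq:MK-Chow}, so that $CH^{d+n}(X,n)$ is a quotient of $\bigoplus_{x\in X_0}K_n^M(F(x))$, with the Bass--Tate structure theorem for the Milnor $K$-theory of global fields \cite[Chapter II, Theorem 2.1]{BT73}. Your added details for $n=2$ (finiteness of $K_2$ of the ring of ($S$-)integers together with the torsion tame-symbol target) are precisely the ingredients the paper later spells out in the proof of \autoref{cor:CH_FF}.
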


\subsection*{\texorpdfstring{$\fB_s$}{Bs}-field}
Following \cite[Chapter II, Section 3.3]{Kat78}, we introduce the notion of a \emph{$\fB_s$-field} 
and its properties. 

\begin{dfn}[{\cite[Chapter II, Section 3.3, Definition 3]{Kat78}}]
\label{def:Bs}
For an integer $s\ge 0$, 
a field $F$ is called a \textbf{$\fB_s$-field} (resp.~\textbf{$\fB_s(l)$-field} for a prime $l$) 
if for any finite extension $E/F$ and any further finite extension $E'/E$, 
we have $K_s^{M}(E)/N_{E'/E}(K_s^M(E')) = 0$ (resp.~$(K_s^{M}(E)/N_{E'/E}K_s^M(E'))\{l\} = 0$, 
where 
 $G\{l\}$ is the $l$-primary part of an abelian group $G$). 
\end{dfn}

Note that the quotient group 
$K_s^{M}(E)/N_{E'/E}K_s^M(E')$ is torsion of finite exponent, since $[E':E] = N_{E'/E}\circ \res_{E'/E}$. 
Therefore, $K_s^{M}(E)/N_{E'/E}K_s^M(E')\{l\} = 0$ implies 
$(K_s^{M}(E)/N_{E'/E}K_s^M(E'))/l^r = 0$ for any $r\ge 1$ and hence 
the norm map 
\begin{equation}
\label{eq:N/l^r}
    N_{E'/E}\colon K_s^M(E')/l^r \to K_s^M(E)/l^r
\end{equation}
is surjective. 

Recall that 
a field $F$ is called a \textbf{$C_s$-field} if, for every homogeneous polynomial $f(x_1,\ldots,x_n)$ of degree $d$ 
with coefficients in $F$, the equation $f(x_1,\ldots, x_n) = 0$ has a non-trivial zero 
in $F^n$ if $n > d^s$ (cf.\ \cite[Chapter II, Section 4.5]{SerreCG}). 
By \cite[Chapter I, Section 4, Proposition 1]{Kat77}, 
a $C_2$-field is a $\fB_2$-field.
By the norm residue isomorphism theorem (the Bloch--Kato conjecture) (cf.~\cite[Theorem~6.16]{Vo11}, \cite{Wei09}), 
the property $\fB_s(l)$ is related to the cohomological $l$-dimension $\cd_l$.

\begin{prop}[{\cite[Chapter II, Section 3.3, Remark 1]{Kat78}, \cite[Lemma 7]{KK86}}]\label{lem:KatRem1}
Let $F$ be a field of $\ch(F) = p\ge 0$. 
For any prime $l\neq p$, 
the field $F$  is a $\fB_s(l)$-field if and only if $\cd_l(F)\le s$. 
\end{prop}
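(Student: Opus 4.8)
The plan is to translate the entire statement into Galois cohomology via the norm residue isomorphism theorem (Bloch--Kato, \cite{Wei09}) and then prove the two implications separately. For any field $E$ with $\ch(E)\neq l$ there is a natural isomorphism $K_s^M(E)/l^r \xrightarrow{\sim} H^s(G_E,\mu_{l^r}^{\otimes s})$ carrying the norm map $N_{E'/E}$ to the corestriction $\Cor_{E'/E}$. Combined with \autoref{lem:Bsl}, this shows that $F$ is $\fB_s(l)$ if and only if $\Cor_{E'/E}\colon H^s(G_{E'},\mu_{l^r}^{\otimes s})\to H^s(G_E,\mu_{l^r}^{\otimes s})$ is surjective for every tower of finite extensions $E'/E/F$ and every $r\ge 1$. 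Since $l\neq p$, purely inseparable parts do not affect mod-$l$ cohomology (they have degree a power of $p$, invertible mod $l^r$), so throughout I may assume $E'/E$ separable and work with open subgroups $G_{E'}\subseteq G_E\subseteq G_F$.

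For the implication $\cd_l(F)\le s \Rightarrow \fB_s(l)$, fix finite $E'/E/F$ and set $M=\mu_{l^r}^{\otimes s}$, an $l$-torsion $G_E$-module. I would use the short exact sequence of $G_E$-modules $0\to K \to \operatorname{Ind}_{G_{E'}}^{G_E}M \xrightarrow{\pi} M \to 0$, where $\pi(g\otimes m)=gm$ and $K$ is again $l$-torsion. Shapiro's lemma identifies $H^i(G_E,\operatorname{Ind}_{G_{E'}}^{G_E}M)\cong H^i(G_{E'},M)$, and in degree $s$ identifies $\pi_\ast$ with $\Cor_{E'/E}$. The long exact cohomology sequence then gives $\Coker(\Cor_{E'/E})\hookrightarrow H^{s+1}(G_E,K)$. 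As $G_E\subseteq G_F$ is closed, $\cd_l(G_E)\le \cd_l(G_F)\le s$, so $H^{s+1}(G_E,K)=0$ and $\Cor_{E'/E}$ is surjective for every $r$; by \autoref{lem:Bsl} this is exactly $\fB_s(l)$.

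The converse $\fB_s(l)\Rightarrow \cd_l(F)\le s$ is the main point. The key step is that $\fB_s(l)$ forces $K_{s+1}^M(E)/l=0$ for every finite extension $E/F$. I would argue on generators: a nonzero symbol $\{a_0,a_1,\dots,a_s\}$ may be assumed to have $a_0\notin (E^\times)^l$, so $E'=E(a_0^{1/l})$ is a degree-$l$ extension in which $a_0$ becomes an $l$-th power. Every finite extension of $F$ is again $\fB_s(l)$, so by \autoref{lem:Bsl} the norm $N_{E'/E}\colon K_s^M(E')/l\to K_s^M(E)/l$ is surjective; writing $\{a_1,\dots,a_s\}\equiv N_{E'/E}(w)$ mod $l$ and applying the projection formula gives
\[
\{a_0\}\cup N_{E'/E}(w)=N_{E'/E}\bigl(\res_{E'/E}\{a_0\}\cup w\bigr)=N_{E'/E}\bigl(l\,\{a_0^{1/l}\}\cup w\bigr)\equiv 0 \pmod l.
\]
Hence every symbol vanishes mod $l$ and $K_{s+1}^M(E)/l=0$. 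By Bloch--Kato, $H^{s+1}(G_E,\mu_l^{\otimes(s+1)})=0$ for all finite $E/F$. Over $E(\mu_l)$ one has $\mu_l\cong\bF_l$, so $H^{s+1}(G_{E(\mu_l)},\bF_l)=0$, and since $[E(\mu_l):E]$ is prime to $l$ the restriction $H^{s+1}(G_E,\bF_l)\hookrightarrow H^{s+1}(G_{E(\mu_l)},\bF_l)$ is injective; thus $H^{s+1}(G_E,\bF_l)=0$ for every finite separable $E/F$. Passing to the fixed field of a pro-$l$ Sylow subgroup $S\subseteq G_F$, continuity gives $H^{s+1}(S,\bF_l)=\varinjlim_E H^{s+1}(G_E,\bF_l)=0$; as $S$ is pro-$l$, the standard criterion for pro-$l$ groups (\cite[Chapter I, Section 3]{SerreCG}) yields $\cd_l(S)\le s$, and therefore $\cd_l(F)=\cd_l(S)\le s$.

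The main obstacle is this converse, and within it the symbol computation: the real content is recognizing that surjectivity of norms in degree $s$ kills every symbol in $K_{s+1}^M(E)/l$ through the projection formula, and then correctly assembling the resulting mod-$l$ vanishing over all finite extensions into the single statement $\cd_l(F)\le s$ via the pro-$l$ Sylow subgroup and the prime-to-$l$ twist $E\rightsquigarrow E(\mu_l)$. Both directions rest on the norm residue isomorphism theorem and the compatibility of the Milnor-$K$ norm with Galois corestriction, which I would invoke rather than reprove.
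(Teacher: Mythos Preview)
The paper does not supply a proof of this proposition: it is stated with citations to \cite{Kat78} and \cite{KK86} and used as a black box. Your argument is correct and is essentially the standard one found in those references. The forward direction via the induced module and Shapiro's lemma is exactly the expected route; the converse---showing $\fB_s(l)$ forces $K_{s+1}^M(E)/l=0$ for every finite $E/F$ by the projection-formula trick with $E'=E(a_0^{1/l})$, then translating through Bloch--Kato and descending to a pro-$l$ Sylow subgroup---is likewise the natural argument. Note that the $l$-divisibility step you establish along the way is precisely what the paper records separately (and again only cites) as \autoref{lem:KatLem10}, so you have in effect absorbed that lemma into your proof. Everything checks out; there is nothing to correct.
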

From the above proposition, 
when $\ch(F) = 0$, 
$F$ is $\fB_s$ if and only if $\cd(F)\le s$. 
Note that if $\ch(F) = p>0$, then $\cd_p(F) \le 1$ (\cite[Proposition 6.5.10]{NSW08}). 

% \begin{rem}\label{rem:Acq}
% In an unpublished paper \cite[Thm.~1.1]{Acq05}, 
% the following criterion for a field of $\ch(F) = p>0$ is proved. 
% For a field $F$ of $\ch(F) = p>0$ with $[F:F^p]\le p^{s-1}$, 
% $F$ is a $\fB_s$-field if and only if $\cd(F) \le s$. 
% \end{rem}

\begin{prop}[{\cite[Chapter II, Section 3.3, Proposition 2]{Kat78}}]\label{prop:cdvf}
    Let $F$ be a complete discrete valuation field with residue field $k$. 
    For a prime $l$, and $s\ge 0$, 
    $F$ is a $\fB_{s+1}(l)$-field if and only if $k$ is a $\fB_s(l)$-field.
\end{prop}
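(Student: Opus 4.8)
The plan is to reduce, via \autoref{lem:Bsl}, both the hypothesis and the conclusion to the surjectivity of norm maps on Milnor $K$-groups modulo $l^r$, and then to pass between degree $s+1$ over $F$ and degree $s$ over $k$ by means of the tame symbol (residue) of the complete discrete valuation field. First I would record the dictionary between extensions: every finite extension $E/F$ is again a complete discrete valuation field whose residue field $\overline E$ is a finite extension of $k$, and every finite separable extension of $k$ arises as the residue field of a finite unramified extension of $F$. So, by \autoref{lem:Bsl}, $F$ is $\fB_{s+1}(l)$ exactly when $N_{E'/E}\colon K_{s+1}^M(E')/l^r\to K_{s+1}^M(E)/l^r$ is surjective for all finite $E'/E/F$ and all $r\ge 1$, and $k$ is $\fB_s(l)$ exactly when the corresponding statement holds in degree $s$ for the residue fields.

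The bridge between the two is the tame symbol, which for a complete discretely valued field $E$ sits in a split short exact sequence
\[
0 \to U^1K_{s+1}^M(E) \to K_{s+1}^M(E) \xrightarrow{(s_\pi,\,\partial)} K_{s+1}^M(\overline E)\oplus K_s^M(\overline E)\to 0,
\]
where $s_\pi$ is the specialization attached to a uniformizer $\pi$ and $\partial$ the residue map, both of which are compatible with norm maps along unramified extensions (and with the residue norm on the right). The component $\partial$ realizes the degree shift $s+1\rightsquigarrow s$ that matches the statement. When $l\neq\ch(k)$ the unit subgroup $U^1K_{s+1}^M(E)$ is $l$-divisible, so this sequence becomes an isomorphism modulo $l^r$ and one reads off the equivalence directly; even more cleanly, $l\neq\ch(k)$ forces $l\neq\ch(F)$, so \autoref{lem:KatRem1} applies to both $F$ and $k$ and the whole claim reduces to the standard identity $\cd_l(F)=\cd_l(k)+1$ for complete discretely valued fields (cf.~\cite[Chapter II]{SerreCG}).

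The remaining and genuinely hard case is the wild one, $l=\ch(k)=p$, where \autoref{lem:KatRem1} is unavailable and $U^1K_{s+1}^M(E)$ fails to be $l$-divisible. Here I would follow Kato's analysis of the unit filtration: the graded pieces $\gr^iK_{s+1}^M(E)/p$ for $i\ge 1$ are described in terms of the differential modules $\Omega^s_{\overline E}$ and $\Omega^{s-1}_{\overline E}$ of the residue field, and the task is to show that each of these pieces is exhausted by norms precisely when $k$ is $\fB_s(p)$, so that the wild contribution produces no obstruction beyond the tame (degree-$s$) one. I expect this differential-form bookkeeping — together with the verification that the residue and specialization maps remain norm-compatible across \emph{ramified} extensions — to be the main obstacle, and it is exactly the technical content of \cite[Chapter II, Proposition 2]{Kat78}.
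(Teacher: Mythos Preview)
The paper gives no proof of this proposition; it simply records the result with a citation to \cite[Chapter II, Proposition 2]{Kat78}. Your outline is therefore a sketch of Kato's argument rather than something to compare against the authors' own reasoning, and you acknowledge as much in your closing sentence.

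As a sketch it is sound. For $l\neq\ch(k)$ the shortcut via \autoref{lem:KatRem1} and the standard identity $\cd_l(F)=\cd_l(k)+1$ for complete discretely valued fields is indeed the cleanest route and dispatches that case entirely. For $l=p=\ch(k)$ you correctly isolate the unit filtration on $K_{s+1}^M$ and its graded pieces in terms of $\Omega^s_{\overline E}$ and $\Omega^{s-1}_{\overline E}$ as the substantive content, and you are honest that the details live in Kato's paper. One minor caution: your dictionary only lifts \emph{separable} extensions of $k$ to unramified extensions of $F$, whereas the $\fB_s(p)$ condition on $k$ must be checked against all finite extensions, including purely inseparable ones; handling these is part of what you have (appropriately) deferred.
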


\begin{lem}[{\cite[Chapter II, Section 3.3, Lemma 10]{Kat78}}]\label{lem:KatLem10}
    If $F$ is a $\fB_s(l)$-field, then it is $\fB_n(l)$ for all $n\ge s$ and 
    $K_n^M(F)$ is $l$-divisible for any $n> s$.
\end{lem}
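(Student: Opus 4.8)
The plan is to derive both assertions from the $l$-divisibility of the top Milnor $K$-groups, which is where the real content lies. First I would record the easy bookkeeping: by \autoref{def:Bs}, a finite extension of a $\fB_s(l)$-field is again $\fB_s(l)$, since a finite extension of such an intermediate field $E$ is a finite extension of $F$. This lets me work over an arbitrary finite extension of $F$ whenever convenient. I also note throughout that $l$-divisibility of an abelian group $G$ is equivalent to $G/l=0$, and implies $G/l^r=0$ for all $r$.

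The core step is to show that $K_n^M(F)$ is $l$-divisible for $n>s$. Since Milnor symbols are multiplicative, every length-$n$ symbol factors as $\{a_1,\dots,a_{s+1}\}\cdot\{a_{s+2},\dots,a_n\}$, so it suffices to prove that every length-$(s+1)$ symbol lies in $lK_{s+1}^M(F)$; then $K_{s+1}^M(F)=lK_{s+1}^M(F)$ and the factorization propagates $l$-divisibility to all $n>s$. For this I would take a symbol $\{a_0,a_1,\dots,a_s\}$ and adjoin an $l$-th root, setting $E'=F(b)$ with $b^l=a_0$. If $[E':F]=1$ then $a_0\in(F^\times)^l$ and $\{a_0,\dots,a_s\}=l\{b,a_1,\dots,a_s\}$, so we may assume $[E':F]=l$. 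Applying the hypothesis $\fB_s(l)$ through \autoref{lem:Bsl} with $r=1$, the norm $N_{E'/F}\colon K_s^M(E')/l\to K_s^M(F)/l$ is surjective, so there exist $\gamma\in K_s^M(E')$ and $\delta\in K_s^M(F)$ with $\{a_1,\dots,a_s\}=N_{E'/F}(\gamma)+l\delta$. Using the projection formula together with $\res_{E'/F}\{a_0\}=\{b^l\}=l\{b\}$,
\[
\{a_0\}\cdot N_{E'/F}(\gamma)=N_{E'/F}\bigl(\res_{E'/F}\{a_0\}\cdot\gamma\bigr)=l\,N_{E'/F}\bigl(\{b\}\cdot\gamma\bigr)\in lK_{s+1}^M(F).
\]
Multiplying $\{a_1,\dots,a_s\}=N_{E'/F}(\gamma)+l\delta$ by $\{a_0\}$ then gives $\{a_0,a_1,\dots,a_s\}\in lK_{s+1}^M(F)$, as required.

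Finally I would deduce the $\fB_n(l)$ assertion for $n\ge s$. The case $n=s$ is the hypothesis. For $n>s$, every finite extension $E/F$ is $\fB_s(l)$ by the first step, hence $K_n^M(E)$ is $l$-divisible by what was just proved, so $K_n^M(E)/l^r=0$ for all $r$. The norm map $N_{E'/E}\colon K_n^M(E')/l^r\to K_n^M(E)/l^r$ then has zero target and is vacuously surjective, so $F$ is $\fB_n(l)$ by \autoref{lem:Bsl}.

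The one genuinely clever move—and the main thing to get right—is the choice of the Kummer extension $E'=F(a_0^{1/l})$, arranged precisely so that restricting the leading factor produces a factor of $l$ that the projection formula pulls out of the entire symbol, while $\fB_s(l)$ supplies the rewriting of the remaining length-$s$ tail as a norm from $E'$ modulo $l$. The only delicate point is avoiding circularity: one must not invoke $\fB_{s+1}(l)$ while proving it. This is automatic here, since the argument uses only the level-$s$ hypothesis and the multiplicativity of symbols, and the passage from $n=s+1$ to larger $n$ is purely formal.
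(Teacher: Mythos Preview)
Your argument is correct. The paper does not supply its own proof of \autoref{lem:KatLem10}; it simply cites \cite[Chapter~II, Section~3.3, Lemma~10]{Kat78}, and your Kummer-extension plus projection-formula argument is exactly the standard proof one finds in Kato's work, so there is nothing to compare beyond noting that you have reproduced the classical reasoning faithfully.
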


Recall that, for $N\ge 0$, 
an \textbf{$N$-dimensional local field $F$} is a complete discrete valuation field whose residue field $k$ is an $(N-1)$-local field, and $0$-dimensional local field is a finite field. 
For an $N$-dimensional local field $F$, 
there is the tuple of its consecutive residue fields
$( F= F_N, k = F_{N-1}, F_{N-2}, \ldots, F_1, F_0)$. 
The field $F_0$ is a finite field and is called the \textbf{last residue field} of $F$. 
% Throughout this section, we consider 
% an $N$-dimensional local field $F$ with $N\ge 1$. 
% We denote by $k = F_1$ the residue field of $F$. 

\begin{prop}[{\cite[Chapter II, Section~3.3, Corollary (to Proposition 2)]{Kat78}}]\label{prop:Kato}
Let $F$ be an $N$-dimensional local field. 
Then, $F$ is a $\fB_{N+1}$-field. 
For any $n \ge N+2$, $K_n^M(F)$ is divisible. 
\end{prop}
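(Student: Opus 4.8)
The plan is to establish the $\fB_{N+1}$ property by induction on $N$, and then to read off the divisibility of $K_n^M(F)$ from \autoref{lem:KatLem10}. The base case $N = 0$ is the statement that a finite field is $\fB_1$; since $K_1^M(E) = E^\times$, this is exactly the surjectivity of the norm map on the multiplicative groups of finite fields, already recorded in the introduction. Throughout I would use the preliminary observation that a field is $\fB_s$ if and only if it is $\fB_s(l)$ for every prime $l$: indeed, the quotient $K_s^M(E)/N_{E'/E}K_s^M(E')$ is torsion (as noted after \autoref{def:Bs}), and a torsion abelian group is the direct sum of its $l$-primary parts, so it vanishes precisely when every $l$-primary part does.

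For the inductive step I would fix $N \geq 1$ and assume that every $(N-1)$-dimensional local field is $\fB_N$. Writing $F$ as a complete discrete valuation field with residue field $k$, where $k$ is by definition an $(N-1)$-dimensional local field, the inductive hypothesis gives that $k$ is $\fB_N$, hence $\fB_N(l)$ for every prime $l$ by the observation above. Applying \autoref{prop:cdvf} separately to each prime $l$ shows that $F$ is $\fB_{N+1}(l)$ for every $l$, and the observation then upgrades this to $F$ being $\fB_{N+1}$. This completes the induction and the first assertion.

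For the second assertion, $F$ is $\fB_{N+1}(l)$ for every prime $l$, so \autoref{lem:KatLem10} with $s = N+1$ gives that $K_n^M(F)$ is $l$-divisible for every $n > N+1$ and every prime $l$. An abelian group that is $l$-divisible for all primes $l$ is divisible, since divisibility by an arbitrary positive integer reduces, by factoring into primes and dividing successively, to divisibility by each prime. Hence $K_n^M(F)$ is divisible for all $n \geq N+2$.

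Since the real content is carried by \autoref{prop:cdvf} and \autoref{lem:KatLem10}, I do not expect a substantive obstacle in assembling the argument; the points demanding care are the translation between the $\fB_s$ and $\fB_s(l)$ conditions via the torsion remark, and invoking \autoref{prop:cdvf} uniformly over all primes --- including the residue characteristic --- which is permissible because that proposition carries no restriction on $l$.
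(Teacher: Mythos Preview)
Your argument is correct. The paper does not supply its own proof of \autoref{prop:Kato} but simply cites it as a corollary in \cite{Kat78}; your induction on $N$ via \autoref{prop:cdvf}, together with the passage between $\fB_s$ and $\fB_s(l)$ through the torsion remark and the final appeal to \autoref{lem:KatLem10}, is precisely the derivation one expects from the cited ``Corollary (to Proposition 2)'' and matches the surrounding results the paper has already imported from Kato.
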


Let $F$ be a number field, that is, a finite extension of $\Q$. 
It is known that $\cd_l(F) \le 2$ if $F$ is totally imaginary, or $l\neq 2$ (\cite[{Chapter II}, Section 4.4, Proposition 13]{SerreCG}). 
By \autoref{lem:KatRem1}, 
$F$ is a $\fB_2(l)$-field for any prime $l\neq 2$. 
Moreover, when $F$ is totally imaginary, $F$ is a $\fB_2$-field. 
In the same way, 
if $F$ is a function field of one variable over a finite field of $\ch(F) = p$ 
then we have $\cd(F) = 2$ (\cite[{Chapter II}, Section 4.2, Corollary]{SerreCG}) and hence $F$ is $\fB_2(l)$ for any $l\neq p$. 
For the prime $p = \ch(F)$, 
it is known that $K_2^M(F)$ is $p$-divisible (\cite[Chapter I, Corollary 5.13]{BT73}).
Hence, $\fB_2(p)$ holds trivially.

\begin{lem}\label{lem:global_B2}
    A totally imaginary number field, as well as a function field of one variable over a finite field, 
    is a $\fB_2$-field.
\end{lem}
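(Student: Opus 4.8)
The plan is to reduce the whole statement to the cohomological criterion of \autoref{lem:KatRem1}, handling the characteristic prime separately. First I would record the elementary reduction that a field $F$ is $\fB_2$ if and only if it is $\fB_2(l)$ for every prime $l$. This follows from \autoref{def:Bs} together with the remark that each quotient $K_2^M(E)/N_{E'/E}K_2^M(E')$ is torsion: a torsion abelian group vanishes exactly when all of its $l$-primary parts vanish, so the vanishing of the full quotient is equivalent to the vanishing of all its $l$-primary parts. It therefore suffices to establish $\fB_2(l)$ for each prime $l$ individually, and this is what I would do for the two families of fields.

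For a totally imaginary number field $F$ we have $\ch(F) = 0$, so \autoref{lem:KatRem1} is available for every prime $l$. Serre's computation of the cohomological dimension of a number field gives $\cd_l(F) \le 2$ for all $l$, the totally imaginary hypothesis being precisely what is needed to cover the case $l = 2$ (\cite[Section 4.4, Proposition 13]{SerreCG}). Hence $F$ is $\fB_2(l)$ for every $l$ by \autoref{lem:KatRem1}, and consequently $\fB_2$.

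For a function field $F$ of one variable over a finite field, write $p = \ch(F) > 0$. For each prime $l \neq p$ I would invoke $\cd(F) = 2$ (\cite[Section 4.2, Corollary]{SerreCG}), so that $\cd_l(F) \le 2$ and \autoref{lem:KatRem1} yields $\fB_2(l)$. The one genuinely separate case is $l = p$, where \autoref{lem:KatRem1} does not apply. Here the idea is that every finite extension $E/F$ is again a function field of one variable over a finite field of characteristic $p$, so $K_2^M(E)$ is $p$-divisible by \cite[Chapter I, Corollary 5.13]{BT73}; hence $K_2^M(E)/p^r = 0$ for all $r \ge 1$. The norm map $N_{E'/E}\colon K_2^M(E')/p^r \to K_2^M(E)/p^r$ is then a map between zero groups, so trivially surjective for all $r$ and all finite $E'/E/F$, and \autoref{lem:Bsl} gives $\fB_2(p)$. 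Combining the two ranges of primes shows $F$ is $\fB_2(l)$ for all $l$, hence $\fB_2$.

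The argument is in essence a bookkeeping exercise organized around \autoref{lem:KatRem1}, and I expect no serious difficulty in the number-field case. The one point demanding care is the characteristic prime $l = p$ in the function-field setting, where the cohomological criterion is unavailable; there one must instead route the $p$-divisibility of $K_2^M$ through the norm-surjectivity reformulation of \autoref{lem:Bsl}, being careful to apply it to \emph{all} finite subextensions rather than to $F$ alone.
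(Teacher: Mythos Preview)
Your proposal is correct and follows essentially the same route as the paper: reduce to $\fB_2(l)$ for each prime $l$, invoke the cohomological-dimension criterion (\autoref{lem:KatRem1}) together with Serre's bounds for $l$ away from the characteristic, and handle $l = p$ in the function-field case via Bass--Tate $p$-divisibility of $K_2^M$. Your treatment of the $l = p$ case is in fact slightly more careful than the paper's, which simply asserts that $\fB_2(p)$ holds trivially without spelling out the passage through \autoref{lem:Bsl} or the need to apply $p$-divisibility to every finite extension.
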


More generally, 
if $F$ is a finitely generated extension of a finite field $\bF_q$ 
of transcendence degree $N$, 
then $\cd_l(F) \le N + 1$ and hence $F$ is $\fB_{N+1}(l)$ for any prime $l\neq \ch(F)$.

%%%%%%%%%%%%%%%%
%%%%%%%%%%%%%%%%
\section{Higher Chow groups}
\label{sec:CH}

In this section we prove our main theorem (\autoref{thm:main} and \autoref{cor:main}), 
describing 
the higher Chow group $CH^{\dim(X)+s}(X, s)$ 
as the direct sum of $K_s^M(F)$ and a divisible group 
for a smooth projective variety $X$ over such a $\fB_s$-field $F$.

\subsection*{Main theorem}
To prove our main theorems (\autoref{thm:main}, \autoref{cor:main}), 
we first establish several lemmas. 

\begin{lem}
\label{lem:norm}
    Let $\cM$ be a Mackey functor over a  $\fB_s(l)$-field $F$ for some $s\ge 0$ and a prime $l$. 
    For any finite extension $F'/F$, the transfer map 
    \[
    \tr_{F'/F}\colon (\cM\otimesM \Gm^{\otimesM s})(F')/l^r \to (\cM\otimesM \Gm^{\otimesM s})(F)/l^r
    \]
    is surjective for any $r\ge 1$. 
    Here, $\Gm^{\otimesM 0}$ is defined to be the unit $\Z$ of the tensor structure in the category of Mackey functors. 
    Moreover, the transfer map for $(\cM\otimesM K_{s}^M)/l^r$ is surjective as well. 
\end{lem}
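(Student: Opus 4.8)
The plan is to prove surjectivity one generator at a time. Since the transfer is additive and $(\cM\otimesM\Gm^{\otimesM s})(F)$ is generated by the symbols $\set{x,u_1,\ldots,u_s}_{E/F}$ with $E/F$ finite, $x\in\cM(E)$ and $u_i\in E^\times$, it suffices to place each such symbol in the image of $\tr_{F'/F}$ modulo $l^r$. First I would reduce to a convenient ground field: as $E/F$ is finite and $F$ is $\fB_s(l)$, so is $E$, and I choose a finite extension $E'/E$ containing a copy of $F'$, taking $E'$ to be one of the residue fields of the local Artinian factors of $E\otimes_F F'$ and using additivity of $\res_{F'/F}$ to reduce to a single factor. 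With $F'\subseteq E'$ fixed, any symbol $\set{\cdots}_{E'/F'}$ is a generator over $F'$ whose transfer $\tr_{F'/F}$ merely relabels it as $\set{\cdots}_{E'/F}$.

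The core mechanism is the interplay of \autoref{lem:Bsl} with the projection formula \eqref{eq:pf}. By \autoref{lem:Bsl} the norm $N_{E'/E}\colon K_s^M(E')/l^r\to K_s^M(E)/l^r$ is surjective, so modulo $l^r$ the class of $\{u_1,\ldots,u_s\}$ is a sum of norms $N_{E'/E}(\beta_j)$ with $\beta_j\in K_s^M(E')$. Viewing $\cM\otimesM\Gm^{\otimesM s}$ as the two-fold product $\cM\otimesM(\Gm^{\otimesM s})$ (associativity of $\otimesM$) and writing $\beta_j$ as a diagonal symbol $\zeta_j\in(\Gm^{\otimesM s})(E')$, the projection formula applied in the single $(\Gm^{\otimesM s})$-slot gives
\[
\set{x,\tr_{E'/E}(\zeta_j)}_{E/F}=\set{\res_{E'/E}x,\zeta_j}_{E'/F}=\tr_{F'/F}\!\left(\set{\res_{E'/E}x,\zeta_j}_{E'/F'}\right),
\]
where $\tr_{E'/E}$ denotes the Mackey transfer on $\Gm^{\otimesM s}$; thus each of these lies in $\img(\tr_{F'/F})$. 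For the statement about $(\cM\otimesM K_s^M)/l^r$ the same computation applies verbatim with $\beta_j\in K_s^M(E')$ and $\tr_{E'/E}$ the Milnor norm, and since no Steinberg relations intervene this already finishes that case; alternatively it follows from the $\Gm^{\otimesM s}$-case by a diagram chase along the canonical surjection $q\colon\Gm^{\otimesM s}\twoheadrightarrow K_s^M$, which is compatible with transfers.

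For the raw product the two elements above agree only \emph{modulo} $\ker q$, which is generated by Steinberg symbols, and these are not a priori trivial in the Mackey product; overcoming this is the main obstacle. Concretely, I must show $\set{x,\kappa}_{E/F}\in\img(\tr_{F'/F})$ modulo $l^r$ for every $\kappa\in\ker\big(q_E\colon(\Gm^{\otimesM s})(E)\to K_s^M(E)\big)$, i.e.\ that Steinberg symbols become transfers after reduction mod $l^r$. Here I would combine two devices: the identity $\tr_{E'/E}\circ\res_{E'/E}=[E':E]$ shows that $[E':E]\set{x,\cdots}_{E/F}\in\img(\tr_{F'/F})$, which disposes of the prime-to-$l$ part of $[E':E]=[EF':E]$ and leaves only an $l$-power-torsion obstruction; and for that $l$-part I would exploit that a Steinberg factor is itself a norm, $1-a=N_{E(\mu_{l^r},\,a^{1/l^r})/E}(1-a^{1/l^r})$, reducing a Steinberg symbol to a one-factor norm to which \eqref{eq:pf} applies after enlarging $E'$ to contain $F'$. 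The delicate point, and the step I expect to require the most care, is making this last reduction compatible with the \emph{fixed} extension $F'/F$ — so that the resulting transfer is genuinely $\tr_{F'/F}$ rather than a transfer from an unrelated field — which I anticipate handling through the Bass--Tate presentation of $K_s^M(E')$, computing the Milnor norm one factor at a time entirely within the reach of the projection formula.
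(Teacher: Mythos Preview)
Your argument for the $K_s^M$-case is correct and is essentially the paper's: choose a compositum $E'=EF'$, use surjectivity of the Milnor norm modulo $l^r$ from \autoref{lem:Bsl}, and apply the projection formula \eqref{eq:pf}. The paper additionally reduces to $r=1$ via a d\'evissage with the five lemma, but your direct computation for general $r$ works fine in that case.

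The genuine gap is in the $\Gm^{\otimesM s}$-case, and you have correctly located it: the discrepancy between the Mackey transfer on $\Gm^{\otimesM s}$ and the Milnor norm on $K_s^M$, i.e.\ the kernel of $q\colon \Gm^{\otimesM s}\twoheadrightarrow K_s^M$. Your proposed fixes do not close it. The identity $\tr\circ\res=[E':E]$ only helps when $[EF':E]$ is prime to $l$. Your treatment of the $l$-part is heuristic: the norm formula $1-a=N_{E(\mu_{l^r},\,a^{1/l^r})/E}(1-a^{1/l^r})$ introduces an auxiliary extension unrelated to $F'$, and, as you yourself concede, realigning it with the \emph{fixed} $F'$ is the entire difficulty; invoking a Bass--Tate factorization does not tell you why each step lands inside $\img(\tr_{F'/F})$. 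Moreover $\Ker(q_E)$ is not generated by diagonal Steinberg symbols at level $E$ alone, but by symbols over all finite $L/E$, and nothing you propose controls those.

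The paper bypasses all of this with a single external input: after reducing to $r=1$, it invokes the isomorphism of Mackey functors
\[
\Gm^{\otimesM s}/l \;\xrightarrow{\ \simeq\ }\; K_s^M/l
\]
from \cite[Theorem~4.5]{Hir24}, whence $(\cM\otimesM\Gm^{\otimesM s})/l\simeq\cM/l\otimesM K_s^M/l$. This collapses the $\Gm^{\otimesM s}$-case to the $K_s^M$-case you have already handled. In other words, the ``delicate point'' you anticipate is precisely the content of that theorem, and the paper treats it as a black box rather than reproving it inside the lemma.
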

\begin{proof}
Consider the following commutative diagram with exact rows:
\[
\xymatrix{
(\cM\otimesM \Gm^{\otimesM s})(F')/l^r\ar[d] \ar[r] & (\cM\otimesM \Gm^{\otimesM s})(F')/l^{r+1}\ar[d] \ar[r] &  (\cM\otimesM \Gm^{\otimesM s})(F')/l \ar[r]\ar[d] & 0\\
(\cM\otimesM \Gm^{\otimesM s})(F)/l^r \ar[r] &  (\cM\otimesM \Gm^{\otimesM s})(F)/l^{r+1} \ar[r] &  (\cM\otimesM \Gm^{\otimesM s})(F)/l \ar[r] & 0.
}
\]
An induction on $r$, using a diagram chase in the above
commutative diagram with right exact rows, reduces the assertion
to the case $r=1$.

Recalling from \autoref{ex:MFexs} (i), 
the Mackey functor $(\cM\otimesM \Gm^{\otimesM s})/l$ is the cokernel of the morphism $l\colon \cM\otimesM \Gm^{\otimesM s} \to \cM\otimesM \Gm^{\otimesM s}$ given by the multiplication by $l$-map. 
By the right exactness of the product $-\otimesM \Gm^{\otimesM s}$, 
there is an isomorphism $(\cM\otimesM \Gm^{\otimesM s})/l \simeq \cM/l \otimesM \Gm^{\otimesM s}$. 
Since $\cM/l$ is annihilated by $l$, we have $\cM/l \otimesM \Gm^{\otimesM s} \simeq \cM/l \otimesM \Gm^{\otimesM s}/l$. 
By \cite[Theorem~4.5]{Hir24}, 
we have 
\[
(\cM\otimesM \Gm^{\otimesM s})/l \simeq \cM/l\otimesM \Gm^{\otimesM s}/l \simeq \cM/l\otimesM K_{s}^M/l.
\]
To show that the map 
$\tr_{F'/F}\colon (\cM/l\otimesM K_{s}^M/l)(F') \to (\cM/l\otimesM K_{s}^M/l)(F)$ is surjective, 
take a symbol $\set{x,\alpha}_{E/F}$ in $(\cM/l\otimesM K_{s}^M/l)(F)$ for $x\in \cM(E)/l$ and $\alpha \in K_{s}^M(E)/l$. 
We put $E' = EF'$. 
From the assumption on $F$, 
the norm $N_{E'/E} \colon K_{s}^M(E')/l \to K_{s}^M(E)/l$ is surjective (cf.~\eqref{eq:N/l^r}). 
There exists $\alpha' \in K_{s}^M(E')/l$ such that $N_{E'/E}(\alpha') = \alpha$. 
By the projection formula (cf.~\eqref{eq:pf}), we have 
\begin{align*}
      \set{x,\alpha}_{E/F} &= \set{x,N_{E'/E}(\alpha')}_{E/F} \\
      &= \set{\res_{E'/E}(x), \alpha'}_{E'/F} \\
      &= \tr_{F'/F}(\set{\res_{E'/E}(x), \alpha'}_{E'/F'}).
\end{align*}
Since $(\cM/l\otimesM K_{s}^M/l)(F)$ is generated by the symbols of the form $\set{x,\alpha}_{E/F}$ 
for some $x \in \cM(E)/l$ and $\alpha \in K_s^M(E)/l$ and an extension $E/F$, 
the above equalities imply that 
$\tr_{F'/F}\colon (\cM/l\otimesM K_{s}^M/l)(F') \to (\cM/l\otimesM K_{s}^M/l)(F)$ 
is surjective. 
\end{proof}

\begin{lem}
\label{cor:div}
    Let $G$ be a semi-abelian variety over a $\fB_s(l)$-field $F$ for some $s\ge 0$ and a prime $l$. 
    Then, $(G\otimesM \Gm^{\otimesM n})(F)$ is $l$-divisible for $n \ge s$.
\end{lem}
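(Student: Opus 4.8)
The plan is to reduce the assertion to a mod-$l$ vanishing, dispatch the range $n\ge s+1$ for free, and then attack the critical top degree $n=s$ by a Kummer-theoretic comparison with Galois cohomology. Since an abelian group is $l$-divisible exactly when its quotient by $l$ vanishes, I would prove $(G\otimesM\Gm^{\otimesM n})(F)/l=0$. Arguing exactly as in the proof of \autoref{lem:norm}, the right exactness of $-\otimesM\Gm^{\otimesM n}$ together with \cite[Theorem~4.5]{Hir24} gives an isomorphism $(G\otimesM\Gm^{\otimesM n})(F)/l\simeq (G/l\otimesM K_n^M/l)(F)$. For $n\ge s+1$ this vanishes trivially: every finite extension $E/F$ is again $\fB_s(l)$, so $K_n^M(E)/l=0$ by \autoref{lem:KatLem10}, and $(G/l\otimesM K_n^M/l)(F)$ is generated by symbols whose $K$-theoretic entry already vanishes. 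This isolates the critical case $n=s$, where $K_s^M/l$ is genuinely nonzero.

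For $n=s$, assume first $l\neq\ch(F)$. The Kummer sequence $0\to G[l]\to G\xrightarrow{l}G\to 0$ of \'etale sheaves yields an injection of Mackey functors $G/l\hookrightarrow \underline{H}^1(G[l])$, where $\underline{H}^1(G[l])$ is the Mackey functor $E\mapsto H^1(E,G[l])$ and $G[l]$ is finite \'etale and killed by $l$. Cupping with the norm-residue identification $K_s^M/l\simeq \underline{H}^s(\mu_l^{\otimes s})$ produces a generalized Galois symbol
\[
(G/l\otimesM K_s^M/l)(F)\longrightarrow H^{s+1}\!\left(F,\,G[l]\otimes\mu_l^{\otimes s}\right),
\]
sending $\set{x,\alpha}_{E/F}$ to $\Cor_{E/F}(\delta(x)\cup\alpha)$, where $\delta$ is the Kummer map. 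Because $F$ is $\fB_s(l)$ we have $\cd_l(F)\le s$ (\autoref{lem:KatRem1}), so the target, an $H^{s+1}$ with $l$-torsion coefficients, vanishes. It therefore suffices to prove that this Galois symbol is \emph{injective}; for $s=0$ it is precisely the Kummer injection, so this is the natural generalization of that case.

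The injectivity is the heart of the matter, and it is exactly where the mere right exactness of $\otimesM$ is not enough. For the torus part the difficulty evaporates: by Hilbert 90 the map $\Gm/l\to\underline{H}^1(\mu_l)$ is an isomorphism, so choosing a finite Galois extension $L/F$ splitting the torus gives $(\text{torus}\otimesM\Gm^{\otimesM s})(L)/l\simeq\bigoplus K_{s+1}^M(L)/l=0$ by \autoref{lem:KatLem10}, and this vanishing descends to $F$ through the surjective transfer of \autoref{lem:norm}. The genuine obstacle is the abelian part, where $A/l\hookrightarrow\underline{H}^1(A[l])$ has nonzero cokernel $\underline{H}^1(A)[l]$: passing to a finite $L/F$ trivializing $A[l]$ (with $\mu_l\subset L$) only \emph{embeds} $A/l$ into $(\Gm/l)^{\oplus 2g}$ rather than identifying them, so the induced
\[
(A/l\otimesM K_s^M/l)(L)\longrightarrow\bigl((\Gm/l)^{\oplus 2g}\otimesM K_s^M/l\bigr)(L)=\bigoplus K_{s+1}^M(L)/l=0
\]
need not be injective a priori. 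Establishing this injectivity—equivalently, the injectivity of the generalized Galois symbol above—via the comparison machinery for Mackey products from the cited works is the technical core, and it is the step where I expect the real difficulty to lie; the vanishing over $F$ then follows once more by the surjective transfer of \autoref{lem:norm}.

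Finally, the case $l=\ch(F)=p$ I would treat by the same d\'evissage, replacing the \'etale Kummer sequence by its flat (equivalently, logarithmic de Rham--Witt) counterpart. Here $\cd_p(F)\le 1$ always, so only small $s$ occur and the torus computation is already unconditional, leaving the abelian part to be handled by the flat-cohomological analogue of the Galois symbol.
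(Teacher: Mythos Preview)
Your plan has a genuine gap precisely where you flag the ``real difficulty'': you do not prove the injectivity of the generalized Galois symbol $(A/l\otimesM K_s^M/l)(F)\to H^{s+1}(F,A[l]\otimes\mu_l^{\otimes s})$, and this injectivity is in general a hard problem (indeed, for $s=1$ it is tied to deep questions about cycle maps on products of curves in the spirit of \cite{RS00}). Without it, the vanishing of the $H^{s+1}$ target (from $\cd_l(F)\le s$) tells you nothing about the source. So as written the proposal does not establish the lemma.

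More importantly, you are missing the elementary argument that renders the Galois-cohomological machinery unnecessary. The key point is that for a semi-abelian variety $G$, the multiplication-by-$l$ map $[l]\colon G\to G$ is an isogeny, hence surjective on $\overline{F}$-points; thus any $x\in G(E)$ acquires an $l$-th root after a \emph{finite} extension $E':=E([l]^{-1}x)$. Now use the $\fB_s(l)$ hypothesis directly: the norm $N_{E'/E}\colon K_s^M(E')/l\to K_s^M(E)/l$ is surjective, so write $\alpha=N_{E'/E}(\alpha')$ and apply the projection formula to get
\[
\{x,\alpha\}_{E/F}=\{x,N_{E'/E}(\alpha')\}_{E/F}=\{\res_{E'/E}(x),\alpha'\}_{E'/F}=\{l x',\alpha'\}_{E'/F}=0
\]
in $(G/l\otimesM K_s^M/l)(F)$. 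This kills every generating symbol in one stroke, uniformly in $l$ (including $l=\ch(F)$), without any cohomological comparison or d\'evissage into torus and abelian parts. Your reduction to $(G/l\otimesM K_s^M/l)(F)=0$ is correct; it is only the attack on this vanishing that should be replaced.
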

\begin{proof}
By the associativity of the Mackey products 
and the isomorphism $ \Gm^{\otimesM n}/l \simeq K_{n}^M/l$ (\cite[Theorem~4.5]{Hir24}), we have 
\begin{align*}
    (G\otimesM \Gm^{\otimesM n})(F)/l &\simeq ((G\otimesM \Gm^{\otimesM s}) \otimesM \Gm^{\otimesM (n-s)})(F)/l \\
    &\simeq ((G/l \otimesM K_s^M/l) \otimesM \Gm^{\otimesM (n-s)}/l)(F).
\end{align*}
Hence, it suffices to show $G/l \otimesM K_s^M/l = 0$. 
It suffices to show
$(G/l\otimesM K_s^M/l)(F)=0$.
Indeed, every finite extension $E/F$ is again a $\fB_s(l)$-field, so the same vanishing holds over $E$. 
Take any symbol $\set{x, \alpha}_{E/F}$ in $(G/l\otimesM K_s^M/l)(F)$. 
Let $[l]: G\to G$ be the multiplication by $l$ map on $G$ and 
put $E' = E([l]^{-1}x)$. 
From the assumption on $F$, 
the norm $N_{E'/E}\colon K_{s}^M(E')/l\to K_{s}^M(E)/l$ is surjective. 
There exists $\alpha' \in K_{s}^M(E')/l$ such that $N_{E'/E}(\alpha') = \alpha$. 
By the projection formula, we have 
\begin{align*}
    \set{x, \alpha}_{E/F} &= \set{x, N_{E'/E}(\alpha')}_{E/F} \\
    &= \set{\res_{E'/E}(x), \alpha'}_{E'/F} \\
    &= \set{lx', \alpha'}_{E'/F}
\end{align*}
for some $x'\in G(E')$. 
This implies the equality $(G/l \otimesM K_{s}^M/l)(F) = 0$.
\end{proof}

In the case $G = \Gm$ and $F$ is a (1-dimensional) local field, the lemma above reduces to a theorem of Sivitskii 
(\cite[Chapter~IX, Theorem~4.11]{FV02}).

\begin{lem}
\label{lem:normal}
     Let $C$ be a projective and geometrically irreducible curve over a field $F$ and 
    let $\cM$ be a Mackey functor over $F$. 
     Let $\phi\colon C'\to C$ be a finite surjective morphism which induces an isomorphism 
     \[
     C'\smallsetminus \phi^{-1}(\Sigma) \xrightarrow{\simeq} C\smallsetminus \Sigma 
     \]
     for a finite set $\Sigma \subset C$ of closed points in $C$. 
     Suppose that all closed points in $\phi^{-1}(\Sigma)$ are $F$-rational.
     Then, the map $\phi$ induces a surjective morphism 
     \[
     \Big(\underline{A}_0(C')\otimesM \cM\Big)(F) \twoheadrightarrow \Big(\underline{A}_0(C)\otimesM \cM \Big)(F).
     \]  
\end{lem}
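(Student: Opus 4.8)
The plan is to mirror the proof of \autoref{lem:red}. I would first show that the proper push-forward $\phi_\ast$ defines a surjection of Mackey functors $\underline{A}_0(C')\twoheadrightarrow\underline{A}_0(C)$, and then deduce the asserted statement by applying $-\otimesM\cM$, which is right exact. Since the Mackey product $(-\otimesM\cM)(F)$ only involves the values of its arguments on finite extensions of $F$, it is enough to prove that the base-changed push-forward
\[
(\phi_{F'})_\ast\colon \underline{A}_0(C')(F') = A_0(C'_{F'}) \longrightarrow A_0(C_{F'}) = \underline{A}_0(C)(F')
\]
is surjective for every finite extension $F'/F$.

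The first observation is that the hypotheses force every point of $\Sigma$ to be $F$-rational: since $\phi$ is surjective, each $x\in\Sigma$ admits a preimage $x'\in\phi^{-1}(\Sigma)$, and the residue-field inclusion $F(x)\hookrightarrow F(x')=F$ forces $F(x)=F$. If $\Sigma=\emptyset$ then $\phi$ is an isomorphism and there is nothing to prove, so I assume $\Sigma\neq\emptyset$; then $C$, and hence each $C_{F'}$, carries an $F$-rational (resp.\ $F'$-rational) point. Next I would check that all hypotheses are stable under base change $-\otimes_FF'$: the morphism $\phi_{F'}$ remains finite and surjective, it is an isomorphism over $C_{F'}\smallsetminus\Sigma_{F'}$ (base change of an isomorphism), and the fibre $(\phi_{F'})^{-1}(\Sigma_{F'})$, being the base change of a finite disjoint union of copies of $\Spec F$, consists of $F'$-rational points. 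Thus it suffices to prove the surjectivity of $(\phi_{F'})_\ast$ for $F'=F$; the general case is identical after replacing $F$ by $F'$.

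I then claim that $\phi_\ast$ is already surjective at the level of zero-cycles $Z_0(C')\to Z_0(C)$. For a closed point $x\notin\Sigma$, the isomorphism away from $\Sigma$ provides a unique preimage $x'$ with $F(x')=F(x)$, so $\phi_\ast[x']=[x]$; for a closed point $x\in\Sigma$, which is $F$-rational, any $F$-rational preimage $x'\in\phi^{-1}(x)$ likewise gives $\phi_\ast[x']=[x]$. Hence every generator of $Z_0(C)$ lies in the image of $\phi_\ast$, and passing to rational equivalence yields a surjection $\phi_\ast\colon CH_0(C')\twoheadrightarrow CH_0(C)$.

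Finally I would upgrade this to $\underline{A}_0$. The degree map $\deg = f_\ast^{CH}\colon CH_0(-)\to\Z$ is compatible with $\phi_\ast$ since $f_{C'}=f_C\circ\phi$, and it splits through any $F$-rational point; choosing $P\in\Sigma$ together with an $F$-rational preimage $P'$ with $\phi_\ast[P']=[P]$ makes the decompositions $CH_0(C')=A_0(C')\oplus\Z[P']$ and $CH_0(C)=A_0(C)\oplus\Z[P]$ compatible with $\phi_\ast$. The surjection on $CH_0$ is therefore block-diagonal for these splittings, so its restriction $\phi_\ast\colon A_0(C')\twoheadrightarrow A_0(C)$ is surjective, which is what was needed. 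The step demanding the most care is the verification that the geometric hypotheses, and especially the $F$-rationality of $\phi^{-1}(\Sigma)$, survive base change; once that is secured, the cycle-level computation and the splitting argument are routine.
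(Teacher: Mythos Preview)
Your proposal is correct and follows essentially the same approach as the paper: both show that $(\phi_{F'})_\ast\colon A_0(C'_{F'})\to A_0(C_{F'})$ is surjective for every finite $F'/F$ by lifting generators of $Z_0(C_{F'})$ through the isomorphism off $\Sigma_{F'}$ and through $F'$-rational preimages over $\Sigma_{F'}$, and then conclude via right exactness of $-\otimesM\cM$. Your treatment is in fact slightly more explicit than the paper's in that you separate out the trivial case $\Sigma=\emptyset$, record that the points of $\Sigma$ themselves are $F$-rational (which the paper uses implicitly when asserting the degree map splits), and reduce to $F'=F$ by first checking base-change stability rather than redoing the argument over each $F'$.
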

\begin{proof}
For any finite field extension $F'/F$, put $C_{F'}  = C\otimes_F F'$ and $(C')_{F'}  = C'\otimes_FF'$. 
We denote by $\phi_{F'}\colon (C')_{F'} \to C_{F'}$ the base change of the map $\phi$ to $F'$.  
We show that 
\[
(\phi_{F'})_{\ast}\colon \underline{A}_0(C')(F') = A_0((C')_{F'}) \to 
 A_0(C_{F'}) 
\]
is surjective.

Let $\pi_{F'}\colon C_{F'} \to C$ and $\pi'_{F'}\colon (C')_{F'} \to C'$ be the projection maps. 
Put $\Sigma_{F'} := (\pi_{F'})^{-1}(\Sigma)\subset C_{F'}$. There is an isomorphism 
\[
(C')_{F'}\smallsetminus (\phi_{F'})^{-1}(\Sigma_{F'}) \xrightarrow{\simeq} C_{F'} \smallsetminus \Sigma_{F'}.
\]
Consider the following commutative diagram: 
\begin{equation}\label{eq:CH0C'}
    \vcenter{\xymatrix{
    \displaystyle \bigoplus_{x' \in (C')_{F'}\smallsetminus (\phi_{F'})^{-1}(\Sigma_{F'})}\Z \oplus \bigoplus_{x' \in (\phi_{F'})^{-1}(\Sigma_{F'})} \Z\ar@{->>}[d]\ar@{->>}[r] & CH_0( (C')_{F'}) \ar[d]^{(\phi_{F'})_{\ast}} \\
    \displaystyle\bigoplus_{x \in C_{F'}\smallsetminus \Sigma_{F'}}\Z\oplus \bigoplus_{x \in \Sigma_{F'}}\Z \ar@{->>}[r] & CH_0( C_{F'}).
}}
\end{equation}
Recall that the push forward map is given by 
$(\phi_{F'})_{\ast}([x']) = [F'(x'):F'(x)][x]$ for  $x' \in C'_{F'}$ with $x = \phi_{F'}(x')$. 
By the assumption that the closed points in $\phi^{-1}(\Sigma)$ are $F$-rational, 
the closed points in $(\phi_{F'})^{-1}(\Sigma_{F'})$ are all $F'$-rational. 
For each $x \in \Sigma_{F'}$, the fiber $(\phi_{F'})^{-1}(x)$ is finite and $F'$-rational, 
the $x$-component of the left vertical map of \eqref{eq:CH0C'} 
\[
\bigoplus_{x'\in (\phi_{F'})^{-1}(x)} \Z \to \Z 
\]
is just the summation. 
Therefore, the left vertical map is surjective in the diagram \eqref{eq:CH0C'}, and so is the right.  
%Since the degree map $CH_0(C_{F'}) \to \Z$ splits, 
Since the push-forward map 
$(\phi_{F'})_{\ast}\colon CH_0((C')_{F'})\to CH_0(C_{F'})$ 
is surjective and is compatible with the degree maps, its restriction to the degree-zero parts is also surjective. Hence
$(\phi_{F'})_{\ast}\colon A_0((C')_{F'})\to A_0(C_{F'})$ is surjective. 
This gives rise to a surjection 
\begin{equation}\label{eq:C'}
  (\underline{A}_0(C') \otimesM \cM)(F)\twoheadrightarrow
  (\underline{A}_0(C) \otimesM \cM)(F).
\end{equation}
\end{proof}

\begin{prop}
\label{prop:Van}
    Let $F$ be an infinite $\fB_s(l)$-field for some $s\ge 0$ and a prime $l$, and  
    let $X$ be an integral projective and geometrically irreducible scheme over $F$. 
    Then, 
    \[
    \Big(\underline{A}_0(X)\otimesM \Gm^{\otimesM n}\Big)(F)
    \] 
    is $l$-divisible for any $n \ge s$.
\end{prop}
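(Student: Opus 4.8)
The plan is to prove the equivalent statement that $\big(\underline{A}_0(X)\otimesM \Gm^{\otimesM n}\big)(F)/l=0$, that is, that every generating symbol $\set{z,a_1,\ldots,a_n}_{E/F}$—with $E/F$ finite, $z\in \underline{A}_0(X)(E)=A_0(X_E)$, and $a_1,\ldots,a_n\in E^{\times}$—is divisible by $l$. The strategy follows the classical pattern for zero-cycles: reduce the class $z$ to one carried by a curve, pass to a smooth model whose $\underline{A}_0$ is a Jacobian, and invoke the division-point argument of \autoref{cor:div}.

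First I would realize $z$ geometrically. Fixing a projective embedding of $X_E$ and writing $z=\sum_i m_i[P_i]$ with $\sum_i m_i[E(P_i):E]=0$, the hypothesis that $F$—hence every finite extension $E$—is infinite lets me run a Bertini-type argument producing a geometrically integral curve $C\subseteq X_E$ through the support $\{P_i\}$ and smooth along it. Replacing $C$ by its reduction and then by a suitable (partial) normalization, the surjectivity statements of \autoref{lem:red} and \autoref{lem:normal} express $\set{z,a_1,\ldots,a_n}_{E/F}$ as a transfer of push-forwards, along morphisms $\psi\colon \widetilde{C}\to X_E$ of smooth projective curves over $E$, of symbols in $\big(\underline{A}_0(\widetilde{C})\otimesM \Gm^{\otimesM n}\big)(E)$. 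Because each $\psi_{\ast}$ is a morphism of Mackey functors and the transfer preserves $l$-divisibility, it then suffices to prove that $\big(\underline{A}_0(\widetilde{C})\otimesM \Gm^{\otimesM n}\big)(E)$ is $l$-divisible for $n\ge s$.

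For a smooth projective geometrically connected curve $\widetilde{C}$ over $E$, the Mackey functor $\underline{A}_0(\widetilde{C})$ is, after adjoining a rational point over a finite extension, its Jacobian $\Jac(\widetilde{C})$, an abelian—hence semi-abelian—variety, so \autoref{cor:div} applies directly. Alternatively one may dispense with rational points and repeat the argument of \autoref{cor:div} verbatim: for a symbol $\set{w,\alpha}_{E'/E}$ in $\big(\underline{A}_0(\widetilde{C})/l\otimesM K_s^M/l\big)(E)$ the class $w\in \operatorname{Pic}^0(\widetilde{C}_{E'})$ satisfies $\res_{E''/E'}(w)=lw'$ over some finite $E''/E'$, since $\operatorname{Pic}^0(\widetilde{C}_{\overline{E'}})=\Jac(\widetilde{C})(\overline{E'})$ is $l$-divisible for $l\neq \ch(F)$; the projection formula and the surjectivity of $N_{E''/E'}$ on $K_s^M/l$ (\autoref{lem:Bsl}) then force $\set{w,\alpha}_{E'/E}=0$. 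In either case the curve is settled.

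The main obstacle is precisely this geometric reduction to curves: producing, over the infinite field $E$, a geometrically integral curve through a prescribed finite set of closed points that is smooth there, and organizing its reduction and normalization—together with the needed rationality of the exceptional fibres and the bookkeeping of fields of definition—so that \autoref{lem:red} and \autoref{lem:normal} genuinely apply and the cycle descends to a smooth model. Once this is available the rest is formal, as $\Jac(\widetilde{C})$ is an abelian variety and the $\fB_s(l)$-hypothesis enters only through the norm surjectivity used in \autoref{cor:div}.
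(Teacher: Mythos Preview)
Your outline matches the paper's proof closely: reduce to symbols over $F$ (the paper does this first via \autoref{lem:norm}, whereas you carry the extension $E$ along, which is fine since $E$ is again $\fB_s(l)$), pass to a curve through the support of the cycle via Altman--Kleiman, then normalize and invoke the Jacobian through \autoref{cor:div}. You also correctly identify that the delicate part is making \autoref{lem:red} and \autoref{lem:normal} apply, which requires a base change so that the fibres over the singular locus become rational.

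One point you gloss over and should make explicit: after normalization and reduction you obtain a \emph{regular} projective curve, but over an imperfect field this need not be \emph{smooth}, so $\underline{A}_0(\widetilde{C})$ is not yet a Jacobian in the usual sense. The paper inserts an extra step here: by \cite[Tag~0BY4]{stacks-project} there is a finite purely inseparable extension $F'/F$ over which the curve becomes smooth; one then uses \autoref{lem:norm} once more (transfer surjectivity mod $l$) together with \autoref{lem:red} to descend. Without this step your reduction to ``smooth projective curves over $E$'' is incomplete in characteristic $p>0$. Also, your parenthetical restriction ``for $l\neq \ch(F)$'' in the alternative Jacobian argument is unnecessary: multiplication by $l$ on an abelian variety is a surjective isogeny for every prime $l$, so $\Jac(\widetilde{C})(\overline{E'})$ is $l$-divisible regardless.
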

\begin{proof}
For the prime $l$, 
we show $\Big(\underline{A}_0(X)\otimesM \Gm^{\otimesM n}\Big)(F)/l = 0$. 
% \smallskip
% \noindent
% (\textbf{Reduction to $l\neq p$}) 
% For the prime $l=p$ when $p>0$, 
% $\Big(\underline{A}_0(X)\otimesM \Gm^{\otimesM 2}\Big)(F)$ is $p$-divisible (\autoref{lem:p-div}). 
% Hence, in the following we consider a prime $l\neq p$. 

\smallskip
\noindent
(\textbf{Reduction to a curve}) 
Assume $\dim(X) \ge 2$.
Take an element 
\[
\{\gamma,\alpha\}_{F'/F}\in \Big(\underline{A}_0(X)\otimesM \Gm^{\otimesM n}\Big)(F)
\]
with $\gamma\in A_0(X_{F'})$ and $\alpha\in \Gm^{\otimesM n}(F')$.
Since $\tr_{F'/F}(\{\gamma,\alpha\}_{F'/F'})=\{\gamma,\alpha\}_{F'/F}$, in order to show
%From the norm argument, to show 
$\Big(\underline{A}_0(X)\otimesM \Gm^{\otimesM n}\Big)(F)/l = 0$,
it is enough to consider the case $F'=F$ and to show $\set{\gamma, \alpha}_{F/F} = 0 \mod l$ 
for $\gamma \in A_0(X)$ and $\alpha \in \Gm^{\otimesM n} (F)$. 

Write $\gamma = \sum_{i=1}^ra_i [x_i] $ in $A_0(X)$ 
for $x_1,\ldots, x_r \in X_0$. 
By \cite[Theorem~1]{AK79}, 
there exists an integral geometrically irreducible curve $C \subset X$ 
such that  $\set{x_1,\ldots, x_r} \subset C$. 
The $0$-cycle $\gamma_C = \sum_{i=1}^r a_i [x_i]$ in $A_0(C)$ maps 
to $\gamma$ by the natural homomorphism $i_{\ast}\colon A_0(C) \to A_0(X)$ 
induced from the inclusion $i\colon C\hookrightarrow X$. 
In particular, $i_{\ast}$ induces a map 
\[
 \Big(\underline{A}_0(C)\otimesM \Gm^{\otimesM n}\Big)(F)/l  \to \Big(\underline{A}_0(X)\otimesM \Gm^{\otimesM n}\Big)(F)/l
\]
and this maps 
the symbol $\set{\gamma_C,\alpha}_{F/F}$ to the symbol $\set{\gamma, \alpha}_{F/F}$. 
Therefore, it remains to show $\Big(\underline{A}_0(C)\otimesM \Gm^{\otimesM n}\Big)(F)/l  = 0$.

\smallskip
\noindent
(\textbf{Reduction to a smooth curve})
Let $X=C$ be an integral projective and geometrically irreducible curve
over $F$. By
\cite[\href{https://stacks.math.columbia.edu/tag/0BY4}{Lemma 0BY4}]
{stacks-project}, there exists a finite purely inseparable extension
$F'/F$ such that, if
$D := (C_{F'})_{\red}$ and $\phi\colon C' := D^N\to D$ 
is the normalization of $D$,
then $C'$ is smooth and projective over $F'$.

Since nilpotent thickenings do not affect zero-cycles, the closed
immersion $D\hookrightarrow C_{F'}$ induces an isomorphism of Mackey functors
$\underline{A}_0(D) \xrightarrow{\simeq} \underline{A}_0(C_{F'})$.
$\phi$ is finite and is an
isomorphism outside a finite set $\Sigma\subset D$.

Choose a finite extension $F''/F'$ such that, for the base change
$\phi_{F''}\colon C'_{F''}\to D_{F''}$,
all closed points in $\phi_{F''}^{-1}(\Sigma_{F''})$ 
are
$F''$-rational, where $\Sigma_{F''}:=\Sigma\otimes_{F'}F''$.
Such an extension exists because $\phi^{-1}(\Sigma)$ is finite over $F'$. 
%Indeed, after replacing $F''$ by a finite extension splitting
%the residue fields of the finitely many closed points of
%$\phi^{-1}(\Sigma)$, the finite scheme
%$\phi^{-1}(\Sigma)\otimes_{F'}F''$ has only $F''$-rational closed
%points.
Therefore \autoref{lem:normal} applies to
$\phi_{F''}\colon C'_{F''}\to D_{F''}$, and gives a surjection
\[
  \bigl(\underline{A}_0(C'_{F''})\otimesM
  \Gm^{\otimesM n}\bigr)(F'')/l
  \twoheadrightarrow
  \bigl(\underline{A}_0(D_{F''})\otimesM
  \Gm^{\otimesM n}\bigr)(F'')/l .
\]
Using the nilpotent invariance above, we identify
$\underline{A}_0(D_{F''})
  \simeq
  \underline{A}_0(C_{F''})$.
Hence we get a surjection
\[
  \bigl(\underline{A}_0(C'_{F''})\otimesM
  \Gm^{\otimesM n}\bigr)(F'')/l
  \twoheadrightarrow
  \bigl(\underline{A}_0(C_{F''})\otimesM
  \Gm^{\otimesM n}\bigr)(F'')/l .
\]

On the other hand, since $F$ is a $\fB_s(l)$-field, \autoref{lem:norm} gives a surjection
\[
  \tr_{F''/F}\colon
  \bigl(\underline{A}_0(C_{F''})\otimesM
  \Gm^{\otimesM n}\bigr)(F'')/l
  \twoheadrightarrow
  \bigl(\underline{A}_0(C)\otimesM
  \Gm^{\otimesM n}\bigr)(F)/l .
\]
Therefore the composite
\[
  \bigl(\underline{A}_0(C'_{F''})\otimesM
  \Gm^{\otimesM n}\bigr)(F'')/l
  \twoheadrightarrow
  \bigl(\underline{A}_0(C)\otimesM
  \Gm^{\otimesM n}\bigr)(F)/l
\]
is surjective.

Thus, in order to prove $\bigl(\underline{A}_0(C)\otimesM
  \Gm^{\otimesM n}\bigr)(F)/l=0$,
it is enough to prove the same assertion for the smooth projective curve
$C'_{F''}$ over $F''$. 
Replacing $F$ by $F''$ and $C$ by
$C'_{F''}$, we may assume that $C$ is smooth, projective and
geometrically irreducible over $F$. 

Finally, by \autoref{lem:norm}, after replacing $F$ by a further finite extension if necessary,
we may also assume that $C(F)\neq\emptyset$.

\smallskip
\noindent
(\textbf{Proof of the $l$-divisibility})
Let $J = \Jac_{C}$ be the Jacobian variety of $C$. 
For any finite extension $F'/F$, the Abel--Jacobi map 
\[
A_0(C_{F'})\xrightarrow{\simeq} J(F') \simeq \Jac_{C_{F'}}(F') 
\]
is an isomorphism (cf.~\cite[Chapter 7, Theorem 4.39]{Liu02}) %\cite[Section 7]{MilneJac}
and this gives rise to an isomorphism 
$\underline{A}_0(C) \xrightarrow{\simeq} J$ 
of Mackey functors. 
By \autoref{cor:div}, 
we obtain 
$(J\otimesM \Gm^{\otimesM n})(F)/l = 0$.  
This proves the proposition. 
\end{proof}

Recall that the index $\delta(X/F)$ of $X$ over $F$ is the greatest common divisor of the degrees of all closed points of $X$.
In particular, if $X$ has a $F$-rational point, then $\delta(X/F)=1$.
If $F$ is a finite field, then $X$ has a zero-cycle of degree one, i.e.~$\delta(X/F)=1$ (\cite[1.5.3, Lemma~1]{Sou}).

\begin{thm}\label{thm:main}
    Let $F$ be a $\fB_s(l)$-field for some $s\ge 0$ and a prime $l$ and 
    let $X$ be a smooth projective and geometrically irreducible scheme over $F$ of $d = \dim(X)$.
    Then, $CH^{d+n}(X,n)$ is $l$-divisible for any $n\ge s+1$.
	If $l\nmid \delta(X/F)$, then 
    the map $f_\ast^{CH}$ induces an isomorphism 
    \[
        CH^{d+s}(X,s)/l^r \xrightarrow{\simeq} K_{s}^M(F)/l^r
    \]
    for any $r \ge 1$. 
\end{thm}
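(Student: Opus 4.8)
The plan is to analyze the structure sequence
\[
0 \to A^{d+n}(X,n) \to CH^{d+n}(X,n) \xrightarrow{f_*^{CH}} K_n^M(F)
\]
coming from \eqref{eq:fCH}, and to set $J_n := \mathrm{im}(f_*^{CH})$. By the presentation \eqref{eq:MK-Chow} the map $f_*^{CH}$ is induced by the sum of norms, so $J_n = \sum_{x\in X_0} N_{F(x)/F}\big(K_n^M(F(x))\big)$. I would first dispose of the case where $F$ is finite: then $F$ is $\fB_1$, so $s\ge 1$, and the statement follows from \autoref{thm:Akh} (note $K_s^M(F)=0$ for a finite field when $s\ge 2$, and $K_1^M(F)=F^\times$). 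Hence I may assume $F$ is infinite, so that \autoref{prop:Van} applies.

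The heart of the matter, and the step I expect to be the main obstacle, is the claim that $A^{d+n}(X,n)$ is $l$-divisible for every $n\ge s$. Here I would use \autoref{thm:Akh2} to identify $CH^{d+n}(X,n)$ with the Somekawa group $K(F;\underline{CH}_0(X),\Gm^{\times n})$, a quotient of $(\underline{CH}_0(X)\otimesM \Gm^{\otimesM n})(F)$, and recall $K_n^M(F)=K(F;\Z,\Gm^{\times n})$. Applying the right-exact Somekawa construction to the short exact sequence of Mackey functors $0 \to \underline A_0(X) \to \underline{CH}_0(X) \xrightarrow{\deg} \underline B \to 0$, with $\underline B\subseteq \Z$ the image of the degree map, produces a surjection of $K(F;\underline A_0(X),\Gm^{\times n})$ — itself a quotient of $(\underline A_0(X)\otimesM \Gm^{\otimesM n})(F)$, which is $l$-divisible by \autoref{prop:Van} — onto $\ker\big(CH^{d+n}(X,n)\to K(F;\underline B,\Gm^{\times n})\big)$. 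The subtlety is that this kernel can be strictly smaller than $A^{d+n}(X,n)$, the discrepancy being $\ker\big(K(F;\underline B,\Gm^{\times n})\to K_n^M(F)\big)$, which measures the failure of $\underline B\hookrightarrow \Z$ to remain injective after the Somekawa construction — an \emph{index} phenomenon. I would control it modulo $l$: since $F$ is $\fB_s(l)$, every norm $N_{F(x)/F}$ is surjective modulo $l^r$ (\autoref{lem:Bsl}), which forces this index contribution to vanish mod $l$, so that $A^{d+n}(X,n)/l$ becomes a quotient of $(\underline A_0(X)\otimesM \Gm^{\otimesM n})(F)/l=0$.

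Granting this $l$-divisibility, the case $n\ge s+1$ is short. Each residue field $F(x)$ is again a $\fB_s(l)$-field, so $K_n^M(F(x))$ is $l$-divisible for $n>s$ by \autoref{lem:KatLem10}; since norms are homomorphisms and a sum of $l$-divisible subgroups is $l$-divisible, $J_n$ is $l$-divisible. Thus $CH^{d+n}(X,n)$ is an extension of the $l$-divisible group $J_n$ by the $l$-divisible group $A^{d+n}(X,n)$, hence $l$-divisible.

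For the threshold case $n=s$ I would argue modulo $l^r$. Tensoring $0\to A^{d+s}(X,s)\to CH^{d+s}(X,s)\to J_s\to 0$ with $\Z/l^r$ and using $A^{d+s}(X,s)/l^r=0$ yields $CH^{d+s}(X,s)/l^r \xrightarrow{\simeq} J_s/l^r$. It then remains to compare $J_s$ with $K:=K_s^M(F)$. Fixing a closed point $x_0\in X$ and writing $B_0:=N_{F(x_0)/F}K_s^M(F(x_0))\subseteq J_s$, the quotient $K/B_0$ is $l$-divisible (\autoref{lem:Bsl}) and has trivial $l$-primary torsion (\autoref{def:Bs}); hence its localization at $l$ is a $\Q$-vector space, and so is that of its further quotient $K/J_s$. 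In particular $(K/J_s)[l^r]=0$ and $(K/J_s)/l^r=0$, so the inclusion $J_s\hookrightarrow K$ induces $J_s/l^r\xrightarrow{\simeq}K/l^r$. Composing the two isomorphisms shows that $f_*^{CH}$ induces the desired isomorphism $CH^{d+s}(X,s)/l^r \xrightarrow{\simeq} K_s^M(F)/l^r$.
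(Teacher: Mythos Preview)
Your overall strategy is sound and you have correctly located the main difficulty, but the step you flag as ``the main obstacle'' is precisely where the argument is left incomplete. You assert that the $\fB_s(l)$-hypothesis ``forces this index contribution to vanish mod $l$'', i.e.\ that $\ker\big(K(F;\underline B,\Gm^{\times n})\to K_n^M(F)\big)/l=0$, but you give no mechanism for this. Surjectivity of the individual norm maps $N_{F(x)/F}$ on $K_s^M$ modulo $l$ does not by itself say anything about that kernel: it lives on the \emph{source} side $K(F;\underline B,\Gm^{\times n})$, not in $K_s^M(F)$. You also invoke right-exactness of the Somekawa construction in the first variable, which is not established in the paper (only right-exactness of the Mackey product $\otimesM$ is used) and is not obvious, since the Weil-reciprocity relations need not be preserved by an arbitrary morphism of Mackey functors.

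The paper sidesteps both issues. For $n\ge s+1$ it does not touch $A^{d+n}(X,n)$ at all: since each $F(x)$ is again $\fB_s(l)$, \autoref{lem:KatLem10} gives $K_n^M(F(x))/l=0$, and the presentation \eqref{eq:MK-Chow} immediately yields $CH^{d+n}(X,n)/l=0$. For $n=s$ the paper works entirely at the Mackey-product level modulo $l^r$: writing $\cM=\underline B$ for the image of the degree map, it analyzes the sequence $0\to\cM\to\Z\to\Z/\cM\to 0$ and shows, via \autoref{lem:norm} applied over an extension $F'/F$ with $X(F')\neq\emptyset$ (where $\Z/\cM$ vanishes), that both $(\Z/\cM)[l^r]\otimesM\Gm^{\otimesM s}(F)$ and $(\Z/\cM)/l^r\otimesM\Gm^{\otimesM s}(F)$ are zero. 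This forces $(\cM\otimesM\Gm^{\otimesM s})(F)/l^r\simeq K_s^M(F)/l^r$. A diagram chase using the surjection of \autoref{thm:Akh2} and the vanishing $(\underline A_0(X)\otimesM\Gm^{\otimesM s})(F)/l^r=0$ from \autoref{prop:Van} then gives the isomorphism $CH^{d+s}(X,s)/l^r\simeq K_s^M(F)/l^r$ directly, without ever establishing $l$-divisibility of $A^{d+s}(X,s)$ as an intermediate step (that is derived afterwards, in \autoref{prop:surj}). In other words, the ``index phenomenon'' is dispatched by \autoref{lem:norm} applied to the cokernel functor $\Z/\cM$, not by an unproved claim about kernels of Somekawa maps. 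Your argument that $J_s/l^r\simeq K_s^M(F)/l^r$ via the prime-to-$l$ torsion of $K_s^M(F)/B_0$ is correct and pleasant, but it becomes unnecessary once the paper's route is taken.
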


\begin{proof}%[{Proof of \autoref{thm:main}}]
When the field $F$ is finite, $F$ is a $\fB_1$-field. 
The theorem in this setting follows from \autoref{thm:Akh}.
Hence, we may assume that $F$ is an infinite $\fB_s(l)$-field for a prime $l$. 
For $n\ge s+1$, 
by \eqref{eq:MK-Chow}, there is a surjective homomorphism
\[
    \bigoplus_{x\in X_0}K_{n}^M(F(x))/l \twoheadrightarrow CH^{d+n}(X,n)/l. 
\]
By \autoref{lem:KatLem10}, 
we have $K_{n}^M(F(x))/l = 0$. The assertion $CH^{d+n}(X,n)/l = 0$ follows from this.

Let $\cM$ be the quotient $\underline{CH}_0(X)/\underline{A}_0(X)$ as a Mackey functor.
From the following commutative diagram of Mackey functors with exact rows
\begin{equation}
\xymatrix{
 0\ar[r]&\cM\ar[d]^{l^r} \ar[r] &\Z \ar[d]^{l^r}\ar[r] & \Z/\cM \ar[d]^{l^r} \ar[r] & 0\\
 0\ar[r]&\cM \ar[r] &\Z \ar[r] & \Z/\cM  \ar[r] & 0,
 }
\end{equation}
we obtain an exact sequence of Mackey functors
\[
0\to (\Z/\cM)[l^r]\to \cM/l^r\to \Z/l^r\to (\Z/\cM)/l^r\to 0.
\]
A zero-cycle on $X$ of degree $\delta(X/F)$ remains a
zero-cycle of the same degree after every finite extension of $F$.
Hence $\delta(X/F)\Z\subset\cM$ as Mackey functors 
and $\Z/\cM$ is $\delta(X/F)$-torsion.
By the assumption $l\nmid \delta(X/F)$, we have $(\Z/\cM)[l^r]=(\Z/\cM)/l^r=0$.
Hence we have
$$
\cM/l^r\simeq  \Z/l^r
$$
% Thus we have an exact sequence
% \[
% \Z/\cM[l^r]\otimesM \Gm^{\otimesM s}(F)\to \cM/l^r\otimesM \Gm^{\otimesM s}(F)\to \Z/l^r\otimesM \Gm^{\otimesM s}(F)\to (\Z/\cM)/l^r\otimesM \Gm^{\otimesM s}(F)\to 0.
% \]
% Note that $\cM$ is isomorphic to the image of the map $\deg \colon \underline{CH}_0(X) \to \Z$ given by the degree map. 
% For a finite field extension $F'/F$ such that $X(F')\neq \emptyset$, we have $\Z/\cM(F')=0$ and hence 
% \[
% (\Z/\cM)/l^r\otimesM \Gm^{\otimesM s}(F')=\Z/\cM[l^r]\otimesM \Gm^{\otimesM s}(F')=0.
% \]
% By \autoref{lem:norm}, we also have
% \[
% (\Z/\cM)/l^r\otimesM \Gm^{\otimesM s}(F)=\Z/\cM[l^r]\otimesM \Gm^{\otimesM s}(F)=0 
% \]
which shows 
\[
(\cM\otimesM \Gm^{\otimesM s})(F)/l^r \simeq (\cM/l^r\otimesM \Gm^{\otimesM s})(F)\simeq (\Z/l^r\otimesM \Gm^{\otimesM s})(F)\simeq K^M_s(F)/l^r.
\]
Here, the last isomorphism is given by \cite[Theorem 4.5]{Hir24}.
This induces a commutative diagram with exact rows: 
\begin{equation}\label{diag:main}
\vcenter{
\xymatrix@C=4mm{
 (\underline{A}_0(X)\otimesM \Gm^{\otimesM s})(F)/l^r \ar[r] &(\underline{CH}_0(X)\otimesM \Gm^{\otimesM s})(F)/l^r \ar@{->>}[d]\ar[r] & (\cM \otimesM \Gm^{\otimesM s})(F)/l^r \ar[d]^{\simeq} \ar[r] & 0\\
 & CH^{d+s}(X,s)/l^r \ar[r]^-{f_{\ast}^{CH}} & K_s^M(F)/l^r.
 }}
 \end{equation}
From \autoref{thm:Akh2} the middle vertical map in the above diagram is surjective.
%and thus the left vertical map is also surjective.
For the prime $l$, we have $(\underline{A}_0(X)\otimesM \Gm^{\otimesM s})(F)/l^r = 0$ by \autoref{prop:Van}. 
From the above diagram, 
%we obtain that $A^{d+s}(X,s)/l^s=0$ and 
$f_{\ast}^{CH}\colon CH^{d+s}(X,s)/l^r \to K_s^M(F)/l^r$ is an isomorphism for any $r\geq 1$.
\end{proof}

\begin{cor}\label{cor:main}
    Let $F$ be a $\fB_s$-field for some $s\ge 0$ 
    and let $X$ be an integral projective and geometrically irreducible scheme over 
    $F$ of $d = \dim(X)$. 
    Then, 
    \[
    f_{\ast}^{CH}\colon CH^{d+s}(X,s) \twoheadrightarrow K_s^M(F)
    \]
    is surjective. 
    If we further assume that $X$ is smooth over $F$ 
    and $\delta(X/F)=1$
    then 
    \[
    CH^{d+s}(X,s) \simeq K_s^M(F) \oplus A^{d+s}(X,s)
    \]
    with divisible group $A^{d+s}(X,s)$.
\end{cor}
\begin{proof}
    There exists a finite field extension $F'/F$ such that $X(F')\neq \emptyset$. 
    The map $f_{\ast}^{CH}:CH^{d+s}(X_{F'},s) \to K_s^M(F')$ for $X_{F'} = X\otimes_F F'$ 
    is surjective. Consider the following commutative diagram: 
    \begin{equation}
    \xymatrix{
        CH^{d+s}(X_{F'}, s) \ar@{->>}[r]^-{f_{\ast}^{CH}}\ar[d]  & K_s^M(F') \ar@{->>}[d]^{N_{F'/F}}   \\
        CH^{d+s}(X,s)\ar[r]^-{f_{\ast}^{CH}} & K_s^M(F).
        }
    \end{equation}
    Since $F$ is a $\fB_s$-field, the right vertical map in the above diagram is surjective. 
    From the above diagram, $f_{\ast}^{CH}\colon  CH^{d+s}(X,s) \to K_s^M(F)$ is surjective. 
    % Take a rational point $x\in X$. Then the composite of the following maps
    % \[
    % K_{s}^M(F(x))\simeq CH^{s}(x,s)\xrightarrow{i_{x*}} CH^{d+s}(X,s)\xrightarrow{f_*^{CH}} CH^{s}(\Spec(F),s)\simeq K_{s}^M(F)
    % \]
    % is the norm map $N_{F(x)/F}:K_{s}^M(F(x))\to K_{s}^M(F)$ which is surjective.
    %(\cite[Chapter~IX, Section~4, Exercise 2]{FV02})
    %Thus $f_*^{CH}\colon CH^{d+s}(X,s) \to K_s^M(F)$ is surjective.
    
    Now, we further assume that $X$ is smooth over $F$. 
    Suppose that $F$ is finite. 
    If $s=1$, the assertion
    follows from \autoref{thm:Akh}. If \(s\ge2\), then
    $CH^{d+s}(X,s)=0$ by \autoref{thm:Akh}, while
    $K_s^M(F)=0$. Thus the assertion also holds in this case.
    Suppose $F$ is an infinite $\fB_s$-field. 
    Using Akhtar's $K$-group (cf.~\autoref{thm:Akh2}), we have 
    \begin{equation}
    	K(F;\underline{CH}_0(X),\overbrace{\Gm,\ldots,\Gm}^s) \simeq CH^{d+s}(X,s).
    \end{equation}
    There is a surjective homomorphism 
    \[
    	\psi\colon (\underline{CH}_0(X)\otimesM K_{s}^M)(F)\to K(F;\underline{CH}_0(X),\overbrace{\Gm,\ldots,\Gm}^s)
    \]
    defined by 
    \[
    	\psi(\set{\alpha, \set{x_1,\ldots, x_s}_{F'}}_{F'/F}) = \set{\alpha, x_1,\ldots, x_s}_{F'/F} 
    \]
    for a finite extension $F'/F$ and a symbol $\set{x_1,\ldots,x_s}_{F'}$ in $K_s^M(F')$. 
	It is well-defined and surjective by the arguments in \cite[Theorem 1.4]{Som90}. 
    By the assumption $\delta(X/F)=1$,
    we have the short exact sequence of Mackey functors 
    $$0\to \underline{A}_0(X)\to \underline{CH}_0(X)\to \Z \to 0$$
    which induces 
    the following commutative diagram with exact rows: 
	\[
	\xymatrix{
	& (\underline{A}_0(X)\otimesM K_s^M)(F)\ar[d]\ar[r] & 	(\underline{CH}_0(X)\otimesM K_s^M)(F) \ar[r] \ar@{->>}[d]^{\psi}& (\Z\otimesM K_s^M)(F) \ar[d]^{\simeq} \ar[r] & 0  \\
	0 \ar[r] & A^{d+s}(X,s)\ar[r] & CH^{d+s}(X,s)\ar[r]^-{f_\ast^{CH}} & K_s^M(F) \ar[r] & 0.
	}
	\]
    %The right vertical map is an isomorphism by \eqref{eq:cM}.
	From this diagram we obtain that the left vertical map is surjective. 
    Since there is a surjective map 
    \[
    (\underline{A}_0(X)\otimesM \Gm^{\otimesM s})(F)\twoheadrightarrow  (\underline{A}_0(X)\otimesM K_s^M)(F), 
    \]
    the kernel $A^{d+s}(X,s)$ of the map $f_{\ast}^{CH}$ is divisible by \autoref{prop:Van}.
    Note that a divisible group is an injective object in the category of abelian groups 
     (\cite[\href{https://stacks.math.columbia.edu/tag/01D6}{Section 01D6}]{stacks-project}), 
    the short exact sequence 
    \[
    0 \to A^{d+s}(X,s) \to CH^{d+s}(X,s) \to K_s^M(F) \to 0
    \]
    splits (\cite[\href{https://stacks.math.columbia.edu/tag/0136}{Lemma 0136}]{stacks-project}). 
\end{proof}

\begin{rem}
\begin{enumerate}
\item 
When $F$ is algebraically closed, it is a $\fB_0$-field. 
As we have $K_0^M(F)=\Z$, 
we have $CH_0(X) \simeq \Z \oplus A_0(X)$. 
The $l$-primary torsion part $A_0(X)\{l\}$ is isomorphic to the Albanese variety $\mathrm{Alb}_X(F)\{l\}$ which is $l$-divisible for any prime $l$ (\cite{Roj80}, \cite{Mil82}). 
Our result implies $A_0(X)$ is divisible although it may not be isomorphic to an abelian variety. 

\item 
Let $p$ be an odd prime and, 
consider $X = \Spec(\Qp(\zeta_p))$ over the $p$-adic field $\Qp$  with a primitive $p$-th root of unity $\zeta_p$. 
The scheme $X$ is not geometrically irreducible and $\Qp$ is a $\fB_2$-field. 
In this case, the kernel $A^2(X,2)$ is not divisible. 
In fact, 
$f_\ast^{CH} \colon CH^{0+2}(X,2)\simeq K_2^M(\Qp(\zeta_p)) \to K_2^M(\Qp)$ is just the norm map. 
From the structure theorem (\cite[Chapter IX, Theorems 4.3 and 4.7]{FV02}), 
we have 
\[
K_2^M(\Qp(\zeta_p)) \simeq \Z/p(p-1) \oplus D',\ \mbox{and}\ K_2^M(\Qp) \simeq \Z/(p-1) \oplus D,
\]
where $D$ and $D'$ are uniquely divisible groups. 
Since $K_2^M(\Qp)$ is $p$-torsion free, 
there is an exact sequence 
\[
0 \to A^{2}(X,2)/p \to K_2^M(\Qp(\zeta_p))/p \to K_2^M(\Qp)/p \to 0.
\]
This implies that $A^{2}(X,2)/p \simeq K_2^M(\Qp(\zeta_p))/p \simeq \Z/p$. 
\end{enumerate}
\end{rem}

\subsection*{Applications}
In what follows, we describe the structure of $CH^{d+s}(X,s)$ using our main result (\autoref{thm:main}) for arithmetic fields whose Milnor $K$-groups are explicitly known. 
First, we consider a (1-dimensional) local field.
As noted in \autoref{sec:intro}, the corollary below refines the results in \cite[Theorem 1.7 (3), Corollary 1.8]{GKR22}.

\begin{cor}\label{cor:CH}
    Let $X$ be a smooth projective and geometrically irreducible scheme over a local field $F$ of $d = \dim(X)$. 
    Then $CH^{d+n}(X,n)$ is divisible for $n>2$.  
    If we further assume $\delta(X/F)=1$, then there is an isomorphism
    \[
    CH^{d+2}(X,2)\simeq \mu(F)\oplus mK_2^M(F)\oplus A^{d+2}(X, 2).
    \]
    Here, $\mu(F)$ denotes the group of roots of unity in $F$,  $m:=\#\mu(F)$, $mK_2^M(F)$ is an uncountable, uniquely divisible group and $A^{d+2}(X,2)$ is divisible.
\end{cor}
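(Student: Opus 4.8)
The plan is to read off the corollary from the main theorem together with the classical structure of $K_2^M$ of a local field. By the conventions of the paper a local field is a $1$-dimensional local field, so \autoref{prop:Kato} shows that $F$ is a $\fB_2$-field, i.e. $\fB_2(l)$ for every prime $l$. Applying \autoref{thm:main} with $s=2$, for each $n\ge 3$ and each $l$ the group $CH^{d+n}(X,n)$ is $l$-divisible, hence divisible; this is the first assertion. For $n=2$, \autoref{prop:surj} with $s=2$ gives a splitting
\[
CH^{d+2}(X,2)\simeq K_2^M(F)\oplus A^{d+2}(X,2)
\]
with $A^{d+2}(X,2)$ divisible, so it only remains to decompose the summand $K_2^M(F)$.

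Next I would invoke Moore's theorem \cite[Chapter IX, Theorem 4.3]{FV02}, which yields a short exact sequence
\[
0\to V\to K_2^M(F)\to \mu(F)\to 0
\]
in which $V$ is uniquely divisible; here $V$ is the maximal divisible subgroup and, $\mu(F)$ being the full (finite cyclic) torsion subgroup of $K_2^M(F)$, the sequence splits, so $K_2^M(F)\simeq \mu(F)\oplus V$. To match the statement I would then identify $V$ with $mK_2^M(F)$, where $m=\#\mu(F)$: multiplication by $m$ kills the finite summand $\mu(F)$ and restricts to an automorphism of the uniquely divisible summand $V$, whence $mK_2^M(F)=mV=V$. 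The uncountability of $V=mK_2^M(F)$ is part of the same structure theory (the uniquely divisible part is a $\Q$-vector space of uncountable dimension, exactly as in Sivitskii's theorem \cite[Chapter IX, Theorem 4.11]{FV02} for $K_n^M$ with $n\ge 3$).

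Combining the two splittings gives
\[
CH^{d+2}(X,2)\simeq \mu(F)\oplus mK_2^M(F)\oplus A^{d+2}(X,2),
\]
as desired. I do not expect a serious obstacle here, since the corollary is essentially a specialization of \autoref{thm:main} followed by Moore's theorem. The only point demanding care is the identification of the abstract uniquely divisible summand $V$ with the explicit subgroup $mK_2^M(F)$, which rests on $\mu(F)$ being precisely the torsion subgroup of order $m$ and its complement being uniquely divisible.
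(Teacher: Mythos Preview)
Your proposal is correct and follows essentially the same route as the paper: invoke \autoref{prop:Kato} to see $F$ is $\fB_2$, apply \autoref{thm:main} for the divisibility when $n>2$, use \autoref{prop:surj} for the splitting $CH^{d+2}(X,2)\simeq K_2^M(F)\oplus A^{d+2}(X,2)$, and then decompose $K_2^M(F)$ via Moore's theorem. The paper simply cites \cite[Chapter~IX, Theorems~4.3 and~4.7]{FV02} for the decomposition $K_2^M(F)\simeq \mu(F)\oplus mK_2^M(F)$ with $mK_2^M(F)$ uncountable and uniquely divisible, whereas you spell out the identification of the uniquely divisible summand with $mK_2^M(F)$ directly; this is a cosmetic difference only.
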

\begin{proof}
Note that the local field $F$ is a $\fB_2$-field (\autoref{prop:Kato}). 
Hence, $CH^{d+n}(X,n)$ is divisible for $n>2$ by \autoref{thm:main}. 
By Moore's theorem, we have 
$K_2^M(F) \simeq \mu(F) \oplus m K_2^M(F)$, 
where $mK_2^M(F)$ is uncountable and uniquely divisible and $m = \# \mu(F)$ (\cite[Chapter IX, Theorems 4.3 and 4.7]{FV02}). 
From \autoref{cor:main}, there is a decomposition 
\[
 CH^{d+2}(X,2) \simeq K_2^M(F) \oplus A^{d+2}(X,2).
\]
This proves the claim.
\end{proof}

\autoref{cor:CH} can be generalized to $N$-dimensional local fields as follows.

\begin{cor}\label{cor:CH_high}
    Let $F$ be an $N$-dimensional local field and 
    let $X$ be a smooth projective and geometrically irreducible scheme over $F$ of $d = \dim(X)$. 
    Then, $CH^{d+n}(X,n)$ is divisible for $n>N+1$.
    If we further assume $\delta(X/F)=1$,
    then there is an isomorphism 
    \[
    CH^{d+N+1}(X,N+1) \simeq \mu(F)\oplus mK_{N+1}^M(F)\oplus A^{d+N+1}(X, N+1).
    \]
    Here, $m:=\#\mu(F)$, $mK_{N+1}^M(F)$ is a divisible group and $A^{d+N+1}(X,N+1)$ is divisible.
    \end{cor}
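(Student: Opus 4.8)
The plan is to isolate the structure of the single Milnor $K$-group $K_{N+1}^M(F)$ and feed it into the main theorem. By \autoref{prop:Kato} the $N$-dimensional local field $F$ is a $\fB_{N+1}$-field, hence a $\fB_{N+1}(l)$-field for every prime $l$. Applying \autoref{thm:main} with $s=N+1$ therefore gives that $CH^{d+n}(X,n)$ is $l$-divisible for every $l$ as soon as $n\ge N+2$, which is the asserted divisibility for $n>N+1$. In the critical degree $n=N+1$, \autoref{prop:surj} (with $s=N+1$, using that $X$ is smooth) yields a splitting
\[
CH^{d+N+1}(X,N+1)\simeq K_{N+1}^M(F)\oplus A^{d+N+1}(X,N+1)
\]
with $A^{d+N+1}(X,N+1)$ divisible. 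Thus everything reduces to a decomposition $K_{N+1}^M(F)\simeq \mu(F)\oplus mK_{N+1}^M(F)$ with $mK_{N+1}^M(F)$ divisible, which substituted into the display above proves the corollary.

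Next I would reduce this $K$-theoretic decomposition to two facts about the top Milnor $K$-group of a higher local field: (I) the torsion subgroup of $K_{N+1}^M(F)$ equals $\mu(F)$, and (II) $mK_{N+1}^M(F)$ is divisible. Granting these, the splitting is purely formal, exactly as in \autoref{cor:CH}: since $\mu(F)$ is finite cyclic of order $m$, any torsion element of the form $y=mx$ satisfies $m^2x=0$, so by (I) $x\in\mu(F)$ and hence $y=mx=0$; thus (I) forces $mK_{N+1}^M(F)$ to be torsion-free, and with (II) it is uniquely divisible, hence an injective abelian group. Consequently $K_{N+1}^M(F)=mK_{N+1}^M(F)\oplus B$, and $mB\subseteq mK_{N+1}^M(F)\cap B=0$ shows $B$ is $m$-torsion; comparing with (I) gives $B=\mu(F)$, the desired internal direct sum. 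For $N=1$ this is precisely the combination of Moore's and Sivitskii's theorems recorded in \autoref{cor:CH} (\cite{FV02}).

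The real content is therefore (I) and (II), the higher-dimensional analogue of Moore's theorem. For a prime $l$ different from the residue characteristic $p$, the norm residue isomorphism (\cite{Wei09}) identifies $K_{N+1}^M(F)/l^r$ with $H^{N+1}(F,\mu_{l^r}^{\otimes (N+1)})$, and I would run this through Kato's higher local duality (\cite{Kat78}): after the appropriate Tate twist, $H^{N+1}(F,\mu_{l^r}^{\otimes(N+1)})$ is dual to $H^0(F,\mu_{l^r}^{\otimes(-1)})$, whose order is that of $\mu_{l^r}(F)=\mu(F)[l^r]$. Combined with the parallel computation of the $l$-torsion, this shows that the $l$-primary part of $K_{N+1}^M(F)$ is $\mu(F)\{l\}$ together with a uniquely $l$-divisible complement, giving the $l\neq p$ parts of (I) and (II).

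The hard part will be the $p$-primary behaviour, where the cohomological methods above break down. In equal characteristic $p$ one uses that $\mu(F)$ has trivial $p$-part, so it suffices to see that $K_{N+1}^M(F)$ is $p$-torsion-free and $p$-divisible, which follows from the description of $K_{*}^M/p$ via logarithmic differential forms; in the mixed-characteristic case one must instead control the wild part of $K_{N+1}^M(F)$ and the genuine $p$-power roots of unity (the phenomenon already visible for $\Qp(\zeta_p)$; cf.\ the remark after \autoref{prop:surj}). In both situations I would extract the required $p$-part of the structure of $K_{N+1}^M(F)$ from the higher local class field theory of \cite{Kat78} (and \cite{FV02}); this is where essentially all of the difficulty of the corollary is concentrated, the rest being the formal splitting of the second paragraph and the divisibility already supplied by \autoref{thm:main}.
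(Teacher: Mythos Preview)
Your reduction to the structure of $K_{N+1}^M(F)$ via \autoref{prop:Kato}, \autoref{thm:main}, and \autoref{prop:surj} is exactly what the paper does. The divergence is in how you attack the decomposition $K_{N+1}^M(F)\simeq\mu(F)\oplus mK_{N+1}^M(F)$.

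You aim for (I) that the torsion subgroup of $K_{N+1}^M(F)$ equals $\mu(F)$, together with (II), and deduce that $mK_{N+1}^M(F)$ is \emph{uniquely} divisible, hence injective, hence a direct summand. The paper proves strictly less: only that $mK_{N+1}^M(F)$ is divisible, which is all the corollary asserts. It bypasses (I) entirely. In characteristic $0$ it first computes $K_{N+1}^M(F)/m\simeq\mu(F)$ via the norm residue isomorphism and Kato's duality, and then exhibits a single $m$-torsion element $\{\alpha,\zeta_m\}\in K_{N+1}^M(F)$ whose class generates $K_{N+1}^M(F)/m$: here $\alpha\in K_N^M(F)$ is chosen so that its image under the higher reciprocity map $\rho\colon K_N^M(F)\to\Gal(F^{\ab}/F)$ generates $\Gal(F(\zeta_{m^2})/F)$, which forces the Hilbert symbol $(\alpha,\zeta_m)_m=\zeta_m$. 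This immediately yields the internal direct sum $K_{N+1}^M(F)=\langle\{\alpha,\zeta_m\}\rangle\oplus mK_{N+1}^M(F)$. Divisibility of $mK_{N+1}^M(F)$ is then checked prime by prime: for $l\mid m$ from the exact order of this generator, and for $l\nmid m$ by restricting to $F(\mu_l)$ and using $(K_{N+1}^M(F(\mu_l))/l)^{\Gal(F(\mu_l)/F)}=1$. In characteristic $p$ the paper instead quotes Fesenko's result that the kernel of $K_{N+1}^M(F)\twoheadrightarrow K_{N+1}^{\mathrm{top}}(F)\simeq\bF_q^\times$ is divisible.

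Your route would give the stronger conclusion that $mK_{N+1}^M(F)$ is torsion-free, but that is exactly where your sketch is thinnest: the mixed-characteristic $p$-part of (I) is not addressed beyond a pointer to \cite{Kat78}, and the corollary does not claim it. The paper's explicit-generator argument is more economical because it never needs to rule out extra torsion in $mK_{N+1}^M(F)$.
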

\begin{proof}
    By \autoref{prop:Kato}, $F$ is a $\fB_{N+1}$-field. 
    From \autoref{thm:main} and \autoref{cor:main}, 
    $CH^{d+n}(X,n)$ is divisible for any $n> N+1$, and 
    the structure morphism $f\colon X\to \Spec(F)$ induces a surjective homomorphism  
    \[
      f_\ast^{CH}\colon CH^{d+N+1}(X,N+1) \to K_{N+1}^M(F)
    \]
    whose kernel is divisible.
    Therefore, 
    $CH^{d+N+1}(X,N+1)$ decomposes as the direct sum of a divisible group $A^{d+N+1}(X,N+1)$ and  $K_{N+1}^M(F)$ (\cite[\href{https://stacks.math.columbia.edu/tag/01D6}{Section 01D6}]{stacks-project}). 
    Now, the claim follows from the following proposition.
\end{proof}

\begin{prop}
    Let $F$ be an $N$-dimensional local field and let $\mu(F)$ be the group of roots of unity in $F$. Put $m = \# \mu(F)$.
    Then we have
    \[
    K_{N+1}^M(F)\simeq \mu(F)\oplus mK_{N+1}^M(F)
    \]
    where $mK_{N+1}^M(F)$ is divisible.
\end{prop}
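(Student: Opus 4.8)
The plan is to reduce the asserted decomposition to two facts about $K_{N+1}^M(F)$: that its torsion subgroup is $\mu(F)$ (finite cyclic of order $m$), and that the quotient $K_{N+1}^M(F)/\mathrm{tors}$ is divisible. Granting these, the short exact sequence
\[
0 \to \mu(F) \to K_{N+1}^M(F) \to K_{N+1}^M(F)/\mathrm{tors} \to 0
\]
splits because a divisible abelian group is injective (exactly as used in the proof of \autoref{prop:surj}); writing $K_{N+1}^M(F)\simeq \mu(F)\oplus D$ with $D$ divisible, one has $mK_{N+1}^M(F)=mD=D$ since $m\mu(F)=0$ and $mD=D$, which gives the stated isomorphism with $mK_{N+1}^M(F)$ divisible. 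Thus everything reduces to computing the torsion and proving divisibility of the quotient.

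Next I would dispose of the prime-to-$p$ part, where $p=\ch(F_0)$ is the characteristic of the last residue field. For a prime $l\neq p$ we have $\cd_l(F)\le N+1$ (\autoref{prop:Kato}, \autoref{lem:KatRem1}), and the norm residue isomorphism identifies $K_{N+1}^M(F)/l^r\simeq H^{N+1}(F,\mu_{l^r}^{\otimes (N+1)})$. Using Kato's duality for $N$-dimensional local fields, under which $H^{N+1}(F,\mu_{l^r}^{\otimes N})\simeq \Z/l^r$, one computes $H^{N+1}(F,\mu_{l^r}^{\otimes(N+1)})\simeq \Z/l^{\min(r,a_l)}$, where $l^{a_l}$ is the order of $\mu(F)\{l\}$. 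As a sanity check this recovers Moore's description for $N=1$. This already shows $K_{N+1}^M(F)$ is $l$-divisible for $l\nmid m$, and for $l\mid m$, $l\neq p$ it pins the $l$-primary structure down to $\mu(F)\{l\}$ together with an $l$-divisible, $l$-torsion-free part.

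The heart of the argument, and its main obstacle, is the wild part at $l=p$ together with the clean identification of the full torsion; for this I would argue by induction on $N$. The base cases are $N=0$, where $F$ is finite and $K_1^M(F)=F^\times=\mu(F)$ with $mK_1^M(F)=0$, and $N=1$, which is Moore's theorem (\cite[Chapter IX, Theorems 4.3 and 4.7]{FV02}). For the inductive step, let $k$ be the residue field of $F$, an $(N-1)$-dimensional local field, and use the split exact sequence
\[
0 \to U^{(1)}K_{N+1}^M(F) \to K_{N+1}^M(F) \xrightarrow{(s_\pi,\,\partial)} K_{N+1}^M(k)\oplus K_N^M(k) \to 0
\]
attached to the complete discrete valuation field $F$ with a choice of uniformizer $\pi$. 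Here $K_{N+1}^M(k)$ is divisible by \autoref{prop:Kato} applied to $k$, the summand $K_N^M(k)\simeq \mu(k)\oplus(\text{divisible})$ by the induction hypothesis, and the prime-to-$p$ roots of unity of $F$ and $k$ agree by Hensel's lemma.

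The difficulty is then to show that the filtration piece $U^{(1)}K_{N+1}^M(F)$ is divisible apart from contributing exactly the $p$-primary part of $\mu(F)$ — the extra $p$-power roots of unity created by wild ramification, as in $\mu_p\subset \Qp(\zeta_p)$ — and to verify that no spurious torsion is concealed in the divisible summands $K_{N+1}^M(k)$ and the divisible part of $K_N^M(k)$. This wild-symbol analysis is where the real work lies: it is the higher-dimensional incarnation of the hard part of Moore's theorem. I would control it by combining the $p$-adic structure of $U^{(1)}K_{N+1}^M(F)$ with the prime-to-$p$ computation of the previous paragraph, thereby concluding $\mathrm{tors}\,K_{N+1}^M(F)\simeq\mu(F)$ and divisibility of $K_{N+1}^M(F)/\mathrm{tors}$, which by the first paragraph completes the proof.
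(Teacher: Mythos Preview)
Your reduction overshoots: you aim to show that the torsion subgroup of $K_{N+1}^M(F)$ is exactly $\mu(F)$, but the proposition only asserts a decomposition $K_{N+1}^M(F)\simeq\mu(F)\oplus D$ with $D=mK_{N+1}^M(F)$ divisible, and this does not force $D$ to be torsion-free (consider $A=\Z/m\oplus\Q/\Z$, where $mA=\Q/\Z$ is divisible yet all of $A$ is torsion). More concretely, in your second paragraph you compute $K_{N+1}^M(F)/l^r\simeq\Z/l^{\min(r,a_l)}$ and then claim this ``pins the $l$-primary structure down to $\mu(F)\{l\}$ together with an $l$-divisible, $l$-torsion-free part''. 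But knowing all the quotients $A/l^rA$ tells you nothing about $A[l]$: the groups $\Z/l^{a_l}\oplus(\text{uniquely $l$-divisible})$ and $\Z/l^{a_l}\oplus(\Q/\Z)\{l\}$ have identical quotients modulo every $l^r$. So the ``$l$-torsion-free'' assertion is unjustified, and with it the whole inductive plan---including the unworked wild-symbol analysis of $U^{(1)}K_{N+1}^M(F)$, which you yourself flag as the crux---is aimed at a statement strictly stronger than the one being proved.

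The paper sidesteps all of this by never computing the torsion subgroup. In characteristic zero, after establishing $K_{N+1}^M(F)/m\simeq\mu(F)$ via the norm residue isomorphism and Kato's duality (essentially your second-paragraph computation), it exhibits a single $m$-torsion element $\{\alpha,\zeta_m\}$ and uses the higher Hilbert symbol together with the reciprocity isomorphism $K_N^M(F)/N_{F(\zeta_{m^2})/F}K_N^M(F(\zeta_{m^2}))\simeq\Gal(F(\zeta_{m^2})/F)$ from higher local class field theory to show that this element generates $K_{N+1}^M(F)/m$. That immediately yields a section $\Z/m\hookrightarrow K_{N+1}^M(F)$ of the projection, hence the splitting, with no need to rule out extra divisible torsion in $mK_{N+1}^M(F)$; divisibility of $mK_{N+1}^M(F)$ is then checked one prime at a time. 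The positive-characteristic case is handled separately via Fesenko's topological Milnor $K$-theory (the kernel of $K_{N+1}^M(F)\twoheadrightarrow K_{N+1}^{\mathrm{top}}(F)\simeq\bF_q^\times$ is divisible), again avoiding any filtration analysis. What your plan lacks, and what the Hilbert symbol supplies, is an $m$-torsion element mapping to a generator of $K_{N+1}^M(F)/m$; with that in hand the splitting is immediate and no torsion computation is needed.
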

\begin{proof}
Let $\bF_q$ be the last residue field of $F$ of characteristic $p>0$.
Suppose first that $\ch(F)>0$. Then $\mu(F)=\bF_q^\times$ and $m=q-1$.
Let $K_{N+1}^{\mathrm{top}}(F)$ be the topological Milnor $K$-group of $F$ (cf.~\cite[Section 6.3]{Fes00a}).
The kernel of the natural quotient map $K_{N+1}^M(F) \twoheadrightarrow K_{N+1}^{\mathrm{top}}(F)$ 
is divisible (\cite[Section 6.6, Theorem 2]{Fes00a}). 
Hence, we have $K_{N+1}^M(F) = D \oplus K_{N+1}^{\mathrm{top}}(F)$ for a divisible group $D$. 
It is also known that $K_{N+1}^{\mathrm{top}}(F) \simeq \bF_q^{\times}$, and thus $(q-1)K_{N+1}^M(F)=D$ is divisible.  

In the case $\ch(F)=0$, we have $m=p^r(q-1)$ for some $r\geq 0$.
By the norm residue isomorphism theorem (the Bloch-Kato conjecture) and \cite[Chapter II, Section 1.1, Theorem 3]{Kat80}, we have
\begin{align*}
K_{N+1}^M(F)/m 
&\simeq H^{N+1}(F,\mu_m^{\otimes (N+1)})\\
&\simeq H^{N+1}(F,\mu_m^{\otimes N})(1)\\
&\simeq \Z/m(1)= \mu(F).
\end{align*}
Let $\zeta_m\in F$ be a primitive $m$-th root of unity.
We now show that an $m$-torsion element $\{\alpha,\zeta_m\}\in K^M_{N+1}(F)$ for some $\alpha\in K^M_N(F)$ is a generator of $K^M_{N+1}(F)/m$.
It is clear that an element of the form $\{\alpha, \zeta_m\}$ is $m$-torsion.
We will show that there exists an element $\alpha \in K^M_N(F)$ such that $\{\alpha, \zeta_m\}$ is a generator of $K^M_{N+1}(F)/m$.
We consider the Hilbert symbol map
\[
    K^M_N(F)/m\times F^\times/m \to \mu_m;\ (\alpha, \beta)\mapsto (\alpha, \beta)_m := \gamma^{\rho(\alpha)-1},\ \gamma^m=\beta,
\]
where $\rho\colon K^M_N(F)\to \Gal(F^{\ab}/F)$ is the reciprocity map for higher local class field theory (\cite{Kat80}).
For $\beta=\zeta_m$, we take $\gamma=\zeta_{m^2}$, a primitive $m^2$-th root of unity.
Then for a cyclic extension $F(\zeta_{m^2})/F$ of degree $m$, the reciprocity map $\rho$ induces an isomorphism
\[
    K^M_N(F)/N_{F(\zeta_{m^2})/F}K^M_N(F(\zeta_{m^2}))\simeq \Gal(F(\zeta_{m^2})/F)=\langle \sigma \rangle.
\]
Here $\sigma\in \Gal(F(\zeta_{m^2})/F)$ is a generator mapping $\zeta_{m^2}$ to $\zeta_{m^2}^{1+m}$.
Thus there exists an element $\alpha\in K^M_N(F)$ such that $\rho(\alpha)=\sigma$.
Then we have $(\alpha,\zeta_m)_m=\zeta_m$ and thus the element $\{\alpha, \zeta_m\}$ is a generator of $K^M_{N+1}(F)/m$.

For a prime $l$ dividing $m$, let $e$ denote the $l$-adic valuation of $m$.
Let $e':=m/l^e$.
Then we have an exact sequence
\[
K_{N+1}^M(F)/l^{e}\xrightarrow{l} K_{N+1}^M(F)/l^{e+1}\to K_{N+1}^M(F)/l\to 0.
\]
Since the element $\{\alpha, \zeta_m\}$ is a generator of $K^M_{N+1}(F)/m$, an element $\{\alpha, \zeta_m^{e'}\}$ is a generator of $K^M_{N+1}(F)/l^e$ and hence the order of the image of $K_{N+1}^M(F)/l^{e}\xrightarrow{l} K_{N+1}^M(F)/l^{e+1}$ is less than or equal to $l^{e-1}$.
Thus we have $\# K_{N+1}^M(F)/l^{e+1} \le l^{e-1}\cdot l=l^{e}$, since $K_{N+1}^M(F)/l\simeq \mu_{l}$.
Therefore the canonical surjection $K_{N+1}^M(F)/l^{e+1}\to K_{N+1}^M(F)/l^{e}$ is an isomorphism, and hence $l^e K_{N+1}^M(F)$ is $l$-divisible.

For any prime $l$ with $(l, m)=1$, take $E=F(\mu_l)$.
Since the composite map 
\[
N_{E/F}\circ \res_{E/F}\colon K_{N+1}^M(F)/l\to K_{N+1}^M(E)/l\to K_{N+1}^M(F)/l 
\]
is the multiplication by $[E:F]$ and $[E:F]$ is coprime to $l$, $\res_{E/F}: K_{N+1}^M(F)/l\to K_{N+1}^M(E)/l$ is injective.
Moreover, $K_{N+1}^M(F)/l$ is a subgroup of $(K_{N+1}^M(E)/l)^{\Gal(E/F)}$.
On the other hand, we have
\[
(K_{N+1}^M(E)/l)^{\Gal(E/F)}\simeq H^{N+1}(E,\mu_l^{\otimes (N+1)})^{\Gal(E/F)}\simeq \mu_l(E)^{\Gal(E/F)}=1.
\] 
Hence $K_{N+1}^M(F)/l$ is trivial and thus $K_{N+1}^M(F)$ is $l$-divisible.
This completes the proof.
\end{proof}

Comparing \autoref{prop:Akh04_7.2}, 
for a global field $F$, 
we obtain the following more precise results 
on the torsion group $CH^{d+2}(X,2)$.

\begin{cor}\label{cor:CH_FF}
    Let $F$ be a global field 
    and $X$ a smooth projective and geometrically irreducible scheme over $F$ of $d = \dim(X)$. 
    Assume 
    $\delta(X/F)=1$
    and that $F$ is totally imaginary if $\ch(F) = 0$. 
    Then, 
    \[
    CH^{d+2}(X,2) \simeq K_2^M(F) \oplus A^{d+2}(X,2)
    \]
    where $A^{d+2}(X,2)$ is divisible, 
    and $K_2^M(F)$ is torsion. 
    Moreover, when $\ch(F) = p>0$, $K_2^M(F)$ is prime to $p$-torsion.
\end{cor}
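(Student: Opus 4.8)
The plan is to obtain the first two assertions directly from the structural results already in hand, and to read off the torsion statements from the known arithmetic of $K_2^M$ of global fields.

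First I would verify that $F$ is a $\fB_2$-field: when $\ch(F)=0$ this is exactly the hypothesis that $F$ be totally imaginary, and when $\ch(F)=p>0$ it holds for any function field of one variable over a finite field; both cases are covered by \autoref{lem:global_B2}. Taking $s=2$, the splitting statement \autoref{prop:surj} then applies to the smooth projective geometrically irreducible scheme $X$ and yields at once the decomposition $CH^{d+2}(X,2)=K_2^M(F)\oplus A^{d+2}(X,2)$ together with the divisibility of $A^{d+2}(X,2)$. This settles the first clause.

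For the torsion assertion I would apply \autoref{prop:Akh04_7.2} with $n=2$: over any global field $CH^{d+2}(X,2)$ is a torsion group (the case $n=2$ of both parts). Since $K_2^M(F)$ is a direct summand of $CH^{d+2}(X,2)$ by the decomposition just obtained, it is itself torsion; equivalently one may take $X=\Spec(F)$ with $d=0$ and use $CH^{2}(\Spec(F),2)\simeq K_2^M(F)$. Note that the totally imaginary hypothesis plays no role here --- it is needed only to guarantee the $\fB_2$ property, and hence the splitting.

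It remains to treat the case $\ch(F)=p>0$ and to show that $K_2^M(F)$ has no $p$-primary torsion. By \cite[Chapter~I, Corollary~5.13]{BT73} the group $K_2^M(F)$ is $p$-divisible. The subtle point, which I expect to be the only real obstacle, is that $p$-divisibility of a torsion group does not by itself exclude $p$-torsion: the $p$-primary component of a $p$-divisible torsion group is a divisible $p$-group, i.e.\ a direct sum of copies of $\Qp/\Zp$, and these need not vanish. Hence the essential input is the vanishing $K_2^M(F)[p]=0$, which in characteristic $p$ is Izhboldin's theorem on the absence of $p$-torsion in Milnor $K$-theory (for the function-field case one may instead unwind the Bass--Tate tame-symbol description, where the torsion is assembled from the multiplicative groups $\kappa(x)^\times$ of the finite residue fields, whose orders are prime to $p$, the constant contribution $K_2^M(\bF_q)$ being trivial). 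Combining the statement that $K_2^M(F)$ is torsion with the vanishing of its $p$-torsion shows that the torsion of $K_2^M(F)$ is prime to $p$, which completes the argument.
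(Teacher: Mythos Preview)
Your argument is correct. The first half (the decomposition and the divisibility of $A^{d+2}(X,2)$) is identical to the paper's proof: apply \autoref{lem:global_B2} and then \autoref{prop:surj} with $s=2$.

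For the torsion and prime-to-$p$ assertions you take a different route. You deduce that $K_2^M(F)$ is torsion from \autoref{prop:Akh04_7.2} (already available in the paper) by reading it off the direct summand, whereas the paper instead re-derives the structure of $K_2^M(F)$ directly via the tame-symbol/localization sequences: in the number-field case the sequence $0\to K_2(\cO_F)\to K_2^M(F)\to\bigoplus_v\bF_v^\times\to 0$ together with Garland's finiteness of $K_2(\cO_F)$; in the function-field case the localization sequence for the associated curve $C$, using that $K_2(C)$ is finite of order prime to $p$. For the prime-to-$p$ statement you invoke Izhboldin's theorem $K_2^M(F)[p]=0$, while the paper's explicit localization sequence already exhibits $K_2^M(F)$ as an extension of $\bigoplus_x\bF_q(x)^\times$ (each of order prime to $p$) by the finite prime-to-$p$ group $K_2(C)$, so no appeal to Izhboldin is needed. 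Your approach is more economical in that it reuses \autoref{prop:Akh04_7.2}, at the cost of importing Izhboldin as an outside input; the paper's approach is more self-contained and gives slightly finer structural information about $K_2^M(F)$.
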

\begin{proof}
%As we have $\cd(F) \le 2$ and hence 
The field $F$ is a $\fB_2$-field by \autoref{lem:global_B2}.
By \autoref{cor:main}, 
$CH^{d+2}(X,2)$ is the direct sum of $K_2^M(F)$ and the divisible group $A^{d+2}(X,2)$. 

If $\ch(F) = 0$ (and $F$ is totally imaginary in our setting), then 
the tame symbol map $\partial$ gives a short exact sequence 
\[
0\to K_2(\cO_F) \to K_2^M(F) \xrightarrow{\partial} \bigoplus_v \bF_v^{\times} \to 0,
\]
where $v$ runs through the set of finite places of $F$ 
and $\bF_v$ is the residue field of $F$ at $v$ (cf.~\cite[Chapter V, Section 6, Theorem 6.8]{Wei13}). 
Here, the kernel of the map $\partial$ coincides with the Quillen $K$-group $K_2(\cO_F)$ of the ring of integers 
$\cO_F$ of $F$. 
By a theorem of Garland, $K_2(\cO_F)$ is finite and 
$K_2(F) = K_2^M(F)$ is torsion. 

If $\ch(F) = p>0$,  
    let $C$ be the smooth projective curve over $\bF_q$ whose function field is $F = \bF_q(C)$. 
The localization sequence of the Quillen $K$-theory gives 
\[
0 \to K_2(C) \to K_2(F) \xrightarrow{\partial} \bigoplus_{x\in C_0}\bF_q(x)^{\times} \to K_1(C) \to \bF_q^\times \to 0.
\]
The cokernel of $\partial$ is $\bF_q^\times$  and the kernel $K_2(C)$ of $\partial$ 
is finite of order prime to $p$ (\cite[Chapter II, Section 2]{BT73}, see also \cite[Section 5.5]{Wei05}).
In particular, $K_2^M(F) = K_2(F)$ is torsion of order prime to $p$. 
\end{proof}

For example, 
when $X$ is defined over $\Q$ with $\delta(X/\Q) =1$, 
the tame kernel $K_2(\Z)$ is isomorphic to $\Z/2$, 
and hence we obtain 
$CH^{d+2}(X,2)/l \simeq \bigoplus_p \bF_p^{\times}/l$ 
for any prime $l>2$. 

Finally, we conclude this section by stating the result for the case where $F=\R$.
The real field $\R$ is a $\fB_0(l)$-field for any $l\neq 2$ and it is not a $\fB_0(2)$-field.
In fact, $K^M_n(\R)/2\simeq \Z/2$  (\cite[Chapter III, Example 7.2 (c)]{Wei13}). 
The lemma below implies that $A^{d+n}(X,n)/2$ may become a non-trivial finite group, 
namely, the map $f_{\ast}^{CH}\colon CH^{d+n}(X,n)/2 \to K_n^M(\R)/2$ may not be injective. 
Moreover, 
one can show that the map $f_{\ast}^{CH}$ modulo 2 is not surjective if $X(\R) = \emptyset$. 

\begin{lem}\label{lem:R}
    Let $X$ be a smooth projective and geometrically irreducible scheme over $\R$ of dimension $d$. 
    \[
    CH^{d+n}(X,n)/2 \simeq (\Z/2)^{\# \pi_0(X(\R))}
    \]
    for any $n\ge 1$, where $\pi_0(X(\R))$ denotes the set of connected components of $X(\R)$.
\end{lem}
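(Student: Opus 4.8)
The plan is to compute $CH^{d+n}(X,n)/2$ via the Mackey-product description, exploiting that the only finite extensions of $\R$ are $\R$ and $\C$. By \autoref{thm:Akh2} and the isomorphism $\Gm^{\otimesM n}/2 \simeq K_n^M/2$ of \cite[Theorem~4.5]{Hir24} (used exactly as in the proof of \autoref{lem:norm}), I would first rewrite
\[
CH^{d+n}(X,n)/2 \simeq \bigl(\underline{CH}_0(X)/2 \otimesM K_n^M/2\bigr)(\R).
\]
Evaluated at $\R$ this product is generated by symbols $\set{a,\beta}_{E/\R}$ with $E\in\set{\R,\C}$, $a\in CH_0(X_E)/2$ and $\beta\in K_n^M(E)/2$, subject only to the projection-formula relations attached to $\C/\R$.

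Next I would separate the two cases. For $n\ge 1$ one has $K_n^M(\C)/2=0$, since $\C^\times$ is $2$-divisible and hence so is $K_n^M(\C)$ (cf.\ \autoref{lem:KatLem10} applied to the $\fB_0$-field $\C$). Thus all symbols with $E=\C$ vanish, and the group is generated by $\set{a,\epsilon}_{\R/\R}$ with $\epsilon$ a generator of $K_n^M(\R)/2\simeq\Z/2$; bilinearity identifies it with a quotient of $CH_0(X)/2$. Writing $\pi\colon X_\C\to X$, so that the transfer $\tr_{\C/\R}$ on $\underline{CH}_0(X)$ is the push-forward $\pi_\ast$, the projection formula gives
\[
\set{\tr_{\C/\R}(\zeta),\epsilon}_{\R/\R} = \set{\zeta,\res_{\C/\R}(\epsilon)}_{\C/\R} = 0
\]
for every $\zeta\in CH_0(X_\C)/2$, because $\res_{\C/\R}(\epsilon)\in K_n^M(\C)/2=0$. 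Hence
\[
CH^{d+n}(X,n)/2 \simeq \Coker\bigl(\pi_\ast\colon CH_0(X_\C)\to CH_0(X)\bigr) \qquad(n\ge 1),
\]
the cokernel already being $2$-torsion because $\pi_\ast\pi^\ast=2$ forces $2\,CH_0(X)\subseteq\img(\pi_\ast)$. For $n=0$ the same computation instead returns $CH_0(X)/2$ directly, as $K_0^M/2$ is the unit $\Z/2$ and the complex symbols no longer die.

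It then remains to identify the relevant group with $(\Z/2)^{\#\pi_0(X(\R))}$, and here I would invoke the real cycle class map
\[
\theta\colon CH_0(X)\to H^0(X(\R),\Z/2)=(\Z/2)^{\#\pi_0(X(\R))}
\]
sending a real closed point to the characteristic function of its connected component and a complex closed point to $0$. By construction $\theta$ kills every complex closed point and every $2[y]$ with $y$ real, so it factors through $\Coker(\pi_\ast)$; surjectivity is clear by choosing one real point per component. The hard part will be the injectivity of the induced map, i.e.\ that a real $0$-cycle of even degree on each component of $X(\R)$ is, modulo $2$ and modulo norms from $\C$, rationally trivial. This is not formal---it rests on moving real $0$-cycles within a component along real arcs while controlling the conjugate contributions---and I would import it as the theorem of Colliot-Th\'el\`ene--Ischebeck on zero-cycles of real varieties rather than reprove it. Combined with the reduction above this yields the isomorphism for $n\ge 1$, and for $n=0$ the same map applied to $CH_0(X)/2$ gives the statement once $X(\R)\neq\emptyset$.

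A caveat I would record is the boundary case $n=0$ with $X(\R)=\emptyset$: then there is no quotient by $\pi_\ast$, and $CH_0(X)/2$ can be nonzero while $\#\pi_0(X(\R))=0$ (an anisotropic conic has $CH_0(X)/2\simeq\Z/2$). Thus the genuinely uniform statement is the one for $n\ge 1$, where the extra $\Z/2$ produced by the degree is absorbed into $\img(\pi_\ast)$; the main obstacle throughout is the real-geometric injectivity input, everything else being the formal Mackey-functor bookkeeping already developed in the paper.
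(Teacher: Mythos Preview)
Your overall route---compute the Mackey product modulo $2$ over $\R$ by exploiting $K_n^M(\C)/2=0$, then invoke Colliot-Th\'el\`ene--Ischebeck---matches the paper's, but there is a genuine gap at the very first step. \autoref{thm:Akh2} identifies $CH^{d+n}(X,n)$ with the \emph{Somekawa} $K$-group $K(\R;\underline{CH}_0(X),\Gm,\ldots,\Gm)$, which is only a \emph{quotient} of the Mackey product $\big(\underline{CH}_0(X)\otimesM\Gm^{\otimesM n}\big)(\R)$; compare the diagram in the proof of \autoref{thm:main}, where the vertical arrow from the Mackey product to $CH^{d+s}(X,s)/l^r$ is explicitly marked as merely surjective. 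Consequently your computation only yields a surjection $\Coker(\pi_\ast)\twoheadrightarrow CH^{d+n}(X,n)/2$, hence the upper bound $\lvert CH^{d+n}(X,n)/2\rvert\le 2^{\#\pi_0(X(\R))}$. You never produce a map \emph{out of} $CH^{d+n}(X,n)/2$ to establish the lower bound, and nothing a priori prevents the Weil-reciprocity relations in the Somekawa $K$-group from collapsing the group further.

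The paper supplies exactly this missing piece. After splitting off the degree (here $X(\R)\neq\emptyset$ is genuinely used), it introduces the map of Mackey functors $\iota\colon K_n^M/2\to\Z/2$ coming from $K_n^M(\R)/2\simeq\Z/2$. This $\iota$ descends to the Somekawa level and gives a map $A^{d+n}(X,n)/2\to K(\R;\underline{A}_0(X),\Z/2)=A_0(X)/2$; the composite $A_0(X)/2\to A^{d+n}(X,n)/2\to A_0(X)/2$ is then the identity, which forces $A^{d+n}(X,n)/2\simeq A_0(X)/2$. Your real cycle class map $\theta$ is morally the same device, but you would need to verify that it factors through the Somekawa $K$-group rather than merely the Mackey product, and that is precisely what $\iota$ accomplishes. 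Your caveat about $n=0$ with $X(\R)=\emptyset$ is well taken: the paper's own argument in that case appeals to the divisibility of $K_n^M(\C)$, which fails for $n=0$, and your anisotropic-conic example shows the stated lemma is indeed false there.
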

\begin{proof}
First, we suppose $X(\R) = \emptyset$. 
In this case, 
$\bigoplus_{x\in X_0}K_n^M(\R(x))/2 = 0$ because $K_n^M(\C)$ is divisible. 
By \eqref{eq:MK-Chow}, $CH^{d+n}(X,n)$ is $2$-divisible. 

By contrast, 
in the case where $X(\R)\neq \emptyset$, 
we have decompositions 
$CH_0(X)\simeq \Z\oplus A_0(X)$ 
and $CH^{d+n}(X,n) \simeq K_n^M(\R) \oplus A^{d+n}(X,n)$  
which induce the following isomorphism by \autoref{thm:Akh2};
\[
K(\R; \underline{A}_0(X),  \overbrace{\Gm,\ldots, \Gm}^n)/2\simeq A^{d+n}(X,n)/2. 
\]
% From the isomorphisms $K_n^M(\R)/2 \simeq \Z/2$, 
% there is a map of Mackey functors over $\R$
% \begin{equation}\label{eq:iota}
% \iota\colon  \Gm^{\otimesM n}/2\simeq K^M_n/2\to \Z/2.
% \end{equation}
Since $\Z$ is the unit object with respect to Mackey product, we have
\[
(\underline{A}_0(X)\otimesM\Z/2)(\R)=A_0(X)/2=K(\R;\underline{A}_0(X),\Z)/2.
\]
We also have 
\[
\Phi\colon (\underline{A}_0(X)\otimesM \Gm^{\otimesM n}/2)(\R)\xrightarrow{\simeq} A_0(X)/2,
\] 
since $(\underline{A}_0(X)\otimesM \Gm^{\otimesM n}/2)(\C)=0$.

% We show the following isomorphism 
% \begin{equation}\label{eq:WR}
% (\underline{CH}_0(X)\otimesM \Gm^{\otimesM n})(\R)/2 \simeq K(\R;\underline{CH}_0(X),\overbrace{\Gm,\ldots,\Gm}^n)/2
% \end{equation}
%for any $n\ge 1$. 
It remains to show that the relations of the Weil reciprocity law 
in $K(\R;\underline{A}_0(X),\overbrace{\Gm,\ldots,\Gm}^n)$ vanish under this isomorphism $\Phi$.
Let $K=\R(C)$ be the function field of a smooth projective curve $C$ over $\R$. 
After grouping the multiplicative factors, a relation of the Weil reciprocity law 
is represented modulo $2$ by 
\begin{equation}\label{eq:WR}
\sum_{x\in C_0} \set{s_x(\Gamma),\partial_x(\alpha)}_{\R(x)/\R}, 
\end{equation}
where $\Gamma\in A_0(X_K), \alpha\in K_{n+1}^M(K)/2$, 
$s_x\colon CH_0(X_{\R(C)})\to CH_0(X_x)$ is the specialization map, 
and 
$\partial_x\colon K_{n+1}^M(K)/2\to K_n^M(\R(x))/2$ is the boundary map. 
Here, only finitely many terms are non-zero.
Since $K(\sqrt{-1})$ is a function field of one variable over $\C$, we have $\cd_2(K(\sqrt{-1}))=1.$ 
The norm residue isomorphism theorem and the exact sequence associated with $K(\sqrt{-1})/K$ 
(\cite[Theorem 2.1]{OVV07})
imply that multiplication by $\set{-1}$ is surjective:
\[
K_q^M(K)/2 \xrightarrow{\set{-1}\cdot}K_{q+1}^M(K)/2\qquad (q\ge 1).
\]
Iterating this surjectivity, there exists $a\in K^\times$ such that 
$\alpha=\set{a,-1,\ldots,-1}$. 
Therefore, $\partial_x(\alpha) = \mathrm{ord}_x(a)\set{-1,\ldots,-1}$. 
By the projection formula, the above relation \eqref{eq:WR} is equal to 
\[
\sum_{x\in C_0} \mathrm{ord}_x(a) \set{ N_{\R(x)/\R}\bigl(s_x(\Gamma)\bigr), -1,\ldots,-1 }_{\R/\R} =0, 
\]
since the specialization of zero-cycles gives $\sum_{x\in C_0} \mathrm{ord}_x(a) N_{\R(x)/\R}\bigl(s_x(\Gamma)\bigr) =0$ in $A_0(X)$.

Finally, 
the real cycle class map gives 
$\mathrm{cl}_{\R}\colon CH_0(X)\to H_0(X(\R),\Z/2)$, 
and we have $CH_0(X)/2 \simeq (\Z/2)^{\#\pi_0(X(\R))}$ 
(\cite[Proposition~3.2]{CI81}). Hence, we have 
\[
A^{d+n}(X,n)/2 \simeq K(\R;\underline{A}_0(X),\Gm,\ldots, \Gm)/2 \xrightarrow[\simeq]{\Phi} A_0(X)/2 \simeq (\Z/2)^{\#\pi_0(X(\R)) -1}.
\]
Together with $K_n^M(\R)/2\simeq\Z/2$, this proves 
\[ 
CH^{d+n}(X,n)/2 \simeq (\Z/2)^{\#\pi_0(X(\R))}.
\]
\end{proof}

%%%%%%%%%%%%%%%%
\section{Kato homology groups}
\label{sec:KH}

In this section we apply our main results to the study of Kato homology groups. 
After reviewing the Kato conjectures, we establish a connection between higher Chow groups and Kato homology groups. 
The core of this section is a theorem proving that for a smooth projective variety over a $\fB_s(l)$-field $F$ for some prime $l\neq \ch(F)$, the map on Kato homology groups induced by the structure morphism is an isomorphism (\autoref{thm:KH}). 
For the case $l = \ch(F)$, see  \cite{HS25}.

%%% Kato homology 
\subsection*{Kato conjectures}
Let $X$ be an excellent scheme over $F$ (\cite[(7.8.5)]{EGAIV}). 
We denote by $X_j$ the set of points $x$ in $X$ with $\dim(\overline{\set{x}}) = j$ and 
by $F(x)$ the residue field at $x$. 
For any $n\ge 0$ and a prime $l$ invertible on $X$, 
we have the following homological complex $KC^{(n)}_{\bullet}(X,\Z/l^r)$ of Bloch--Ogus type (\cite[Proposition~1.7]{Kat86}): 
\[
\xymatrix@C=5mm@R=0mm{
 \cdots \ar[r]^-{\partial} & \displaystyle\bigoplus_{x\in X_{j}}H^{n+j+1}_{l^r}(F(x)) \ar[r]^-{\partial} &\cdots 
 \ar[r]^-{\partial}& \displaystyle\bigoplus_{x\in X_{1}}H^{n+2}_{l^r}(F(x)) \ar[r]^-{\partial} & \displaystyle\bigoplus_{x\in X_{0}}H^{n+1}_{l^r}(F(x)), \\
 & \mbox{\small degree $j$} & & \mbox{\small degree $1$} & \mbox{\small degree $0$} 
}
\]
where 
\[
H_{l^r}^{m+1}(F (x)) := H^{m+1}(F(x), \mu_{l^r}^{\otimes m}). 
\]
The \textbf{Kato homology group} of $X$ (with coefficients in $\Z/l^r$) is defined 
to be the homology group 
\begin{equation}
\label{def:KH}
KH_j^{(n)}(X,\Z/l^r) := H_j(KC_{\bullet}^{(n)}(X,\Z/l^r))
\end{equation}
of the above complex $KC_\bullet^{(n)}(X,\Z/l^r)$. 
In the case of $n=1$ and $X = \Spec(F)$, we have 
\begin{equation}\label{eq:Br}
    KH_0^{(1)}(\Spec(F),\Z/l^r) = H_{l^r}^{2}(F) \simeq \Br(F)[l^r],
\end{equation}
where $\Br(F)[l^r]$ denotes the $l^r$-torsion part of the Brauer group $\Br(F)$ of $F$.

\begin{conj}[{\cite[Conjecture~0.3]{Kat86}}]
\label{conj:Kato}
Let $X$ be a proper smooth connected scheme over a finite field $F$. 
Then, we have 
	\begin{equation}
	\label{eq:KH}
		KH_j^{(0)}(X,\Z/l^r) \xrightarrow{\simeq} \begin{cases}
			\Z/l^r, &j=0, \\
			0, &j>0.
		\end{cases} 
	\end{equation}
\end{conj}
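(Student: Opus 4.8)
The plan is to realize the map in the conjecture as the push-forward $f_\ast$ along the structure morphism and to prove, separately, that it is an isomorphism in degree $0$ and that $KH_j^{(0)}(X,\Z/l^r)$ vanishes for $j>0$. In degree $0$ the complex $KC^{(0)}_\bullet(X,\Z/l^r)$ terminates in $\bigoplus_{x\in X_0}H^1(F(x),\Z/l^r)$, and since each residue field $F(x)$ is finite we have $H^1(F(x),\Z/l^r)\simeq\Z/l^r$; the sum of corestrictions to $H^1(F,\Z/l^r)\simeq\Z/l^r$, identified with $KH_0^{(0)}(\Spec(F),\Z/l^r)$ via \eqref{eq:Br}, furnishes the candidate map $\bigoplus_{x\in X_0}\Z/l^r\to\Z/l^r$. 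First I would check that this map annihilates the image of $\partial$ so that it factors through $KH_0^{(0)}(X,\Z/l^r)$, and that it is surjective because $X$ acquires a closed point of degree prime to $l$ after a finite base change. The remaining content, injectivity in degree $0$ and vanishing in higher degrees, I would attack by dévissage on $d=\dim(X)$.

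For the base cases, $d=0$ is immediate, while $d=1$ is the heart of the matter: here $X$ is a smooth projective curve over $F$, the degree-$1$ term is $\bigoplus_{x\in X_1}H^2(F(x),\mu_{l^r})\simeq\bigoplus_{x\in X_1}\Br(F(x))[l^r]$ with each $F(x)$ a global field of positive characteristic, and the assertion that the complex is exact with $KH_0^{(0)}\simeq\Z/l^r$ is precisely the Hasse--Brauer--Noether reciprocity for the function field $F(X)$ (the short exact sequence recorded in the introduction). Thus the curve case is class field theory for global fields and demands no input beyond \eqref{eq:Br}.

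For the inductive step I would use the fibration method. Choosing a Lefschetz pencil and blowing up its base locus yields $\widetilde{X}\to\mathbb{P}^1_F$ with smooth generic fibre, and I would compare the Kato complex of $\widetilde{X}$ with those of the fibres through the niveau (Bloch--Ogus--Gabber) spectral sequence, in which $KC^{(0)}_\bullet$ appears as a single line. Over the finite base field one controls the relevant étale cohomology via the Gersten resolution together with the norm residue isomorphism theorem (cf.~\cite{Wei09}), reducing the vanishing of $KH_j^{(0)}(\widetilde{X},\Z/l^r)$ for $j>0$ to the inductive hypothesis applied to the $(d-1)$-dimensional members of the pencil and to their mutual intersections. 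The main obstacle is exactly this inductive step: one must produce hypersurface sections that are simultaneously smooth, geometrically irreducible, and of controllable Kato homology, which over a finite field forces a delicate Bertini/alteration argument (after de Jong) and a precise accounting of the contributions of singular and reducible fibres. It is here, rather than in the curve case, that the full strength of resolution-type results and of the cohomological Hasse principle is required.
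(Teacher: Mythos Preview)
The statement you are attempting to prove is recorded in the paper as a \emph{conjecture} (\autoref{conj:Kato}), not a theorem; the paper offers no proof of it. Immediately after stating it, the paper summarizes the known status: the case $\dim(X)\le 2$ is due to Kato, and for higher dimensions the result is established by Jannsen--Saito only under resolution-of-singularities hypotheses, with the isomorphism unconditional only for $j\le 4$. There is therefore no ``paper's own proof'' to compare your proposal against.

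Your outline correctly identifies the architecture of the known approaches: degree~$0$ is handled by corestriction and a degree-prime-to-$l$ closed point, the curve case is the Hasse--Brauer--Noether sequence for the function field, and one then attempts a d\'evissage via hyperplane sections or fibrations. But the inductive step you sketch is precisely the open part of the problem. Over a finite field, a naive Lefschetz pencil does not give you enough control: the singular fibres are not smooth, so the induction hypothesis does not apply to them directly, and one must instead resolve (or alter) the total space and each degenerate fibre while keeping track of how the Kato complexes change under blow-ups and alterations. This is not a ``delicate Bertini argument'' but the substance of the Jannsen--Saito and later Kerz--Saito programmes, which require either full resolution of singularities in positive characteristic or Gabber's refinement of de Jong's alterations (prime-to-$l$ alterations). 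Your proposal names the obstacle accurately in its last paragraph but does not overcome it; as written it is a description of a strategy, not a proof, and if it could be completed along the lines you indicate it would settle a conjecture the paper explicitly leaves open.
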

The above conjecture is known for the case $\dim(X)\le 2$ (\cite{Kat86}). 
For a higher dimensional scheme $X$, 
Jannsen and Saito (\cite[Theorem~0.3]{JS09}) proved the above conjecture 
under some assumptions on the resolutions of singularities. 
In particular, the isomorphism \eqref{eq:KH} holds for $j\le 4$ unconditionally. 
When $l\neq \ch(F)$, Kerz--Saito \cite{KS12} completely proved \autoref{conj:Kato}.

Now we assume that the base field $F$ is a local field with finite residue field $k$.  
We assume that there exists a model $\sX$ of $X$ defined over the valuation ring $\cO_F$ of $F$ 
and  its special fiber is denoted by $X_k$.

\begin{conj}[{\cite[Conjecture~5.1]{Kat86}, \cite[Conjecture~B]{JS03}}] \label{conj:Kato2}
Let $F$ be a local field, and suppose that 
$\sX$ is a regular scheme proper flat over $\cO_F$. 
    Then, the residue map
    \begin{equation}\label{eq:res}
        \Delta_j: KH_j^{(1)}(X,\Z/l^r)\to KH_j^{(0)}(X_k,\Z/l^r)
    \end{equation}
    is an isomorphism for all $r\ge 1$ and $j\ge 0$.
\end{conj}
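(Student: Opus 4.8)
The plan is to exhibit $\Delta_j$ as the connecting homomorphism of a localization long exact sequence and thereby reduce the conjecture to a single vanishing statement for the total space $\sX$. First I would stratify the points of $\sX$ according to whether they lie on the special fiber $X_k$ (which is closed in $\sX$) or on the generic fiber $X$ (its open complement). Since the boundary maps of the Kato complex are given by residues along specializations, and a point of the special fiber specializes only to points of the special fiber, the terms indexed by $X_k$ form a subcomplex and those indexed by $X$ form the quotient. Tracking weights and degrees, a generic point $x\in X_j$ sits in $\sX$ with $\dim\overline{\{x\}}=j+1$ and carries the discrete valuation of $\cO_F$, so the corresponding summand of the weight-$0$ complex for $\sX$ in degree $j+1$ is $H^{j+2}(F(x),\mu_{l^r}^{\otimes(j+1)})$, which is exactly the degree-$j$ term of $KC^{(1)}_\bullet(X,\Z/l^r)$; the special-fiber summands reproduce $KC^{(0)}_\bullet(X_k,\Z/l^r)$ in the same degrees. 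This yields a short exact sequence of complexes
\[
0 \to KC^{(0)}_\bullet(X_k,\Z/l^r) \to KC^{(0)}_\bullet(\sX,\Z/l^r) \to KC^{(1)}_\bullet(X,\Z/l^r)[1] \to 0,
\]
where the shift $[1]$ records that generic points of $X$ gain one in $\sX$-dimension. I would check these identifications termwise, using the regularity of $\sX$ to ensure the Bloch--Ogus/Kato residue maps are well defined and assemble into a map of complexes.

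Passing to homology, this short exact sequence produces a long exact sequence
\[
\cdots \to KH_{j+1}^{(0)}(\sX,\Z/l^r) \to KH_{j}^{(1)}(X,\Z/l^r) \xrightarrow{\Delta_j} KH_{j}^{(0)}(X_k,\Z/l^r) \to KH_{j}^{(0)}(\sX,\Z/l^r) \to \cdots
\]
in which the connecting homomorphism coincides (up to sign) with the residue map of \eqref{eq:res}. Consequently $\Delta_j$ is an isomorphism for all $j\ge 0$ if and only if $KH_j^{(0)}(\sX,\Z/l^r)=0$ for all $j$. Thus the conjecture is equivalent to the vanishing of the Kato homology of the regular arithmetic scheme $\sX$, which is precisely the local-field incarnation of Kato's vanishing conjecture over $\cO_F$ (the counterpart of \autoref{conj:Kato} for $\sX$; cf.~\cite[Conj.~B]{JS03}).

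To establish this vanishing I would invoke the known cases of the arithmetic Kato conjecture: for $\dim(X)\le 2$ (so $\dim(\sX)\le 3$) it is due to Kato, in the range $j\le 4$ it holds unconditionally, and in all degrees it follows under resolution of singularities, by the methods of Jannsen--Saito. I expect this last input to be the main obstacle: the vanishing $KH_j^{(0)}(\sX,\Z/l^r)=0$ in every degree is itself a deep open problem, so a fully unconditional proof of \autoref{conj:Kato2} for arbitrary $\dim(X)$ lies beyond current techniques. The realistic outcome of this approach is therefore a clean reduction of \autoref{conj:Kato2} to one vanishing statement for $\sX$, giving the isomorphism unconditionally for $j\le 4$ and in all degrees conditional on resolution of singularities.
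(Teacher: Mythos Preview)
The statement you are attempting to prove is labeled \emph{Conjecture} in the paper, and the paper offers no proof of it: it is quoted from \cite{Kat86} and \cite{JS03} as one of Kato's open conjectures, alongside \autoref{conj:Kato} and \autoref{conj:Kato3}. So there is no ``paper's own proof'' to compare against; your proposal is not competing with anything the authors claim to have established.

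That said, your reduction is the standard and correct one. The localization short exact sequence of Kato complexes you write down is exactly how the conjecture is reformulated in the literature (this is essentially the content of \cite[Conj.~B]{JS03}), and you correctly identify that the equivalence $\Delta_j$ bijective $\Leftrightarrow$ $KH_j^{(0)}(\sX,\Z/l^r)=0$ leaves the hard work untouched. Your honest assessment that an unconditional proof in all degrees is out of reach is accurate; what you have written is a reduction, not a proof, and you say as much.

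It may be worth noting what the paper \emph{does} prove in this direction: the corollary following \autoref{thm:KH} establishes the isomorphism $\Delta_0$ (the $j=0$ case only) for an $N$-dimensional local field $F$, under the additional hypothesis that the special fiber $X_k$ is smooth. The method there is entirely different from yours: rather than a localization sequence on $\sX$, the authors use their main theorem (\autoref{thm:main}) to identify both $KH_0^{(N)}(X,\Z/l^r)$ and $KH_0^{(N-1)}(X_k,\Z/l^r)$ with $\Z/l^r$ via the structure maps $f_\ast^{KH}$ and $(f_k)_\ast^{KH}$, and then appeal to Kato's higher local class field theory for the base. Your approach would in principle give all $j$ at once (conditionally), while theirs gives only $j=0$ but unconditionally, by exploiting the $\fB_s$-structure of the base field rather than any vanishing for $\sX$.
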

This conjecture has been proved by Kerz--Saito \cite{KS12} when $l$ is invertible on $\sX$.

Next, we suppose that the field $F$ is a global field. 
For any place $v$ of $F$, 
we denote by $F_v$
the completion of $F$ with respect to the place $v$. 

\begin{conj}[{\cite[Conjecture~0.4]{Kat86}, \cite[Conjecture~A]{JS03}}]
\label{conj:Kato3}
Suppose that $F$ is a global field of $\ch(F) = p\ge 0$. 
Let $X$ be a smooth, connected, projective variety over $F$.
Then, for $j>0$ and a prime $l\neq p$, there is an isomorphism 
\[
KH_j^{(1)}(X,\Z/l^r) \xrightarrow{\simeq} \bigoplus_v KH_j^{(1)}(X_{v},\Z/l^r) 
\]	
and, for $j=0$, the short exact sequence below exists:
\[
0\to KH_0^{(1)}(X,\Z/l^r) \to \bigoplus_v KH_0^{(1)}(X_v,\Z/l^r) \to \Z/l^r \to 0, 
\]
where $v$ runs over the set of the places of $F$ and $X_v := X\otimes_F F_v$ for each place $v$. 
\end{conj}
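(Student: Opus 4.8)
The plan is to derive the $j=0$ short exact sequence from the degree-zero isomorphism for Kato homology proved above together with the Hasse--Brauer--Noether theorem, and to single out the $j>0$ isomorphism as the part lying outside the reach of the present methods.

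First I would specialize the isomorphism $f_\ast\colon KH_0^{(s-1)}(X,\Z/l^r)\xrightarrow{\simeq} KH_0^{(s-1)}(\Spec(F),\Z/l^r)$ (\autoref{thm:KH}) to $s=2$, which is available whenever $F$ is a $\fB_2(l)$-field. This covers a function field of one variable over a finite field for every $l\neq p$, and a number field that is totally imaginary or is taken with an odd prime $l$ (\autoref{lem:global_B2}); the residual case of a real place with $l=2$ would need the separate analysis of $\R$ carried out in the previous section. In these cases the isomorphism, combined with \eqref{eq:Br}, gives
\[
KH_0^{(1)}(X,\Z/l^r)\xrightarrow{\simeq} KH_0^{(1)}(\Spec(F),\Z/l^r)\simeq \Br(F)[l^r].
\]
Each completion $F_v$ is a local field, hence $\fB_2$ by \autoref{prop:Kato}, so applying the same isomorphism over $F_v$ yields $KH_0^{(1)}(X_v,\Z/l^r)\simeq \Br(F_v)[l^r]$, reducing the $j=0$ assertion to a statement about Brauer groups.

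Next I would invoke the fundamental exact sequence of global class field theory,
\[
0\to \Br(F)\to \bigoplus_v \Br(F_v)\to \Q/\Z\to 0,
\]
and pass to $l$-primary parts (an exact operation on torsion groups). The group $\Br(F)\{l\}$ is then $l$-divisible: it is the kernel of the split surjection $\bigoplus_v \Br(F_v)\{l\}\to (\Q/\Z)\{l\}$ and hence a direct summand of the divisible group $\bigoplus_v \Br(F_v)\{l\}$. Therefore $\Br(F)\{l\}/l^r=0$, and the six-term exact sequence for multiplication by $l^r$ collapses to
\[
0\to \Br(F)[l^r]\to \bigoplus_v \Br(F_v)[l^r]\to (\Q/\Z)[l^r]\to 0.
\]
Since $(\Q/\Z)[l^r]\simeq \Z/l^r$, combining this with the identifications of the previous step produces exactly the claimed short exact sequence for $KH_0^{(1)}$.

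The main obstacle is the $j>0$ assertion $KH_j^{(1)}(X,\Z/l^r)\simeq \bigoplus_v KH_j^{(1)}(X_v,\Z/l^r)$. The isomorphism I rely on controls only degree-zero Kato homology, so the Somekawa-product and higher-Chow-group techniques underlying \autoref{thm:main} supply no information in positive homological degree. A proof of the higher-degree local--global principle would instead require the Kato conjecture over finite residue fields (\autoref{conj:Kato}) together with a d\'evissage along a regular model---the resolution-of-singularities methods of Jannsen--Saito. I would therefore present the $j=0$ sequence as the consequence following unconditionally from the main theorem, and leave the $j>0$ statement genuinely conjectural.
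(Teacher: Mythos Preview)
The statement you were asked to prove is labeled as a \emph{Conjecture} in the paper and is not proved there in full generality. Immediately after stating it, the paper records only that the $\dim(X)=1$ case is due to Kato and that the general statement follows conditionally on resolution of singularities from work of Jannsen. So there is no ``paper's own proof'' of \autoref{conj:Kato3} as such.

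What the paper does prove is the $j=0$ short exact sequence as a separate corollary, under the additional hypotheses that $X$ is smooth projective and geometrically irreducible over $F$, and (when $\ch(F)=0$) that $F$ is totally imaginary or $l\neq 2$. Your proposal treats exactly this case, and your argument is the same as the paper's: apply \autoref{thm:KH} with $s=2$ over $F$ and over each completion $F_v$ to identify $KH_0^{(1)}(X,\Z/l^r)\simeq \Br(F)[l^r]$ and $KH_0^{(1)}(X_v,\Z/l^r)\simeq \Br(F_v)[l^r]$, then invoke the Hasse--Brauer--Noether exact sequence. The only cosmetic difference is that the paper quotes the $l^r$-torsion form of Hasse--Brauer--Noether directly, whereas you derive it from the full $\Q/\Z$-valued sequence via the divisibility of $\Br(F)\{l\}$; and the paper dispatches archimedean places by observing that $H_{l^r}^2(F_v(x))=0$ for $l$ odd and $\Br(\C)=0$, $\Br(\R)\simeq\Z/2$. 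Your suggestion that the $l=2$ real-place case might be handled via the $\R$-lemma from the previous section is speculative: the paper simply excludes that case from its corollary and does not attempt it.

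Your diagnosis of the $j>0$ assertion is correct and matches the paper's stance: the methods here give no control on positive-degree Kato homology, and the paper leaves that part of \autoref{conj:Kato3} as an open conjecture, pointing to the conditional results of Jannsen and Jannsen--Saito.
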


The conjecture above holds when $\dim(X) = 1$ (\cite{Kat86}). 
Moreover, assuming resolution of singularities 
\autoref{conj:Kato3} follows from \cite[Theorem 0.9]{Jan16} (for recent progress, see \cite{Sai10}).

\subsection*{Relation to higher Chow groups}
Let $X$ be a smooth projective and geometrically irreducible scheme of $d = \dim(X)$ over a field $F$. 
By \cite[Section~2]{JS09}, for a prime $l\neq \ch(F)$, 
the structure morphism $f\colon X\to \Spec(F)$ induces a map 
of complexes $KC_{\bullet}^{(n)}(X,\Z/l^r) \to KC_{\bullet}^{(n)}(\Spec(F),\Z/l^r)$ 
and hence a complex 
\begin{equation}\label{eq:RecCpx}
\bigoplus_{x\in X_1} H_{l^r}^{n+2}(F(x)) \xrightarrow{\partial} \bigoplus_{x\in X_0} H_{l^r}^{n+1}(F(x)) \xrightarrow{\Cor} H_{l^r}^{n+1}(F).
\end{equation}
Here, the map $\Cor$ is defined by the corestrictions $\Cor_{F(x)/F}\colon H_{l^r}^{n+1}(F(x)) \to H_{l^r}^{n+1}(F)$ for $x\in X_0$ (cf.~\cite[Appendix A]{JSS14}). 
Taking the homology groups, there is a map 
\begin{equation}\label{eq:fast}
f_{\ast}^{KH}\colon KH_0^{(n)}(X,\Z/l^r) \to KH_0^{(n)}(\Spec(F),\Z/l^r) = H_{l^r}^{n+1}(F). 
\end{equation}

Suppose that $\cd_l(F) =s$ is finite. 
By definition, we have $KH^{(n)}_0(X,\Z/l^r) = 0$ for $n \ge s$. 
%In the following, we consider the case $s \le s_0-1$.

\begin{lem}\label{lem:CHKH}
    If we assume $\mu_l \subset F$, 
    then there is a commutative diagram
    \begin{equation}\label{diag:CHKH}
    \vcenter{
    \xymatrix{
    CH^{d+n+1}(X,n+1)/l \ar[r]^-{f_{\ast}^{CH}}\ar[d]^{\simeq} &  K^M_{n+1}(F)/l\ar[d]^{\simeq} \\
    KH_0^{(n)}(X,\Z/l) \ar[r]^-{f_{\ast}^{KH}} & H_l^{n+1}(F), 
    }}
\end{equation}
for any $n\ge 0$, 
where the vertical maps are bijective.
\end{lem}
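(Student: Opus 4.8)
The plan is to identify both vertical maps as assembled from the norm residue isomorphism together with the untwisting provided by the hypothesis $\mu_l \subset F$, and then to verify that the resulting square commutes because the norm on Milnor $K$-theory and the corestriction in Galois cohomology are compatible under the norm residue map.

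First I would realize the two left-hand corners as cokernels of residue maps. By \eqref{eq:MK-Chow} (applied with $n$ replaced by $n+1$) together with the right exactness of $-\otimes\Z/l$, one has
\[
CH^{d+n+1}(X,n+1)/l \simeq \Coker\Bigl(\partial\colon \bigoplus_{x\in X_1} K_{n+2}^M(F(x))/l \to \bigoplus_{x\in X_0} K_{n+1}^M(F(x))/l\Bigr),
\]
while by the definition \eqref{def:KH} of Kato homology,
\[
KH_0^{(n)}(X,\Z/l) = \Coker\Bigl(\partial\colon \bigoplus_{x\in X_1} H^{n+2}(F(x),\mu_l^{\otimes(n+1)}) \to \bigoplus_{x\in X_0} H^{n+1}(F(x),\mu_l^{\otimes n})\Bigr).
\]

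Next I would build, at each residue field $E = F(x)$, the comparison isomorphism between corresponding terms. The norm residue isomorphism theorem (\cite{Wei09}) gives $K_m^M(E)/l \simeq H^m(E,\mu_l^{\otimes m})$, and since $\mu_l \subset F \subset E$, a single fixed primitive $l$-th root of unity trivializes $\mu_l \simeq \Z/l$ over every such $E$ simultaneously, yielding compatible isomorphisms $\mu_l^{\otimes m} \simeq \mu_l^{\otimes(m-1)}$. Combining these gives $K_{n+1}^M(E)/l \simeq H^{n+1}(E,\mu_l^{\otimes n})$ and $K_{n+2}^M(E)/l \simeq H^{n+2}(E,\mu_l^{\otimes(n+1)})$, matching the two complexes degreewise. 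The crucial point is that the boundary maps correspond: Kato's complex is constructed so that, under the norm residue isomorphism, the tame symbol on Milnor $K$-groups agrees with the Galois-cohomological residue (this is the compatibility underlying \cite[Proposition~1.7]{Kat86}, see also \cite[Section~2]{JS09}). Hence the comparison maps assemble into an isomorphism of complexes and induce the left vertical isomorphism on $H_0$. The right vertical map is the same recipe over $F$ itself: Bloch--Kato gives $K_{n+1}^M(F)/l \simeq H^{n+1}(F,\mu_l^{\otimes(n+1)})$, and the untwist gives $\simeq H^{n+1}(F,\mu_l^{\otimes n}) = H_l^{n+1}(F)$.

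Finally I would check commutativity. On each summand the top map $f_\ast^{CH}$ is the norm $N_{F(x)/F}\colon K_{n+1}^M(F(x)) \to K_{n+1}^M(F)$ (cf.~\eqref{eq:fCH}), while the bottom map $f_\ast^{KH}$ is the corestriction $\Cor_{F(x)/F}$ (cf.~\eqref{eq:fast}, \eqref{eq:RecCpx}). Because the norm residue homomorphism intertwines the Milnor norm with Galois corestriction, and the untwisting is compatible with corestriction as $\mu_l \subset F$ is defined over the base, the square commutes term by term, hence on the cokernels. The main obstacle I anticipate is precisely this compatibility of the residue and norm maps with the norm residue isomorphism, together with the bookkeeping of Tate twists: one must ensure that the global untwisting by $\mu_l \subset F$ is consistent with the residue maps, which lower cohomological degree and twist simultaneously. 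Once that identification of Kato's Galois-cohomological complex with the mod-$l$ reduction of the Milnor $K$-theory complex is in place, the remainder is formal.
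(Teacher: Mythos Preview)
Your proposal is correct and follows essentially the same route as the paper: both use the norm residue isomorphism together with the untwisting $\mu_l\simeq\Z/l$ (available since $\mu_l\subset F$) to identify the mod-$l$ Milnor $K$-theory complex \eqref{eq:MKC2} termwise with the Kato complex $KC_\bullet^{(n)}(X,\Z/l)$, and then pass to cokernels. The paper is terser and does not spell out the compatibility of the Milnor norm with Galois corestriction that you use for the commutativity of \eqref{diag:CHKH}, but this is implicit in its claim that the induced map ``makes the diagram \eqref{diag:CHKH} commutative''.
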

\begin{proof}
Let $\xi\in X_1$. 
First we briefly recall the construction 
of the boundary map 
\[
\partial = \partial^{(i)}\colon
H^{n+2}(F(\xi),\Z/l(i+1))
\to \bigoplus_{x\in X_0}H^{n+1}(F(x),\Z/l(i))
\]
for $i=n,n+1$. 
Let $C$ be the normalization
of $\overline{\set{\xi}}$. For a closed point $x\in X_0$ with
$x\in \overline{\{\xi\}}$, let $v$ be a closed point of
$C$ lying over $x$. 
We denote by $K_v := F(\xi)_v^h$ the henselization of $F(\xi)$
at the discrete valuation defined by $v$, and by $F(v)$ its residue
field. Then $F(v)$ is a finite extension of $F(x)$. 
Let $K_v^{sh}$ be the
strict henselization. 
Then 
$\partial_v^{(i)}$ is the composite
\begin{align*}	
\partial_v^{(i)}\colon H^{n+2}(K_v,\Z/l(i+1))
&\xrightarrow{\epsilon^{(i)}}
H^{n+1}\!\left(
F(v),
H^1(K_v^{sh},\Z/l(1))\otimes \Z/l(i)
\right) \\
&\xrightarrow{(\rho^{(i)})_\ast}
H^{n+1}(F(v),\Z/l(i)),
\end{align*}
where $\epsilon^{(i)}$ is the edge map of the Hochschild--Serre spectral
sequence and $\rho^{(i)}$ is induced by the normalized valuation
\[
H^1(K_v^{sh},\Z/l(1))
\simeq
(K_v^{sh})^\times/l
\xrightarrow{v} 
\Z/l .
\]
For $i=n,n+1$, define 
\[
\partial^{(i)} 
=\left(
\sum_{v\mid x}
\Cor_{F(v)/F(x)}
\circ
\partial_v^{(i)}
\circ
\res_{K_v/F(\xi)} \right)_{x\in X_0},
\]
where
$v$ 
run through the closed points of
$C$ lying over $x$.

Fix a primitive $l$-th root of unity $\zeta$ in $F$. 
For any field extension $E/F$, 
the norm residue isomorphism theorem (the Bloch--Kato conjecture, \cite[Theorem~6.16]{Vo11} and \cite{Wei09}) gives 
\[
K_{i+1}^M(E)/l \xrightarrow[\simeq]{h^{i+1}_E} 
H^{i+1}(E,\Z/l(i+1)) \xleftarrow[\simeq]{\cup \zeta} 
H^{i+1}(E,\Z/l(i))= H^{i+1}_l(E)
\]
for $i = n, n+1$. 
Take $\xi \in X_1$ and 
consider the following diagram: 
\begin{equation}
\label{diag:cup_zeta}
\vcenter{
\xymatrix@R=7mm{
K_{n+2}^M(F(\xi))/l \ar[r]^{\partial}\ar[d]_{h_{F(\xi)}^{n+2}}^{\simeq} & \displaystyle \bigoplus_{x\in X_0} K_{n+1}^M(F(x))/l\ar[d]_{\simeq}^{\oplus h_{F(x)}^{n+1}} \\
H^{n+2}(F(\xi),\Z/l(n+2)) \ar[r]^-{\partial^{(n+1)}} & \displaystyle \bigoplus_{x\in X_0}H^{n+1}(F(x),\Z/l(n+1))  \\
H_l^{n+2}(F(\xi)) \ar[r]^-{\partial^{(n)}}\ar[u]^{\cup \zeta}_\simeq & \displaystyle \bigoplus_{x\in X_0}H_l^{n+1}(F(x)) \ar[u]_{\cup \zeta}^\simeq.
}}
\end{equation}
The norm residue isomorphism is compatible with norm maps on Milnor $K$-groups and corestriction maps in Galois cohomology.
Hence, the upper square is commutative by \cite[Lemma~1.4 (1)]{Kat80}. 
Furthermore, 
the edge map of the Hochschild--Serre spectral sequence 
is compatible with cup products, 
and the valuation map defining $\rho^{(i)}$ is compatible with tensoring by $\zeta$.
Taking the sum over $v\mid x$ and using the compatibility of
corestriction with cup products, we obtain 
that the lower square in \eqref{diag:cup_zeta}  is commutative. 

The isomorphism \eqref{eq:MK-Chow} gives rise to a commutative diagram below:
\[
\xymatrix{
\displaystyle\bigoplus_{x\in X_1}K_{n+2}^M(F(x))/l \ar[r]^{\partial}\ar[d]^{\simeq} & \displaystyle\bigoplus_{x\in X_0}K_{n+1}^M(F(x))/l \ar[r]\ar[d]^{\simeq} & CH^{d+n+1}(X,n+1)/l\ar[r]\ar@{-->}[d]^{\simeq} & 0\\
\displaystyle\bigoplus_{x\in X_1}H_l^{n+2}(F(x)) \ar[r]^{\partial} & \displaystyle\bigoplus_{x\in X_0}H_l^{n+1}(F(x)) \ar[r] & KH_0^{(n)}(X,\Z/l)\ar[r] & 0
}
\]
The right vertical map is bijective and makes the diagram \eqref{diag:CHKH} commutative. 
\end{proof}

\begin{lem}\label{lem:r}
    Let $s\ge 1$, and suppose that $\cd_l(F) \le s$ for a prime $l$. 
    If 
    \[
    f_{\ast}^{KH}\colon KH_0^{(s-1)}(X,\Z/l) \to KH_0^{(s-1)}(\Spec(F),\Z/l) = H_l^{s}(F)
    \]
    is injective (resp.~surjective), 
    then, for every $r\ge 1$, so is 
    \[
    f_{\ast}^{KH}\colon KH_0^{(s-1)}(X,\Z/l^r) \to H_{l^r}^{s}(F).
    \]
\end{lem}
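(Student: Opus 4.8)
The plan is to induct on $r$, the case $r=1$ being the hypothesis, and to pass from $r-1$ to $r$ by comparing source and target through the long exact sequences of Kato homology attached to a coefficient short exact sequence. For a fixed homological degree $p$, the summand of $KC_\bullet^{(s-1)}(X,\Z/l^a)$ indexed by $x\in X_p$ is $H^{s+p}(F(x),\mu_{l^a}^{\otimes(s-1+p)})$, and the $G_{F(x)}$-module short exact sequence $0\to\mu_{l^{r-1}}^{\otimes(s-1+p)}\to\mu_{l^r}^{\otimes(s-1+p)}\to\mu_l^{\otimes(s-1+p)}\to 0$ induces a morphism of the three complexes $KC_\bullet^{(s-1)}(X,\Z/l^{r-1})\to KC_\bullet^{(s-1)}(X,\Z/l^r)\to KC_\bullet^{(s-1)}(X,\Z/l)$ compatible with the Kato differentials. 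First I would show that this is in fact a \emph{termwise short exact} sequence of complexes.

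The right-exactness of each term is immediate: for $x\in X_p$ the residue field $F(x)$ is finitely generated of transcendence degree $p$ over $F$, so $\cd_l(F(x))\le\cd_l(F)+p\le s+p$, whence $H^{s+p+1}(F(x),-)=0$ and the long exact cohomology sequence gives surjectivity onto the $\mu_l$-term. The left-exactness is the delicate point and is where the norm residue isomorphism theorem enters: the kernel of $H^{s+p}(F(x),\mu_{l^{r-1}}^{\otimes(s-1+p)})\to H^{s+p}(F(x),\mu_{l^r}^{\otimes(s-1+p)})$ is the image of the connecting map out of $H^{s+p-1}(F(x),\mu_l^{\otimes(s-1+p)})$, and this connecting map vanishes exactly when the reduction $H^{s+p-1}(F(x),\mu_{l^r}^{\otimes(s-1+p)})\to H^{s+p-1}(F(x),\mu_l^{\otimes(s-1+p)})$ is surjective. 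Here the cohomological degree $s+p-1$ equals the weight $s-1+p$, so by the norm residue isomorphism theorem (\cite{Wei09}) these groups are $K_{s+p-1}^M(F(x))/l^r$ and $K_{s+p-1}^M(F(x))/l$ and the map is the canonical reduction, which is manifestly surjective. Thus each term is short exact, the sequence of complexes is short exact, and it yields a long exact sequence of Kato homology groups.

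Specializing to $\Spec(F)$, where the complex is concentrated in degree $0$, gives a short exact sequence
\[
0\to H_{l^{r-1}}^{s}(F)\to H_{l^r}^{s}(F)\to H_{l}^{s}(F)\to 0,
\]
while over $X$ one obtains the exact sequence
\[
KH_1^{(s-1)}(X,\Z/l)\to KH_0^{(s-1)}(X,\Z/l^{r-1})\to KH_0^{(s-1)}(X,\Z/l^r)\to KH_0^{(s-1)}(X,\Z/l)\to 0.
\]
Since $f_\ast^{KH}$ is induced by corestrictions it is functorial in the coefficients, so these assemble into a commutative ladder with vertical maps $f_\ast^{KH}$ for $\Z/l^{r-1}$, $\Z/l^r$ and $\Z/l$, and with leftmost bottom entry $KH_1^{(s-1)}(\Spec(F),\Z/l)=0$. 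I would then finish with the four lemma: for the surjective case, right-exactness of both rows together with surjectivity of the outer maps (the left one by induction, the right one by hypothesis) forces the middle map to be surjective; for the injective case, the monomorphism form of the four lemma applies, since the vertical map out of $KH_1^{(s-1)}(X,\Z/l)$ targets $KH_1^{(s-1)}(\Spec(F),\Z/l)=0$ and is trivially surjective, while the two outer vertical maps are injective by induction and by hypothesis.

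The main obstacle is precisely the left-exactness of the coefficient sequence of Kato complexes: a priori the coefficient sequence only gives right-exactness termwise, and with a merely right-exact ladder the injectivity direction of the lemma would break down. What rescues it is that throughout $KC_\bullet^{(s-1)}$ the cohomological degree exceeds the weight by exactly one, so the cohomology one degree below each term is of weight equal to degree; the norm residue isomorphism theorem then realizes its reduction as the surjection $K_m^M/l^r\twoheadrightarrow K_m^M/l$, annihilating the relevant Bocksteins. Checking this vanishing, and confirming that the Kato differentials commute with the coefficient maps so that one genuinely has a short exact sequence of complexes, is the step that requires care; the remainder is a formal diagram chase.
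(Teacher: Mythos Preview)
Your proof is correct and follows essentially the same approach as the paper: both use the coefficient exact sequence $0\to\mu_{l^{r-1}}\to\mu_{l^r}\to\mu_l\to 0$, invoke the norm residue isomorphism theorem to obtain termwise left exactness, use the bound on $\cd_l$ for termwise right exactness, and finish with a diagram chase plus induction on $r$. The only cosmetic difference is that you work with the full Kato complex and pass through the long exact sequence in homology (so that $KH_1^{(s-1)}(X,\Z/l)$ appears and the four lemma applies), whereas the paper truncates to degrees $0$ and $1$ and takes cokernels directly; your version is slightly more explicit about the right-exactness in degree $1$, which the paper uses tacitly.
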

\begin{proof}
For any $n\ge 1$ and a field extension $K/F$, 
there is a long exact sequence 
\begin{align*}
\cdots &\to H^{n-1}(K,\mu_{l^{r+1}}^{\otimes (n-1)}) \to H^{n-1}(K,\mu_l^{\otimes (n-1)}) \\
&\xrightarrow{\delta} H^{n}_{l^r}(K) \to H^{n}_{l^{r+1}}(K) \to H^{n}_{l}(K) \\
&\xrightarrow{\delta} H^{n+1}(K,\mu_{l^{r}}^{\otimes (n-1)}) \to \cdots 
\end{align*}
along with the short exact sequence $0\to \mu_{l^r}^{\otimes (n-1)} \to \mu_{l^{r+1}}^{\otimes (n-1)}\to \mu_l^{\otimes (n-1)}\to 0$.
By the norm residue isomorphism, 
the vertical maps in the diagram 
\[
\xymatrix{
K_{n-1}^{M}(K)/l^{r+1}\ar[r] \ar[d]^{\simeq} & K_{n-1}^M(K)/l\ar[d]^{\simeq}\\
H^{n-1}(K,\mu_{l^{r+1}}^{\otimes (n-1)}) \ar[r] & H^{n-1}(K,\mu_l^{\otimes (n-1)}) 
}
\]
are isomorphisms. Since the top horizontal map is surjective, so is the bottom one. 
Hence, we have an exact sequence 
\begin{equation}
	\label{seq:GC}
 0 \to H^{n}_{l^r}(K) \to H^{n}_{l^{r+1}}(K) \to H^{n}_{l}(K) \xrightarrow{\delta} H^{n+1}(K,\mu_{l^{r}}^{\otimes (n-1)}).
\end{equation}
Applying $K = F(x)$ for $x\in X_1$ and $n = s+1$ to the exact sequence \eqref{seq:GC}, 
and using $\cd_l(F(x)) \le \cd_l(F)+1 \le s+1$, 
there is a short exact sequence 
\[
0 \to H_{l^r}^{s+1}(F(x)) \to H_{l^{r+1}}^{s+1}(F(x)) \to H_l^{s+1}(F(x)) \to 0. 
\]
Next, we apply $K = F(x)$ for $x\in X_0$ and $n = s$ to the exact sequence \eqref{seq:GC}. 
By $\cd_l(F) \le s$, we have 
$H^{s+1}(F(x),\mu_{l^{r}}^{\otimes (s-1)}) = 0$ for $x\in X_0$ 
and there is a short exact sequence 
\[
0 \to H_{l^r}^{s}(F(x)) \to H_{l^{r+1}}^{s}(F(x)) \to H_l^{s}(F(x)) \to 0. 
\]
We obtain a commutative diagram with exact rows:
\begin{equation}
\label{diag:KH}
\vcenter{	
\xymatrix@C=5mm{
0 \ar[r] & \displaystyle\bigoplus_{x\in X_1}H^{s+1}_{l^{r}}(F(x)) \ar[r]\ar[d]^{\partial} & \displaystyle\bigoplus_{x\in X_1}H^{s+1}_{l^{r+1}}(F(x)) \ar[r]\ar[d]^{\partial} &
\displaystyle\bigoplus_{x\in X_1}H^{s+1}_{l}(F(x))\ar[d]^{\partial} \ar[r] & 0\\
0 \ar[r] & \displaystyle\bigoplus_{x\in X_0}H^{s}_{l^{r}}(F(x)) \ar[r] & \displaystyle\bigoplus_{x\in X_0}H^{s}_{l^{r+1}}(F(x)) \ar[r] &
\displaystyle\bigoplus_{x\in X_0}H^{s}_{l}(F(x))\ar[r]  & 0.
}}
\end{equation}
By considering the cokernels of the vertical maps, 
the above diagram induces 
\begin{equation}
	\label{diag:KHH}
\vcenter{	
\xymatrix@C=5mm{
 & KH_0^{(s-1)}(X,\Z/l^r)\ar[r]\ar[d]^{f_{\ast}^{KH}} & KH_0^{(s-1)}(X,\Z/l^{r+1})  \ar[r]\ar[d]^{f_{\ast}^{KH}} &
KH_0^{(s-1)}(X,\Z/l)\ar[d]^{f_{\ast}^{KH}}\ar[r] & 0\,\\
0 \ar[r] & H^{s}_{l^{r}}(F) \ar[r] & H^{s}_{l^{r+1}}(F) \ar[r] &
H^{s}_{l}(F) \ar[r] & 0.
}}
\end{equation}
The assertion follows from the snake lemma and the induction on $r$.
\end{proof}

\begin{lem}
\label{lem:F'}
	Let $r\ge 1$, and 
	let $F'/F$ be a finite separable extension of degree prime to $l$. 
    If we assume the push-forward map 
    \[
        f_{\ast}^{KH}\colon KH_0^{(n)}(X_{F'},\Z/l^r) \to H_{l^r}^{n+1}(F')
    \]
    is injective (resp.~surjective) for some $n\ge 0$, 
    then so is 
    \[
        f_{\ast}^{KH}\colon KH_0^{(n)}(X,\Z/l^r) \to H_{l^r}^{n+1}(F).
    \]
\end{lem}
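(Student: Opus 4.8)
The plan is to run the standard transfer (restriction--corestriction) argument, exploiting that $[F':F]$ is prime to $l$ while both $KH_0^{(n)}(X,\Z/l^r)$ and $H^{n+1}_{l^r}(F)$ are $\Z/l^r$-modules, so that multiplication by $[F':F]$ is an automorphism of each of them.

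First I would record the relevant functoriality of the Kato complexes under the finite extension $F'/F$. The base-change morphism $\pi\colon X_{F'}\to X$ is finite, and the residue fields of the points of $X_{F'}$ lying over a given point of $X$ are finite extensions of the corresponding residue field on $X$. Restriction and corestriction on Galois cohomology therefore assemble, point by point, into morphisms of complexes between $KC^{(n)}_\bullet(X,\Z/l^r)$ and $KC^{(n)}_\bullet(X_{F'},\Z/l^r)$ compatible with the residue differentials $\partial$ (this is part of the functoriality developed in \cite[Appendix A]{JSS14}, cf.\ \cite[Section~2]{JS09}). Passing to $H_0$ yields maps
\[
\res\colon KH_0^{(n)}(X,\Z/l^r)\to KH_0^{(n)}(X_{F'},\Z/l^r),\qquad
\Cor\colon KH_0^{(n)}(X_{F'},\Z/l^r)\to KH_0^{(n)}(X,\Z/l^r),
\]
and likewise $\res_{F'/F}$ and $\Cor_{F'/F}$ on $H^{n+1}_{l^r}(-)$. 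The same formalism gives the compatibilities
\[
f_{\ast}^{KH}\circ\res = \res_{F'/F}\circ f_{\ast}^{KH},\qquad
f_{\ast}^{KH}\circ\Cor = \Cor_{F'/F}\circ f_{\ast}^{KH},
\]
where $f_{\ast}^{KH}$ denotes the map for $X_{F'}/F'$ on the left and for $X/F$ on the right; these reduce to the transitivity of corestriction together with the base-change formula for the residue maps. Finally $\Cor\circ\res = [F':F]$ and $\Cor_{F'/F}\circ\res_{F'/F}=[F':F]$ as endomorphisms of the respective groups.

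With these maps in hand, the argument is a diagram chase. Suppose $f_{\ast}^{KH}$ is injective over $F'$, and let $\alpha\in KH_0^{(n)}(X,\Z/l^r)$ satisfy $f_{\ast}^{KH}(\alpha)=0$. Then $f_{\ast}^{KH}(\res(\alpha))=\res_{F'/F}(f_{\ast}^{KH}(\alpha))=0$, so $\res(\alpha)=0$ by hypothesis, whence $[F':F]\,\alpha=\Cor(\res(\alpha))=0$; since $[F':F]$ is invertible modulo $l^r$, we conclude $\alpha=0$. Suppose instead $f_{\ast}^{KH}$ is surjective over $F'$, and let $\beta\in H^{n+1}_{l^r}(F)$. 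Choose $\gamma\in KH_0^{(n)}(X_{F'},\Z/l^r)$ with $f_{\ast}^{KH}(\gamma)=\res_{F'/F}(\beta)$; then $f_{\ast}^{KH}(\Cor(\gamma))=\Cor_{F'/F}(\res_{F'/F}(\beta))=[F':F]\,\beta$, and applying the inverse of multiplication by $[F':F]$ on the $\Z/l^r$-module $H^{n+1}_{l^r}(F)$ produces a preimage of $\beta$.

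The one genuinely nontrivial input is the functoriality summarized in the first step: one must know that the pointwise restriction and corestriction maps on the groups $H^{m+1}_{l^r}(F(x))$ commute with the Bloch--Ogus residue differentials and are compatible with the corestriction maps defining $f_{\ast}^{KH}$. Everything else is the purely formal fact that on $l^r$-torsion groups multiplication by an integer prime to $l$ is an automorphism. I would therefore devote the bulk of the write-up to citing, and if necessary spelling out, these compatibilities from \cite{JSS14} and \cite{JS09}, after which the injective and surjective cases follow immediately.
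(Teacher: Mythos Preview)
Your proposal is correct and follows essentially the same restriction--corestriction transfer argument as the paper: both set up the commuting squares with $\res$ and $\Cor$, use $\Cor\circ\res=[F':F]$, and exploit that $[F':F]$ acts invertibly on $l^r$-torsion groups. One cosmetic point: in your surjectivity step you should invert $[F':F]$ on the source $KH_0^{(n)}(X,\Z/l^r)$ (taking $[F':F]^{-1}\Cor(\gamma)$ as the preimage) rather than on the target, but since $f_\ast^{KH}$ is $\Z$-linear this amounts to the same thing.
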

\begin{proof}
	Consider the following commutative diagram: 
\[
\xymatrix{
KH_0^{(n)}(X_{F'},\Z/l^r) \ar[r]^-{f_{\ast}^{KH}} & H_{l^r}^{n+1}(F') \\
KH_0^{(n)}(X,\Z/l^r) \ar[u]^{\res_{F'/F}}\ar[r]^-{f_{\ast}^{KH}} & H_{l^r}^{n+1}(F)\ar[u]_{\res_{F'/F}},
}\quad 
\xymatrix{
KH_0^{(n)}(X_{F'},\Z/l^r)\ar[d]_{\Cor_{F'/F}} \ar[r]^-{f_{\ast}^{KH}} & H_{l^r}^{n+1}(F')\ar[d]^{\Cor_{F'/F}} \\
KH_0^{(n)}(X,\Z/l^r) \ar[r]^-{f_{\ast}^{KH}} & H_{l^r}^{n+1}(F),
}
\]
where $\res_{F'/F}$ is the restriction map. 
The restriction map on the Kato homology groups is induced by the restriction map 
\[
H^{n+1}_{l^r}(F(x)) \to \bigoplus_{\begin{matrix}x' \in (X_{F'})_0\\ 
\text{over $x$}\end{matrix}}H_{l^r}^{n+1}(F'(x')) 
\]
whereas the map $f_\ast^{KH}$ is given by corestriction. 
Hence the commutativity of the left diagram follows from the Mackey formula for
restriction and corestriction (\cite[Chapter I, Proposition~1.5.6]{NSW08}). 
% Indeed, let $x\in X_0$ and put
% $K=F(x)$. Since $F'/F$ is finite separable, the closed points
%The closed points
%x'\in (X_{F'})_0$ lying above $x$ are the points of
% $\Spec(K\otimes_F F')$.
% Equivalently, after fixing a separable closure of $F$, they are
% parametrized by the double cosets
% $G_{F'}\backslash G_F/G_K$.
% By the double coset formula
% \cite[Chapter I, Proposition~1.5.6]{NSW08}, for
% $a\in H_{l^r}^{n+1}(K)$ we have
% \[
%   \res_{F'/F}\Cor_{K/F}(a)
%   =
%   \sum_{x'\mid x}
%   \Cor_{F'(x')/F'}\res_{F'(x')/K}(a).
% \]
% This is precisely the commutativity on the degree $0$ terms of the Kato complexes. 
%Hence the left diagram is commutative.
% thus %as flat pull-back (cf.~\cite[]{KS12}), 
% the left diagram is commutative.
Both compositions
\begin{gather}
	KH_0^{(n)}(X,\Z/l^r) \xrightarrow{\res_{F'/F}} KH_0^{(n)}(X_{F'},\Z/l^r) \xrightarrow{\Cor_{F'/F}} KH_0^{(n)}(X,\Z/l^r), \\
	H_{l^r}^{n+1}(F) \xrightarrow{\res_{F'/F}} H_{l^r}^{n+1}(F') \xrightarrow{\Cor_{F'/F}} H_{l^r}^{n+1}(F)
\end{gather}
are multiplication by $[F':F]$. Since the groups are annihilated by $l^r$ and $[F':F]$ is prime to $l$, these compositions are bijective. In particular, the restriction maps are injective and the corestriction maps are surjective. Therefore, if the top horizontal map $f_*^{KH}$ in the above commutative diagrams is injective (resp.\ surjective), then so is the bottom horizontal map.
\end{proof}

\begin{thm}
\label{thm:KH}
    Suppose that $F$ is a $\fB_s(l)$-field for some $s\ge 1$ with $l \neq \ch(F)$.
    Let $X$ be a smooth projective and geometrically irreducible scheme over $F$. 
    {If $l\nmid \delta(X/F)$,} then 
    \[
	f_{\ast}^{KH}\colon KH_0^{(s-1)}(X,\Z/l^r) \xrightarrow{\simeq} KH_0^{(s-1)}(\Spec(F),\Z/l^r) = H_{l^r}^{s}(F) 
    \]
    is an isomorphism for every $r\ge 1$.
\end{thm}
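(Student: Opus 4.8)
The plan is to follow the template of the finite-field case (\autoref{thm:KH_fintie}): reduce the assertion to a mod-$l$ statement with enough roots of unity present, identify it with the corresponding statement for higher Chow groups via the norm residue isomorphism, and then read off the conclusion from the main theorem. First I would record that the hypotheses $l\neq\ch(F)$ and $\fB_s(l)$ give $\cd_l(F)\le s$ by \autoref{lem:KatRem1}; this is exactly what makes $KH_0^{(s-1)}(\Spec(F),\Z/l^r)$ equal to $H^s_{l^r}(F)$ and supplies the vanishing $H^{s+1}(F(x),\mu_{l^r}^{\otimes(s-1)})=0$ for $x\in X_0$ used inside \autoref{lem:r}. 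By \autoref{lem:r} it then suffices to prove that $f_\ast^{KH}$ is an isomorphism for $r=1$.

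Next I would reduce to the case $\mu_l\subset F$. The extension $F(\mu_l)/F$ has degree dividing $l-1$, hence prime to $l$, so by \autoref{lem:F'} it is enough to establish the statement after base change to $F(\mu_l)$. Here two stability checks are needed: that $F(\mu_l)$ is again a $\fB_s(l)$-field, which holds because $\cd_l(F(\mu_l))\le\cd_l(F)\le s$ (an open subgroup of $G_F$) together with \autoref{lem:KatRem1}; and that $X_{F(\mu_l)}$ remains smooth, projective, and geometrically irreducible, which is immediate since all three properties are preserved under base change.

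With $\mu_l\subset F$ in force, I would invoke the commutative square of \autoref{lem:CHKH} taken with $n=s-1$, whose vertical arrows are bijective (they are built from the norm residue isomorphism). This identifies $f_\ast^{KH}$ modulo $l$ with the map $f_\ast^{CH}\colon CH^{d+s}(X,s)/l\to K_s^M(F)/l$. By \autoref{thm:main} the latter is an isomorphism, so $f_\ast^{KH}\colon KH_0^{(s-1)}(X,\Z/l)\to H_l^s(F)$ is an isomorphism; feeding this back into \autoref{lem:r} promotes it to an isomorphism for every $r\ge 1$, completing the argument.

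The substantive input is entirely contained in \autoref{thm:main}, so within this proof the only points demanding attention are the two reductions—stability of the $\fB_s(l)$ property and of the geometric hypotheses under passage to $F(\mu_l)$, and the bootstrap from $r=1$ to arbitrary $r$ via the snake lemma in \autoref{lem:r}. I do not anticipate a genuine obstacle: in contrast to the finite-field case there is no need for a degree-one zero-cycle to force surjectivity, since the comparison square of \autoref{lem:CHKH} yields injectivity and surjectivity simultaneously from the single isomorphism supplied by \autoref{thm:main}.
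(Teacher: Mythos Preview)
Your proposal is correct and follows essentially the same route as the paper: establish $\cd_l(F)\le s$ via \autoref{lem:KatRem1}, reduce to $r=1$ by \autoref{lem:r}, adjoin $\mu_l$ via \autoref{lem:F'}, and then read off the isomorphism from \autoref{lem:CHKH} combined with \autoref{thm:main}. The paper additionally peels off the finite-field case at the outset (appealing to \autoref{thm:KH_fintie}), but as you implicitly note this is unnecessary since \autoref{thm:main} already covers it; your stability checks for $F(\mu_l)$ are a welcome bit of extra care that the paper leaves implicit.
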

\begin{proof}
% When $F$ is finite, it is $\fB_1$ and the claim follows from 
% \autoref{thm:KH_fintie}. 
% In the following, we assume that $F$ is an infinite $\fB_s$-field for some $s\ge 1$. 

From \autoref{lem:KatRem1}, we have $\cd_l(F) \le s$. 
By \autoref{lem:r}, 
it is sufficient to show the case $r=1$. 
The base change $X \otimes_F F(\mu_l)$ is smooth projective and geometrically irreducible. 
Moreover, since a zero-cycle on $X$ of degree prime to $l$
remains a zero-cycle of degree prime to $l$ after base change, we have
$l\nmid\delta(X_{F(\mu_l)}/F(\mu_l))$.
By \autoref{lem:F'}, 
we may assume that $\mu_l \subset F$. 
By \autoref{lem:CHKH} for $n = s-1$, 
there is a commutative diagram: 
\[
\xymatrix{
 CH^{d+s}(X,s)/l \ar[r]^-{f_{\ast}^{CH}}\ar[d]^{\simeq} & K^M_{s}(F)/l\ar[d]^{\simeq} \\
KH_0^{(s-1)}(X,\Z/l) \ar[r]^-{f_{\ast}^{KH}} & H_l^{s}(F), 
}
\]
where $d = \dim(X)$ and the vertical maps are bijective. 
From \autoref{thm:main}, 
the top horizontal map is bijective. 
The bottom horizontal map is therefore bijective.
\end{proof}
\begin{rem}
    In \autoref{thm:KH}, when $F$ is a finite field, we have $\delta(X/F)=1$ by \cite{Sou}.
    Therefore, any prime $l$ satisfies $l\nmid \delta(X/F)$.
\end{rem}

Concerning the Kato conjecture (\autoref{conj:Kato}), we have the following corollary. 
\begin{cor}[{Kerz-Saito \cite{KS12}}]\label{cor:KH_finite}
Let $X$ be a smooth projective and geometrically irreducible scheme over a finite field $F$ 
with $l \neq \ch(F)$.
Then, for every $r\ge 1$, we have 
$KH_0^{(0)}(X,\Z/l^r) \xrightarrow{\simeq} \Z/l^r$. 
\end{cor}
\begin{proof}
By \autoref{thm:KH}, we have 
$KH_0^{(0)}(X,\Z/l^r) \xrightarrow{\simeq} KH_0^{(0)}(\Spec(F),\Z/l^r) = H_{l^r}^{1}(F) $. 
The target $H_{l^r}^{1}(F) = H^1(F,\Z/l^r)$ 
is isomorphic to $\Z/l^r$.
\end{proof}

 \begin{cor}[{cf.~\cite[Cor.~2.7]{For16}}]
Suppose that $F$ is an $N$-dimensional local field of $\ch(F) = p\ge 0$ 
    with residue field $k$ and $N\ge 1$, and let $l$ be a prime with $l\neq p$.
    Let $X$ be a smooth projective and geometrically irreducible scheme over $F$. 
    Assume that there exists a regular and proper flat model $\sX$ of $X$ over $\cO_F$ 
    and its special fiber $X_k$ is smooth. 
    If $l\nmid \delta(X/F)$, then the residue map 
    \[
        \Delta_0 \colon KH_0^{(N)} (X,\Z/l^r) \xrightarrow{\simeq} KH_0^{(N-1)}(X_k,\Z/l^r)
    \]
is an isomorphism for all $r\ge 1$.
\end{cor}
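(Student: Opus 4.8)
The plan is to deduce the statement from \autoref{thm:KH} applied twice---once to the generic fibre $X$ over $F$ and once to the special fibre $X_k$ over $k$---with the two resulting groups interpolated by the residue isomorphism in Galois cohomology attached to the complete discrete valuation field $F$. Throughout, $l$ is taken invertible in $k$ (equivalently $l\neq \ch(k)$), which is already implicit in the definition of the Kato complexes of $X$ and of $X_k$.

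First I would record the two vertical inputs. Since $F$ is an $N$-dimensional local field, \autoref{prop:Kato} shows $F$ is a $\fB_{N+1}$-field; applying \autoref{thm:KH} with $s=N+1$ yields an isomorphism $f_\ast^{KH}\colon KH_0^{(N)}(X,\Z/l^r)\xrightarrow{\simeq} H_{l^r}^{N+1}(F)$. By definition the residue field $k$ is an $(N-1)$-dimensional local field, hence a $\fB_N$-field, and $X_k$ is smooth projective and geometrically irreducible over $k$ by hypothesis; applying \autoref{thm:KH} with $s=N$ gives $f_\ast^{KH}\colon KH_0^{(N-1)}(X_k,\Z/l^r)\xrightarrow{\simeq} H_{l^r}^{N}(k)$.

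Next I would produce the bottom isomorphism. For the complete discrete valuation field $F$ with residue field $k$ and $l\neq \ch(k)$, the standard residue sequence in Galois cohomology reads
\[
0 \to H^{N+1}(k,\mu_{l^r}^{\otimes N}) \to H^{N+1}(F,\mu_{l^r}^{\otimes N}) \xrightarrow{\partial} H^{N}(k,\mu_{l^r}^{\otimes (N-1)}) \to 0.
\]
Since $k$ is $\fB_N(l)$, \autoref{lem:KatRem1} gives $\cd_l(k)\le N$, so the left-hand term $H^{N+1}(k,\mu_{l^r}^{\otimes N})$ vanishes and $\partial\colon H_{l^r}^{N+1}(F)\xrightarrow{\simeq} H_{l^r}^{N}(k)$ is an isomorphism.

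Finally I would assemble these into the commutative square
\[
\xymatrix{
KH_0^{(N)}(X,\Z/l^r)\ar[r]^-{\Delta_0}\ar[d]_-{f_\ast^{KH}} & KH_0^{(N-1)}(X_k,\Z/l^r)\ar[d]^-{f_\ast^{KH}}\\
H_{l^r}^{N+1}(F)\ar[r]^-{\partial} & H_{l^r}^{N}(k),
}
\]
where the bottom arrow is the residue map $\Delta_0$ for the model $\Spec(\cO_F)$ of $\Spec(F)$, that is, the Galois residue $\partial$ above, and the vertical maps are the two isomorphisms obtained in the previous steps. Commutativity expresses the compatibility of the residue (specialization) map $\Delta_0$ with proper push-forward along the structure morphism $\sX\to\Spec(\cO_F)$, which is part of the functoriality of the Bloch--Ogus/Kato complexes underlying the definition of $\Delta_0$ (cf.~\cite{JS03}, \cite{JS09}). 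Granting this, $\Delta_0$ is a composite of three isomorphisms and is therefore itself an isomorphism. I expect the main obstacle to be exactly this last point: one must unwind the construction of $\Delta_0$ from the regular model $\sX$ and verify that it intertwines the two structure-morphism push-forwards with the Galois-cohomological residue---concretely, that corestriction along $\sX\to\Spec(\cO_F)$ commutes with the boundary maps of the valuation. Everything else is a direct combination of \autoref{prop:Kato}, \autoref{lem:KatRem1}, and \autoref{thm:KH}.
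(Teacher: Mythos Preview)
Your proposal is correct and follows essentially the same approach as the paper: both assemble the commutative square with vertical isomorphisms supplied by \autoref{thm:KH} (applied to $X/F$ with $s=N+1$ and to $X_k/k$ with $s=N$) and then conclude by showing the bottom residue map $H_{l^r}^{N+1}(F)\to H_{l^r}^{N}(k)$ is an isomorphism. The only cosmetic difference is that you justify the bottom isomorphism via the Galois-cohomology residue sequence together with $\cd_l(k)\le N$, while the paper cites Kato's higher local class field theory to identify both groups with $\Z/l^r$.
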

\begin{proof}
The field $F$ is $\fB_{N+1}(l)$ (\autoref{prop:Kato}). 
Consider the following commutative diagram:
\[
\xymatrix{
 KH_0^{(N)} (X,\Z/l^r) \ar[d]_{f_{\ast}^{KH}} \ar[r]^-{\Delta_0} & KH_0^{(N-1)}(X_k,\Z/l^r)\ar[d]^{(f_k)_{\ast}^{KH}}\\ 
 H_{l^r}^{N+1}(F) \ar[r] & H_{l^r}^{N}(k),
 }
\]
where 
$f_k\colon X_k \to \Spec(k)$ is the structure map of the special fiber. 
Since specialization of zero-cycles preserves degrees, we have
$\delta(X_k/k)\mid\delta(X/F)$.
Hence the assumption $l\nmid\delta(X/F)$ implies
$l\nmid\delta(X_k/k)$. When $N=1$, this is automatic since
$k$ is finite and $\delta(X_k/k)=1$. Therefore, both vertical
maps are isomorphisms by \autoref{thm:KH}.
By Kato's higher-dimensional local class field theory, 
we have $H^{N+1}_{l^r}(F) \simeq H^N_{l^r}(k) \simeq \Z/l^r$ (cf.~\cite[Section~1.1, Theorem 3]{Kat80}).
The bottom horizontal map in the above diagram is bijective. 
Therefore, the residue map $\Delta_0$ becomes bijective.
\end{proof}

\begin{cor}[{\cite[Thm.~8.3]{KS12}}]
Let $F$ be a global field of $\ch(F) = p\ge 0$ 
and let $l$ be a prime number with $l\neq p$. 
Let $X$ be a smooth projective and geometrically irreducible scheme over $F$. 
In case $\ch(F) = 0$, we additionally assume that $F$ is totally imaginary or $l\neq 2$. 
If $l\nmid \delta(X/F)$, then, for every $r\ge 1$, there is a short exact sequence 
\[
0\to KH_0^{(1)}(X,\Z/l^r) \to \bigoplus_v KH_0^{(1)}(X_v,\Z/l^r) \to \Z/l^r \to 0, 
\]
where $v$ runs over the set of the places of $F$ and $X_v := X\otimes_F F_v$ for each place $v$. 
\end{cor}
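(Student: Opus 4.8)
The plan is to identify every Kato homology group appearing in the statement with the $l^r$-torsion of a Brauer group, and then deduce the short exact sequence from the Hasse--Brauer--Noether fundamental exact sequence of global class field theory.

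First I would check that $F$ is a $\fB_2(l)$-field. If $\ch(F) = p > 0$ then $\cd_l(F) = 2$ for $l\neq p$ by the discussion following \autoref{lem:global_B2}, and if $\ch(F) = 0$ the hypothesis that $F$ is totally imaginary or $l\neq 2$ gives $\cd_l(F)\le 2$; in either case \autoref{lem:KatRem1} yields $\fB_2(l)$. Applying \autoref{thm:KH} with $s = 2$ together with \eqref{eq:Br} then produces an isomorphism
\[
KH_0^{(1)}(X,\Z/l^r) \xrightarrow{\simeq} H_{l^r}^2(F) \simeq \Br(F)[l^r].
\]

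Next I would treat the completions. For a non-archimedean place $v$, the field $F_v$ is a (one-dimensional) local field, hence $\fB_2$ by \autoref{prop:Kato}, and $X_v = X\otimes_F F_v$ remains smooth projective and geometrically irreducible over $F_v$; so \autoref{thm:KH} and \eqref{eq:Br} give $KH_0^{(1)}(X_v,\Z/l^r)\simeq \Br(F_v)[l^r]$. For an archimedean place $v$ the hypotheses force both sides to vanish: either $F_v = \C$ (all infinite places are complex when $F$ is totally imaginary) so $\cd(\C)=0$, or $F_v = \R$ with $l\neq 2$ so $\cd_l(\R)=0$; in both cases the degree-$0$ term $\bigoplus_{x}H_{l^r}^2(F_v(x))$ of $KC_\bullet^{(1)}(X_v,\Z/l^r)$ vanishes, whence $KH_0^{(1)}(X_v,\Z/l^r) = 0 = \Br(F_v)[l^r]$. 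Since these isomorphisms are natural in the base field, the sequence in the statement is identified with the $l^r$-torsion of
\[
0 \to \Br(F) \to \bigoplus_v \Br(F_v) \xrightarrow{\sum_v \operatorname{inv}_v} \Q/\Z \to 0.
\]
Finally I would apply the snake lemma to multiplication by $l^r$ on this sequence. Because $(-)[l^r]$ commutes with direct sums and $(\Q/\Z)[l^r]\simeq \Z/l^r$, this gives an exact sequence $0 \to \Br(F)[l^r] \to \bigoplus_v \Br(F_v)[l^r] \to \Z/l^r \to \Br(F)/l^r$, and the map onto $\Z/l^r$ is already surjective via the component at any single non-archimedean place $v_0$, where $\operatorname{inv}_{v_0}\colon \Br(F_{v_0})[l^r]\to(\Q/\Z)[l^r]$ is an isomorphism; hence the connecting homomorphism vanishes and the desired short exact sequence follows.

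The main obstacle I anticipate is the \emph{compatibility} of the several isomorphisms supplied by \autoref{thm:KH} with the localization and corestriction maps, so that the Kato-homological sequence really coincides with the $l^r$-torsion of the Hasse--Brauer--Noether sequence rather than merely having isomorphic terms; verifying that the maps defining the complex \eqref{eq:RecCpx} intertwine with the Brauer-group localizations, and confirming that the hypothesis ``$F$ totally imaginary or $l\neq 2$'' is exactly what kills the archimedean contributions, are the two points requiring care.
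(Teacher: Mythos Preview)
Your proposal is correct and follows essentially the same route as the paper: verify that $F$ and each non-archimedean $F_v$ are $\fB_2(l)$, apply \autoref{thm:KH} to identify each $KH_0^{(1)}$ with the corresponding $\Br(-)[l^r]$, kill the archimedean terms under the stated hypotheses, and invoke Hasse--Brauer--Noether. The paper is in fact terser than you are---it simply asserts the $l^r$-torsion short exact sequence for Brauer groups rather than deriving it via the snake lemma, and it does not address the compatibility-of-maps issue you flag as an obstacle.
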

\begin{proof}
As noted in \autoref{sec:Mackey}, 
$F$ is a $\fB_2(l)$-field. 
For any finite place $v$ of $F$, 
the local field $F_v$ is also a $\fB_2(l)$-field (\autoref{prop:Kato}). 
For an infinite place $v$ of $F$ (when $\ch(F) = 0$), 
$KH_0^{(1)}(X_v,\Z/l^r)$ is a quotient of $\bigoplus_{x\in (X_v)_0}H_{l^r}^{2}(F_v(x)) = 0$ 
if $l>2$. 
Let $f_v\colon X_v \to \Spec(F_v)$ be the base change of $f\colon X\to \Spec(F)$ for each place $v$ of $F$.
Since $l\nmid \delta(X/F)$, we also have
$l\nmid \delta(X_v/F_v)$ for every place $v$. Indeed, a zero-cycle
on $X$ of degree prime to $l$ gives, after base change to $F_v$, a zero-cycle on $X_v$ of degree prime to $l$.
From \autoref{thm:KH}, 
 we have isomorphisms 
\begin{align*}
    f_{\ast}^{KH}\colon KH_0^{(1)}(X,\Z/l^r) &\xrightarrow{\simeq} KH_0^{(1)}(\Spec(F),\Z/l^r) \simeq \Br(F)[l^r],\quad \mbox{and}\\
    (f_v)_{\ast}^{KH}\colon KH_0^{(1)}(X_v,\Z/l^r) &\xrightarrow{\simeq }KH_0^{(1)}(\Spec(F_v),\Z/l^r) \simeq \Br(F_v)[l^r] 
\end{align*}
for every finite place $v$ of $F$.
Considering the facts $\Br(\mathbb{C}) = 0$ and $\Br(\R)\simeq \Z/2$, 
the required short exact sequence is nothing other than the Hasse--Brauer--Noether theorem: 
\[
0 \to \Br(F)[l^r] \to \bigoplus_v \Br(F_v)[l^r] \to \Z/l^r \to 0.
\]
\end{proof}

Finally, we discuss the Kato homology groups of a field that is not an arithmetic field.
\begin{ex}
Let $F = \C(\!(t)\!)$ be the field of Laurent series over $\C$. 
The field $F$ is a $\fB_1$-field (\autoref{prop:cdvf}). 
For any prime $l$, we have 
\[
K_1^M(F)/l^r \simeq K_1(\C)/l^r \oplus K_0^M(\C)/l^r = \Z/l^r
\]
(\cite[Chapter III, Corollary 7.3.1]{Wei13}).
For a smooth projective and geometrically irreducible scheme $X$ of dimension $d$ over $F$ with $\delta(X/F) = 1$, 
we have 
$CH^{d+1}(X,1)/l^r \simeq \Z/l^r$ by \autoref{cor:main}. 
% There are only tamely ramified extensions of $F$ 
% so that any finite extension $F'/F$ is of the form $F' = F(t^{1/m})$ for some $m\ge 1$. 
Every finite extension of $F$ is cyclic and is of the form
$\C(\!(t^{1/m})\!)$ for some $m\ge1$. Consequently,
the absolute Galois group of $F$ is procyclic:
% The absolute Galois group $G_F$ is generated by 
% the character $\chi_m:G_F \to \Z/m$ corresponding to the extension $F(t^{1/m})/F$. 
$G_F \simeq \Zhat$. % https://alex-youcis.github.io/localglobalgalois.pdf p. 20
\autoref{thm:KH} gives 
\[
KH_0^{(0)}(X,\Z/l^r) \xrightarrow{\simeq} H^1(F,\Z/l^r) \simeq \Hom_{\mathrm{cont}}(\Zhat,\Z/l^r) \simeq \Z/l^r.
\]
\end{ex}

% %%%%%%%%%%
%\bibliographystyle{amsalpha}
%\bibliography{refs}

\begin{thebibliography}{{Sta}25}

\bibitem[AK79]{AK79}
A.~B. Altman and S.~L. Kleiman, \emph{Bertini theorems for hypersurface sections containing a subscheme}, Comm. Algebra \textbf{7} (1979), no.~8, 775--790.

\bibitem[Akh00]{Akh00}
R.~Akhtar, \emph{Milnor {$K$}-theory and zero-cycles on algebraic varieties},
  thesis available at http://calico.mth.muohio.edu/reza/ (2000).

\bibitem[Akh04a]{Akh04}
R.~Akhtar, \emph{Milnor {$K$}-theory of smooth varieties}, $K$-Theory \textbf{32} (2004), no.~3, 269--291.

\bibitem[Akh04b]{Akh04c}
\bysame, \emph{Zero-cycles on varieties over finite fields}, Comm. Algebra \textbf{32} (2004), no.~1, 279--294.

\bibitem[Blo81]{Blo81}
S.~Bloch, \emph{Algebraic {$K$}-theory and classfield theory for arithmetic surfaces}, Ann. of Math. (2) \textbf{114} (1981), no.~2, 229--265.

\bibitem[BT73]{BT73}
H.~Bass and J.~Tate, \emph{The {M}ilnor ring of a global field}, Algebraic {$K$}-theory, {II}: ``{C}lassical'' algebraic {$K$}-theory and connections with arithmetic ({P}roc. {C}onf., {S}eattle, {W}ash., {B}attelle {M}emorial {I}nst., 1972), Springer, Berlin, 1973, pp.~349--446. Lecture Notes in Math., Vol. 342.

\bibitem[CTI81]{CI81}
J.-L. Colliot-Th\'el\`ene and F.~Ischebeck, \emph{L'\'equivalence rationnelle sur les cycles de dimension z\'ero des vari\'et\'es alg\'ebriques r\'eelles}, C. R. Acad. Sci. Paris S\'er. I Math. \textbf{292} (1981), no.~15, 723--725.

\bibitem[Fes00]{Fes00a}
I.~B. Fesenko, \emph{Topological {M}ilnor {$K$}-groups of higher local fields}, Invitation to higher local fields ({M}{\"u}nster, 1999), Geom. Topol. Monogr., vol.~3, Geom. Topol. Publ., Coventry, 2000, pp.~61--74 (electronic).

\bibitem[For]{For16}
P.~Forr\'e, \emph{Cohomological hasse principle for schemes over valuation rings of higher dimensional local fields}, arXiv:1605.08344.

\bibitem[FV02]{FV02}
I.~B. Fesenko and S.~V. Vostokov, \emph{Local fields and their extensions}, second ed., Translations of Mathematical Monographs, vol. 121, American Mathematical Society, Providence, RI, 2002, With a foreword by I. R. Shafarevich.

\bibitem[GKR]{GKR22}
R.~Gupta, A.~Krishna, and J.~Rathore, \emph{Tame class field theory over local fields}, arXiv:2209.02953.

\bibitem[GR]{GR25}
R.~Gupta and J.~Rathore, \emph{Torsion in abelian fundamental group and its application}, arXiv:2501.0343.

\bibitem[Gro67]{EGAIV}
A.~Grothendieck, \emph{\'{E}l\'ements de g\'eom\'etrie alg\'ebrique. {IV}. \'{E}tude locale des sch\'emas et des morphismes de sch\'emas {IV}}, Inst. Hautes \'Etudes Sci. Publ. Math. (1964-67), no.~20,24,28,32, 361.

\bibitem[Hir24]{Hir24}
T.~Hiranouchi, \emph{An additive variant of the differential symbol maps}, Ann. K-Theory \textbf{9} (2024), no.~3, 499--518.

\bibitem[HS]{HS25}
T. Hiranouchi and R. Sugiyama, \emph{Extended differential symbol and the Kato homology groups}, Eur. J. Math. {\bf 11} (2025), no.~3, Paper No. 53, 36 pp.; MR4940244

\bibitem[IR17]{IR17}
F.~Ivorra and K.~R{\"u}lling, \emph{K-groups of reciprocity functors}, J. Algebraic Geom. \textbf{26} (2017), no.~2, 199--278.

\bibitem[Jan16]{Jan16}
U.~Jannsen, \emph{Hasse principles for higher-dimensional fields}, Ann. of Math. (2) \textbf{183} (2016), no.~1, 1--71.

\bibitem[JS]{JS09}
U.~Jannsen and S.~Saito, \emph{Kato conjecture and motivic cohomology over finite fields}, arXiv:0910.2815 [math.AG].

\bibitem[JS03]{JS03}
\bysame, \emph{Kato homology of arithmetic schemes and higher class field theory over local fields}, Doc. Math. (2003), Extra Vol., 479--538 (electronic), Kazuya Kato's fiftieth birthday.

\bibitem[JSS14]{JSS14}
U.~Jannsen, S.~Saito, and K.~Sato, \emph{\'etale duality for constructible sheaves on arithmetic schemes}, J. Reine Angew. Math. \textbf{688} (2014), 1--65.

\bibitem[Kah92]{Kah92a}
B.~Kahn, \emph{The decomposable part of motivic cohomology and bijectivity of the norm residue homomorphism}, Algebraic {$K$}-theory, commutative algebra, and algebraic geometry ({S}anta {M}argherita {L}igure, 1989), Contemp. Math., vol. 126, Amer. Math. Soc., Providence, RI, 1992, pp.~79--87.

\bibitem[Kat77]{Kat77}
K.~Kato, \emph{A generalization of local class field theory by using {$K$}-groups. {I}}, Proc. Japan Acad. Ser. A Math. Sci. \textbf{53} (1977), no.~4, 140--143.

\bibitem[Kat78]{Kat78}
\bysame, \emph{A generalization of local class field theory by using {$K$}-groups. {II}}, Proc. Japan Acad. Ser. A Math. Sci. \textbf{54} (1978), no.~8, 250--255.

\bibitem[Kat80]{Kat80}
\bysame, \emph{A generalization of local class field theory by using {$K$}-groups. {II}}, J. Fac. Sci. Univ. Tokyo Sect. IA Math. \textbf{27} (1980), no.~3, 603--683.

\bibitem[Kat86a]{Kat86}
\bysame, \emph{A {H}asse principle for two-dimensional global fields}, J. Reine Angew. Math. \textbf{366} (1986), 142--183, With an appendix by Jean-Louis Colliot-Th\'{e}l\`ene.

\bibitem[Kat86b]{Kat86b}
\bysame, \emph{Milnor {$K$}-theory and the {C}how group of zero cycles}, Applications of algebraic {$K$}-theory to algebraic geometry and number theory, {P}art {I}, {II} ({B}oulder, {C}olo., 1983), Contemp. Math., vol.~55, Amer. Math. Soc., Providence, RI, 1986, pp.~241--253.

\bibitem[KK86]{KK86}
K.~Kato and T.~Kuzumaki, \emph{The dimension of fields and algebraic {$K$}-theory}, J. Number Theory \textbf{24} (1986), no.~2, 229--244.

\bibitem[KS83]{KS83b}
K.~Kato and S.~Saito, \emph{Unramified class field theory of arithmetical surfaces}, Ann. of Math. (2) \textbf{118} (1983), no.~2, 241--275.

\bibitem[KY13]{KY13}
B.~Kahn and T.~Yamazaki, \emph{Voevodsky's motives and {W}eil reciprocity}, Duke Math. J. \textbf{162} (2013), no.~14, 2751--2796.

\bibitem[KS12]{KS12}
M.~Kerz and S.~Saito, \emph{Cohomological Hasse principle and motivic cohomology for arithmetic schemes}, Publ. Math. Inst. Hautes \'Etudes Sci. {\bf 115} (2012), 123--183; MR2929729

\bibitem[Liu02]{Liu02}
Q.~Liu, \emph{Algebraic geometry and arithmetic curves}, Oxford Graduate Texts in Mathematics, vol.~6, Oxford University Press, Oxford, 2002, Translated from the French by Reinie Ern\'e, Oxford Science Publications.

\bibitem[Mil82]{Mil82}
J.~S. Milne, \emph{Zero cycles on algebraic varieties in nonzero characteristic: {R}ojtman's theorem}, Compositio Math. \textbf{47} (1982), no.~3, 271--287.

\bibitem[NS89]{NS89}
Y.~P. Nesterenko and A.~A. Suslin, \emph{Homology of the general linear group over a local ring, and {M}ilnor's {$K$}-theory}, Izv. Akad. Nauk SSSR Ser. Mat. \textbf{53} (1989), no.~1, 121--146.

\bibitem[NSW08]{NSW08}
J.~Neukirch, A.~Schmidt, and K.~Wingberg, \emph{Cohomology of number fields}, second ed., Grundlehren der Mathematischen Wissenschaften [Fundamental Principles of Mathematical Sciences], vol. 323, Springer-Verlag, Berlin, 2008.


\bibitem[OVV07]{OVV07}
D.~Orlov, A.~Vishik, and V.~Voevodsky, \emph{An exact sequence for
  {$K^M_\ast/2$} with applications to quadratic forms}, Ann. of Math. (2)
  \textbf{165} (2007), no.~1, 1--13.


\bibitem[Roj80]{Roj80}
A.~A. Rojtman, \emph{The torsion of the group of {$0$}-cycles modulo rational equivalence}, Ann. of Math. (2) \textbf{111} (1980), no.~3, 553--569.

\bibitem[RS00]{RS00}
W.~Raskind and M.~Spiess, \emph{Milnor {$K$}-groups and zero-cycles on products of curves over {$p$}-adic fields}, Compositio Math. \textbf{121} (2000), 1--33.

\bibitem[Sai85]{Sai85a}
S.~Saito, \emph{Class field theory for curves over local fields}, J. Number Theory \textbf{21} (1985), no.~1, 44--80.

\bibitem[Sai10]{Sai10}
\bysame, \emph{Recent progress on the {K}ato conjecture}, Quadratic forms, linear algebraic groups, and cohomology, Dev. Math., vol.~18, Springer, New York, 2010, pp.~109--124.

\bibitem[Ser94]{SerreCG}
J.-P. Serre, \emph{Cohomologie galoisienne}, fifth ed., Lecture Notes in Mathematics, vol.~5, Springer-Verlag, Berlin, 1994. \MR{1324577 (96b:12010)}

\bibitem[Som90]{Som90}
M.~Somekawa, \emph{On {M}ilnor {$K$}-groups attached to semi-abelian varieties}, $K$-Theory \textbf{4} (1990), no.~2, 105--119.

\bibitem[Sou]{Sou}
C.~Soul\'e, \emph{Groupes de Chow et $K$-th\'eorie de vari\'et\'es sur un corps fini}, Math. Ann. {\bf 268} (1984), no.~3, 317--345; MR0751733

\bibitem[{Sta}25]{stacks-project}
The {Stacks project authors}, \emph{The stacks project}, \url{https://stacks.math.columbia.edu}, 2025.

\bibitem[Tot92]{Tot92}
B.~Totaro, \emph{Milnor {$K$}-theory is the simplest part of algebraic {$K$}-theory}, $K$-Theory \textbf{6} (1992), no.~2, 177--189.

\bibitem[Vo11]{Vo11}
V.~Voevodsky, \emph{On motivic cohomology with $\Z/l$-coefficients}, Ann. of Math. (2) {\bf 174} (2011), no.~1, 401--438; MR2811603

\bibitem[Wei05]{Wei05}
C.~Weibel, \emph{Algebraic {$K$}-theory of rings of integers in local and global fields}, Handbook of {$K$}-theory. {V}ol. 1, 2, Springer, Berlin, 2005, pp.~139--190.

\bibitem[Wei09]{Wei09}
\bysame, \emph{The norm residue isomorphism theorem}, J. Topol. \textbf{2} (2009), 346--372.

\bibitem[Wei13]{Wei13}
\bysame, \emph{The {$K$}-book}, Graduate Studies in Mathematics, vol. 145, American Mathematical Society, Providence, RI, 2013, An introduction to algebraic $K$-theory. \MR{3076731}

\end{thebibliography}
\def\cprime{$'$}
\providecommand{\bysame}{\leavevmode\hbox to3em{\hrulefill}\thinspace}

%\newpage
%\setcounter{page}{1}
%\thispagestyle{empty}
%\nolinenumbers
%\input{letter.tex}

\end{document}